\newtheorem{thm}{Theorem}[section]
\newtheorem{prop}[thm]{Proposition}
\newtheorem{lem}[thm]{Lemma}
\newtheorem{cor}[thm]{Corollary}
\newtheorem{asp}[thm]{Assumption}
\newtheorem{defn}[thm]{Definition}
\DeclareMathOperator{\diag}{diag}
\newcommand{\cL}{\mathcal L}
\newcommand{\Lp}{\mathbb L}
\newcommand{\filt}{L} 
\newcommand{\Wb}{\bd{W}}
\newcommand{\U}{\bd{U}}
\newcommand{\Pb}{\bd{P}}
\newcommand{\Qb}{\bd{Q}}
\newcommand{\T}{{\bd{T}}}
\newcommand{\G}{\bd{\mathsf{G}}}
\newcommand{\C}{C}
\newcommand{\DR}{\bd{DF}}
\newcommand{\z}{\bd{z}}
\newcommand{\zh}{\bd{\hat{z}}}
\newcommand{\w}{w}
\newcommand{\Ta}{\bd{T}_{\alpha}}
\newcommand{\zhP}{{{\zh}_P}}
\newcommand{\Uzh}{\bd{U}_{\zhP}}
\newcommand{\DUzh}{\Delta \Uzh}
\newcommand{\Fb}{\bd{F}}
\newcommand{\Ukc}{{U_{P}} }
\newcommand{\Ykc}{Y_{-B_cK_{t_f}} }
\newcommand{\mathleft}{\@fleqntrue\@mathmargin0pt}
\newcommand{\mathcenter}{\@fleqnfalse}
\newcommand{\bd}[1]{\boldsymbol{#1}}
\newenvironment{proof}{\begin{IEEEproof} }{\end{IEEEproof} }
\renewcommand{\ss}{{\mathcal H} }
\renewcommand\bd[1][]{#1}
\title{\LARGE Extended Kalman filter based observer design for semilinear infinite-dimensional systems} 
\author{Sepideh Afshar, Fabian Germ, Kirsten Morris 
\thanks{Dep. of Radiology, Harvard Medical School, Massachusetts General Hospital, Boston, MA, USA (SA), School of Mathematics and Maxwell Institute for Mathematical Sciences, University of Edinburgh, Edinburgh, UK, (FG), Dept. of Applied Mathematics, University of Waterloo, Waterloo, ON, Canada (KM)
        {\tt\small  safshar1@mgh.harvard.edu}, {\tt\small  f.germ@ed.ac.uk},  {\tt\small kmorris@uwaterloo.ca}  }%
\thanks{Financial  support of  Natural Sciences and Engineering Research Council of Canada (NSERC) and of the  U.S. AFOSR under Grant FA9550-16-1-0061 for this research is gratefully acknowledged.  We also  thank Shuxia Tang for her comments on early version of this paper.}
}
\begin{document}

\thispagestyle{empty}
\maketitle

\begin{abstract}

In many physical applications, the system's state varies with spatial variables as well as time. The state of such systems is modelled by partial differential equations and evolves on an infinite-dimensional space. Systems modelled by delay-differential equations are also infinite-dimensional systems.
The full state of these systems cannot be measured. 
 Observer design is an important tool for estimating the state from available measurements. For linear systems, both finite- and infinite-dimensional,  the Kalman filter provides an estimate with minimum-variance on the error, if certain assumptions on the noise are satisfied. The extended Kalman filter  (EKF) is one type of  extension  to nonlinear finite-dimensional systems.
In this paper we provide  an extension of the EKF  to semilinear infinite-dimensional systems. Under mild assumptions we prove the well-posedness  of equations defining the  EKF. Local exponential stability of the error dynamics is shown. Only detectability is assumed, not observability, so this result is new even for finite-dimensional systems.  The results are illustrated with implementation of finite-dimensional approximations of the infinite-dimensional EKF on an example. 
\end{abstract}

\section{Introduction}
In many physical applications, the system's state varies with spatial variables as well as time. The state of such systems is modelled by partial differential equations and evolves on an infinite-dimensional space, and so they are an important class of infinite-dimensional systems. Systems modelled by delay-differential equations are also infinite-dimensional systems.
The full state of these systems cannot be measured. 
 As for finite-dimensional systems, a system, referred to as an observer or estimator,  can  be designed to estimate the state using the  mathematical model and the measurements provided by sensors.

For linear systems, the Kalman filter (KF)   minimizes the variance of the error under certain assumptions on the disturbances.  The observer can be calculated through solution of a Riccati equation. The Kalman filter is widely used and was  extended to infinite-dimensional linear systems in the 1970's;  see  the review papers \cite{Curtain-SIAM-review} and \cite{Bensoussan-article-2003}. This theory was recently extended to time-varying infinite-dimensional systems \cite{WuJacob2015}.
For linear infinite-dimensional systems, there are a number of  other different approaches to observer design in addition to the Kalman filter, including backstepping
 \cite{OB4-2005},  sliding mode combined with backstepping \cite{MirandaMoreno}. Some other approaches can be found in  \cite{OBDPS1-1982,OBDPS4-2004,Sh4b1,Sh4b2}.

Due to its success in a wide range of applications, an extension of the KF to nonlinear systems, the extended Kalman filter (EKF), was developed for finite-dimensional systems. The EKF design is based on  a linear approximation of the system around the estimated state.  The linearized system is used to derive a Riccati equation and this is used to calculate the observer gain; e.g. \cite{simon2006,KF1-2011}.  This method is widely used; see for example, \cite{stab1,1999_reif_stability_ekf,2000_reif_stability_ekf, R1-2010,R2-2011,R3-1999}. 
However, although this method may work well, it is well known that it may lead to divergent error estimates.
Convergence of the estimation error for EKF depends on the size of the nonlinearity and the initial condition, see for instance, \cite{Reif-Liang1983,ribeiro2004}. 

Local asymptotic convergence of the estimation error under an observability assumption  of the nonlinear system has been shown  in \cite{elizabeth2015,alonge2014}. In  \cite{boutayeb1997},  conditions for asymptotic convergence  are imposed on the linearization residues. Under an uniform observability condition it is shown in \cite{1999_reif_stability_ekf,2000_reif_stability_ekf}, that the estimation error is bounded in presence of  disturbances. Local exponential convergence of  the estimation error under  uniform controllability and  detectability conditions is shown in \cite{baras1988}. Local exponential convergence of the error is proven in \cite{stab2}  under certain assumptions that imply observability; similar results for discrete time are in \cite{1999_reif_stability_ekf}. In \cite{AhrensKhalil2007} better convergence was obtained by using the normal form of the governing ordinary differential equations.  Local asymptotic convergence of the estimation error with an assumption of observability  of the nonlinear system was subsequently shown in  \cite{elizabeth2015,alonge2014}.

Observers  for  nonlinear infinite-dimensional systems  are often designed using a finite-dimensional approximation of the system. This enables the use of techniques  for nonlinear finite-dimensional systems. Some examples are the robust fuzzy  and also robust adaptive observers in \cite{OBDPS6-2008} and \cite{OB7-2011}.  In \cite{AMKestn-paper} the effect of approximation on observer performance for several different types of diffusion models and different observer designs was studied. The EKF has been used on finite-dimensional approximations of PDEs; for example a highway traffic model in \cite{Highway_EKF} and   state-of-charge estimation in lithium-ion batteries \cite{2018_morris_EKF_lithium}.

There are some studies for nonlinear infinite-dimensional systems  where the observer is designed  directly using the infinite-dimensional system equations. In \cite{OB6-2012}, a second-order sliding mode observer is employed to provide stability with the assumption that the measurement is available everywhere. 
In \cite{OB3-2002}, the observer dynamics are corrected by a linear output error injection term via an estimated spatially distributed measurement. Spatially-distributed linear output injection is also proposed in \cite{OB26-2005} for a one-dimensional nonlinear Burgers' equation. Backstepping is used in \cite{Sh4b1} to design an observer for a lithium-ion battery model using the PDE model directly. 
An example of a general and abstract form of late lumping nonlinear observer design is introduced in \cite{OB28-2003} on reflexive Banach spaces, where  a nonlinear feedback operator is added to a copy of the system's dynamics.
Other examples of observer design  for specific nonlinear PDEs can  be found in  \cite{OB20-2008,OB25-2013,OB7-2013,OB28-2014}

In this paper, the EKF is formally shown to be well-posed for a class of semilinear infinite-dimensional systems with bounded observation. As for a finite-dimensional EKF, the observer dynamics are a copy of the original system's dynamics with an injection gain defined by the solution of  an Riccati equation.
Since the Riccati equation is coupled with the observer equation, conventional results in the literature including \cite{CRiccati-1976} for existence of solutions to the Riccati equation cannot be directly used. This is due to the fact that for linear equations the Riccati equation  does not depend on the state of the system. In our, nonlinear, case, such a dependence still remains after linearizing the system, making the analysis more involved.    The  proof of well-posedness was done in \cite{Fabian} for  nonlinearities without time dependence, and briefly sketched in the conference paper  \cite{CDC2020}. A  complete proof with a slightly different presentation and considering time-dependent nonlinearities is provided in this paper,
and a more complex example is presented than in \cite{CDC2020}.

It is also shown that for sufficiently small initial error, and smooth nonlinearity, the error dynamics are exponentially  stable.  The approach for finite-dimensional systems cannot be used here because the analogue of the  observability assumption would be uniformly exact observability, which is extremely restrictive  for infinite-dimensional systems. Local exponential stability of the error  dynamics is shown with much weaker assumptions of uniform stabilizability/detectabilty. Thus, these results are new even for finite-dimensional systems. The estimation error  bounded in presence of disturbances. Although an EKF is generally implemented in discrete time, the  analysis throughout is for a continuous time observer in order to remove the effects of time discretization from those of linearization. 

For implementation, the infinite-dimensional EKF must be approximated using  some method.  The paper concludes with illustration of implementation of  this approach for estimation of concentration in a magnetic drug delivery system. 

\section*{Notation}
Throughout the paper,  calligraphic $\mathcal{H}$, with or without indices,  will denote Hilbert spaces.
 Where the space considered is not clear from the context, the norm $\|\cdot\|$, as well as the scalar product $(\cdot,\cdot)$  will be equipped with an appropriate subscript, i.e. $\|\cdot\|_{\mathcal{H}}$ or $(\cdot,\cdot)_{\mathcal{H}}$; otherwise it is omitted. 
 For an arbitrary, but henceforth fixed $t_f>0$ we work on the time interval $[0,t_f]$. For $0<p<\infty$ let $\Lp^p ([0,t_f],\mathcal{H})$ and $\mathbb{W}^{1,p}([0,t_f],\mathcal{H})$ denote the spaces of functions $\bd{f}:[0,t_f]\rightarrow\mathcal{H}$ such that
 $$
 \int_0^{t_f} \|\bd{f}(t)\|^p_{\mathcal{H}}\,ds<\infty\quad\text{and}\quad \int_0^t \|\bd{f}(t)\|^p_{\mathcal{H}}+\|\frac{d}{dt}\bd{f}(t)\|^p_{\mathcal{H}}\,ds< \infty
 $$
respectively. By $\mathcal{H}_2\hookrightarrow\mathcal{H}_1$ we denote a continuous and dense embedding and the trace space of $\Lp^{p}([0,t_f];\mathcal{H}_2)\cap \mathbb{W}^{p}([0,t_f];\mathcal{H}_1)$ is denoted by $\mathbb{H}_{p,1/p}.$
For integers $k\geq 0$ we denote by $\mathcal{C}^k ([0,t_f],\mathcal{H})$ the space of functions $f:[0,t_f]:\rightarrow\mathcal{H}$ that are $k$ times continuously (with respect to the norm in $\mathcal{H}$) differentiable. Whenever $\mathcal{H}=\mathbb{R}$ we may omit the space and simply write $\Lp([0,t_f]),\mathbb{W}^{1,p}([0,t_f])$ and $\mathcal{C}^k([0,t_f])$ for the spaces above. For a linear operator $\bd{A}:\mathcal{H}_1\rightarrow\mathcal{H}_2$ we write $\mathcal{D}(\bd{A})\subset\mathcal{H}_1$ for its domain and denote by 
$$
\|\bd{A}\|=\sup_{\bd{v}\in\mathcal{H}_1:\|\bd{v}\|\leq 1}\|\bd{A}\bd{v}\|_{\mathcal{H}_2}
$$ 
the usual operator norm. For nonlinear operators $\bd{F}:\mathcal{H}_1\rightarrow\mathcal{H}_2$ we use brackets $\bd{F}(\bd{v})$ whenever we write $\bd{F}$ acting on $\bd{v}\in\mathcal{H}_1$. Lastly, by $\mathcal{L}(\mathcal{H}_1,\mathcal{H}_2)$ we mean the space of linear and bounded operators mapping from $\mathcal{H}_1$ to $\mathcal{H}_2$, where we use the convention $\mathcal{L}(\mathcal{H})=\mathcal{L}(\mathcal{H},\mathcal{H}).$

\section{Preliminaries and statement of observer design problem}

Let $\bd{A}:\mathcal{D}(\bd{A})\rightarrow\mathcal{H}$ be a linear operator that generates a $C_0$-semigroup $\bd{T}(t)$ on $\mathcal{H}$ and $\bd{F}:\mathcal{H}\times [0,t_f]\rightarrow\mathcal{H}$ be strongly continuous in time and nonlinear on $\mathcal{H}$ satisfying $\bd{F}(0,t)=0$ for every $t$. 

We consider the semilinear evolution system

\begin{equation}
\label{Sys1}
\begin{aligned}
\frac{\partial \bd{z}(t)}{\partial t}&=\bd{A}\bd{z}(t)+\bd{F}(\bd{z}(t),t)+\bd{Bu}(t) + \bd{G\omega}(t),\\
\bd{z}(0)&=\bd{z}_0\in\mathcal{H},
\end{aligned}
\end{equation}
where $\bd{u}(t)\in \mathcal{C}([0,t_f],\mathcal{H}_1)$ is the control input, $\bd{\omega}(t)\in\mathcal{C}([0,t_f],\mathcal{H}_2)$ is the input disturbance and $\bd{B}\in\mathcal{L}(\mathcal{H}_1,\mathcal{H})$, $\bd{G}\in\mathcal{L}(\mathcal{H}_2,\mathcal{H})$.
We refer to $\bd{z}(t)$ as the state of the system \eqref{Sys1}.\newline

We impose the following regularity on $\bd{F}$, which is henceforth assumed to hold throughout the paper.

\begin{asp}
\label{asp Frechet}
The operator $\bd{F}$ admits a Fr\'{e}chet-derivative $\bd{DF}(\cdot,\cdot)$ that is globally bounded (in operator norm) as well as locally Lipschitz, uniformly in time. More precisely, there exists a constant $\delta_{DF} >0$ such that $\|\bd{DF}(x,t)\|\leq \delta_{DF}$ for all $(x,t)\in\mathcal{H}\times [0,t_f]$, and for every $\delta>0$ there exists a Lipschitz constant $\iota_{DF}>0$ such that for all $\|\bd{x}-\bd{y}\|<\delta$ and all $t\in [0,t_f]$,
$$
\|\bd{DF}(\bd{x},t)-\bd{DF}(\bd{y},t)\|\leq\iota_{DF}\|\bd{x}-\bd{y}\|.
$$
\end{asp}
Though the following is a well known consequence, it is provided as proposition for the sake of completeness.
\begin{prop}
The operator $\bd{F}$ is globally Lipschitz, uniformly in time, meaning that there exists $M>0$ such that for all $\bd{x},\bd{y}\in\mathcal{H}$ and $t\in [0,t_f]$,
$$
\|\bd{F}(\bd{x},t)-\bd{F}(\bd{y},t)\|\leq M \|\bd{x}-\bd{y}\|.
$$
\end{prop}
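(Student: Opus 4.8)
The plan is to derive the Lipschitz estimate by integrating the Fr\'{e}chet derivative along the straight segment joining the two points and invoking the global operator-norm bound from Assumption~\ref{asp Frechet}; in fact the constant can be taken to be $M=\delta_{DF}$, and the hypothesis $\bd{F}(0,t)=0$ will not even be needed for this particular statement.

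Fix $\bd{x},\bd{y}\in\mathcal{H}$ and $t\in[0,t_f]$, and introduce the auxiliary curve $\varphi:[0,1]\to\mathcal{H}$, $\varphi(s)=\bd{F}(\bd{y}+s(\bd{x}-\bd{y}),t)$. By the chain rule for Fr\'{e}chet derivatives, $\varphi$ is differentiable on $[0,1]$ with $\varphi'(s)=\bd{DF}(\bd{y}+s(\bd{x}-\bd{y}),t)(\bd{x}-\bd{y})$; since the segment $\{\bd{y}+s(\bd{x}-\bd{y}):s\in[0,1]\}$ is bounded and $\bd{DF}(\cdot,t)$ is locally Lipschitz (again by Assumption~\ref{asp Frechet}), $\varphi'$ is continuous, so $\varphi\in\mathcal{C}^1([0,1],\mathcal{H})$. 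The fundamental theorem of calculus for Hilbert-space-valued maps then yields
\[
\bd{F}(\bd{x},t)-\bd{F}(\bd{y},t)=\varphi(1)-\varphi(0)=\int_0^1 \bd{DF}(\bd{y}+s(\bd{x}-\bd{y}),t)(\bd{x}-\bd{y})\,ds .
\]

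Taking norms, moving the norm inside the integral, and using the uniform bound $\|\bd{DF}(\cdot,\cdot)\|\le\delta_{DF}$ on $\mathcal{H}\times[0,t_f]$ gives
\[
\|\bd{F}(\bd{x},t)-\bd{F}(\bd{y},t)\|\le\int_0^1\|\bd{DF}(\bd{y}+s(\bd{x}-\bd{y}),t)\|\,\|\bd{x}-\bd{y}\|\,ds\le\delta_{DF}\,\|\bd{x}-\bd{y}\| ,
\]
which is the asserted estimate with $M=\delta_{DF}$; the bound is independent of $t$ precisely because the bound on $\bd{DF}$ in Assumption~\ref{asp Frechet} is uniform in time.

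I do not anticipate any real obstacle here. The only point deserving a word of care is the justification of the fundamental theorem of calculus in the infinite-dimensional setting, which is standard once $\varphi$ has been identified as $\mathcal{C}^1$, and can in any case be replaced by the elementary mean value inequality $\|\varphi(1)-\varphi(0)\|\le\sup_{s\in[0,1]}\|\varphi'(s)\|$, avoiding Bochner-integration technicalities altogether. I would also note in passing that combining the same computation with $\bd{F}(0,t)=0$ (taking $\bd{y}=0$) produces the linear growth bound $\|\bd{F}(\bd{x},t)\|\le\delta_{DF}\|\bd{x}\|$, which is the form actually invoked in later well-posedness arguments.
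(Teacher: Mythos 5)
Your argument is correct and is essentially the paper's own proof: the paper also fixes $t$, applies the Mean Value Theorem along the segment joining $\bd{x}$ and $\bd{y}$, and bounds the derivative by the global constant $\delta_{DF}$ from Assumption \ref{asp Frechet}, obtaining $M=\delta_{DF}$. Your explicit fundamental-theorem-of-calculus detour (or the mean value inequality you mention as an alternative) is just a self-contained rendering of the same step the paper delegates to a cited textbook result.
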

\begin{proof}
Fix $t\in [0,t_f]$. By the continuity of $\bd{DF}(\cdot,t)$ on $\mathcal{H}$, the Mean Value Theorem \cite[Thm. 5.1.12]{nonlinear_analysis} yields that for all $\bd{x},\bd{y}\in\mathcal{H}$,
\begin{align}
\|\bd{F}(\bd{x},t)-\bd{F}(\bd{y},t)\|&\leq\sup_{\theta\in [0,1]}\|\bd{DF}(\bd{x}+\theta \bd{y},t)\|\|\bd{x}-\bd{y}\|\nonumber\\
&\leq\delta_{DF}\|\bd{x}-\bd{y}\|,
\end{align}
whereby our proposition holds with $M=\delta_{DF}$.
\end{proof}

For convenience, the disturbance $\bd{\omega}$ and control $\bd{u}$  may be lumped as a single input, 
\begin{equation*}
\bd{B}_d=[\bd{B},\bd{G}],\quad \bd{u}_d^T(t)=[\bd{u}^T(t),\bd{\omega}^T(t)].
\end{equation*}
The state-equation for $\bd{z}$ in system \eqref{Sys1} can then be written in the general form 
\begin{equation}
\label{Sys1g}
\begin{aligned}
\frac{\partial \bd{z}(t)}{\partial t}&=\bd{A}\bd{z}(t)+\bd{F}(\bd{z}(t),t)+\bd{B}_d\bd{u}_d(t),\\
z(0)&=z_0\in\mathcal{H}.
\end{aligned}
\end{equation}

It is useful to establish in what sense the systems considered in this paper admit solutions. \newline
Recall that $\bd{T}(t)$ is the $C_0$-semigroup generated by $\bd{A}$.

\begin{defn}
We say $\bd{z}(t)\in\mathcal{C}([0,t_f],\mathcal{H})$ is a mild solution of \eqref{Sys1g} if for $t\in[0,t_f]$ it satisfies the integral equation
\begin{equation}
\label{mild solution}
\bd{z}(t)=\bd{T}(t)\bd{z}_0+\int_0^t\bd{T}(t-s)(\bd{F}(\bd{z}(s),s)+\bd{B}_d\bd{u}(s))\,ds.
\end{equation}
\end{defn}

It is worth noting that if $\bd{z}(t)$ is a classical, that is, continuously differentiable solution to \eqref{Sys1g}, then it clearly satisfies \eqref{mild solution}. However, to obtain a classical solution, one has to at least impose Lipschitz continuity in time on $\bd{F}$. Such conditions are often not met in applications. The systems considered in this paper are of a more general form, and the following result, ensures the existence of their mild solutions.  For the proof of the following result we refer to \cite[Thm. 6.1.12]{pazy}.

\begin{thm}
\label{Thm Pazy}
Consider a system of the form \eqref{Sys1g}, where $\bd{A}$ generates the $C_0$-semigroup $\bd{T}(t)$ on $\mathcal{H}$, the nonlinearity $\bd{F}(x,t)$ satisfies Assumption \ref{asp Frechet} and $\bd{u}(t)\in\mathcal{C}([0,t_f])$. Then \eqref{Sys1g} has a unique mild solution $\bd{z}(t)\in\mathcal{C}([0,t_f],\mathcal{H})$ given by formula \eqref{mild solution}.
\end{thm}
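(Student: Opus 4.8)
The plan is to obtain the mild solution as the unique fixed point of the Picard-type operator
\[
(\Phi\bd{z})(t)=\bd{T}(t)\bd{z}_0+\int_0^t\bd{T}(t-s)\bigl(\bd{F}(\bd{z}(s),s)+\bd{B}_d\bd{u}_d(s)\bigr)\,ds,\qquad t\in[0,t_f],
\]
acting on the Banach space $\mathcal{C}([0,t_f],\mathcal{H})$ equipped with the supremum norm. First I would check that $\Phi$ maps $\mathcal{C}([0,t_f],\mathcal{H})$ into itself. For a fixed continuous trajectory $\bd{z}$, the composition $s\mapsto\bd{F}(\bd{z}(s),s)$ is continuous, since $\bd{F}$ is strongly continuous in time and, by the Proposition above, globally Lipschitz in the state uniformly in $t$, hence jointly continuous along the curve $s\mapsto(\bd{z}(s),s)$. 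Adding the continuous term $\bd{B}_d\bd{u}_d(s)$ and convolving with the strongly continuous semigroup then yields a continuous, hence Bochner-integrable, $\mathcal{H}$-valued integrand, and a routine estimate (splitting $\int_0^t-\int_0^{t'}$ and using that $\bar M:=\sup_{t\in[0,t_f]}\|\bd{T}(t)\|<\infty$) shows that $t\mapsto(\Phi\bd{z})(t)$ is continuous.

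Next I would establish the contraction estimate. Writing $M$ for the global Lipschitz constant of $\bd{F}$ furnished by the Proposition, linearity of the integral together with $\|\bd{T}(t-s)\|\le\bar M$ gives, for $\bd{z}_1,\bd{z}_2\in\mathcal{C}([0,t_f],\mathcal{H})$,
\[
\|(\Phi\bd{z}_1)(t)-(\Phi\bd{z}_2)(t)\|\le\bar M M\int_0^t\|\bd{z}_1(s)-\bd{z}_2(s)\|\,ds .
\]
Iterating this inequality $n$ times produces the factorially decaying bound
\[
\|(\Phi^n\bd{z}_1)(t)-(\Phi^n\bd{z}_2)(t)\|\le\frac{(\bar M M t_f)^n}{n!}\,\|\bd{z}_1-\bd{z}_2\|_{\mathcal{C}([0,t_f],\mathcal{H})},
\]
so that $\Phi^n$ is a strict contraction once $n$ is large enough; equivalently, one may pass to the equivalent weighted norm $\|\bd{z}\|_\lambda=\sup_{t\in[0,t_f]}e^{-\lambda t}\|\bd{z}(t)\|$ with $\lambda>\bar M M$ and make $\Phi$ itself a contraction. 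The Banach fixed point theorem (in the version valid when an iterate is a contraction) then yields a unique $\bd{z}\in\mathcal{C}([0,t_f],\mathcal{H})$ with $\Phi\bd{z}=\bd{z}$, which is by definition the unique mild solution satisfying \eqref{mild solution}.

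The only step demanding genuine care is the well-definedness of $\Phi$: one has to be certain that the nonlinear term, which is merely strongly continuous in time, still yields a Bochner-integrable (indeed continuous) integrand after composition with a continuous state curve and convolution with $\bd{T}$, and this is precisely where the uniform-in-time global Lipschitz property from the Proposition is used. Once that is settled, global existence on all of $[0,t_f]$ is immediate, with no need for a maximal-interval/continuation argument, because the Lipschitz constant is global and the iterated estimate is a contraction on the whole interval at once. This is exactly the content of \cite[Thm.~6.1.12]{pazy}, which is why we simply invoke that reference rather than reproduce the details.
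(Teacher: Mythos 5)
Your proposal is correct: the Picard operator on $\mathcal{C}([0,t_f],\mathcal{H})$, the continuity of $s\mapsto\bd{F}(\bd{z}(s),s)$ via the uniform-in-time global Lipschitz bound from the Proposition together with strong continuity in $t$, and the iterated (or weighted-norm) contraction estimate constitute exactly the standard argument behind the result the paper invokes. The paper itself offers no proof beyond the citation to \cite{pazy}, so your write-up is essentially the same approach, just spelled out.
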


Let the system measurement be 
$$\bd{y}(t)=\bd{C}\bd{z}(t)+\bd{\eta}(t)$$
where $\bd{\eta}(t)\in \mathcal{C}([0,t_f],\mathbb{R}^p)$, $p\geq 1$, is the output disturbance, and $\bd{C}\in\mathcal{L}(\mathcal{H},\mathbb{R}^p)$.

Our objective is to design an observer for the system \eqref{Sys1g}.
Most generally, an observer is a dynamical system with state $\hat{\bd{z}} (t)$ such that, in the absence of disturbances,
$$ \lim_{t \to \infty} \|\bd{z}(t)-\hat{\bd{z}}  (t)\| =0 . $$
In this paper,  as is common, the observer dynamics contain a copy of the system's dynamics and a feedback term that corrects for the error between the predicted observation,  $\bd{C}\hat{\bd{z}}$, and the actual observation, $\bd{y}.$ The general form of the  observer is 
\begin{equation}
\label{OBD}
\begin{aligned}
\frac{\partial \hat{\bd{z}}(t)}{\partial t}&=\bd{A}\hat{\bd{z}}(t)+\bd{F}(\hat{\bd{z}}(t),t)+\bd{Bu}(t)
+\filt (t)[\bd{y}(t)-\bd{C}\hat{\bd{z}}(t)]\\
\hat{\bd{z}}(0) &=\hat{\bd{z}}_0\in\mathcal{H},
\end{aligned}
\end{equation}
where $\filt (t)$, referred to as observer gain, needs to be selected  so that  in the absence of disturbances $\bd{\omega}(t)$ and $\bd{\eta}(t)$, $ \hat{\bd{z}} (t) \to \bd{z}(t).$ 
The following proposition follows from Theorem \ref{Thm Pazy} and ensures the existence of a mild solution $\hat{\bd{z}}$ to \eqref{OBD} if $\bd{K}$ is strongly continuous.
\begin{prop}
\label{WPness}
Let the assumptions of Theorem \ref{Thm Pazy} hold and let again $\bd{z}(t)$ be the mild solution to \eqref{Sys1g}. Let moreover $\bd{L}\in\mathcal{C}([0,t_f],\mathcal{L}(\mathbb{R}^p,\mathcal{H}))$. Then there exists a unique mild solution $\hat{\bd{z}}(t)$ to \eqref{OBD} given by
\begin{equation*}
\hat{\bd{z}}(t)=\bd{T}(t)\hat{z}_0 + \int_0^t\bd{T}(t-s)[\bd{F}(\hat{\bd{z}}(s),s)+\bd{Bu}(s) + \filt (s)(\bd{y}(s)-\hat{\bd{z}}(s))]\,ds.
\end{equation*}
\end{prop}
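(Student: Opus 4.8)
The plan is to rewrite the observer equation \eqref{OBD} in the form of \eqref{Sys1g} and then invoke Theorem \ref{Thm Pazy} directly. The first step is to observe that the measurement signal is continuous in time: since $\bd{z}$ is by hypothesis the mild solution of \eqref{Sys1g}, we have $\bd{z}\in\mathcal{C}([0,t_f],\mathcal{H})$, and because $\bd{C}$ is bounded and $\bd{\eta}$ is continuous, $\bd{y}(t)=\bd{C}\bd{z}(t)+\bd{\eta}(t)$ defines an element of $\mathcal{C}([0,t_f],\mathbb{R}^p)$. Hence $\bd{g}(t):=\bd{Bu}(t)+\bd{L}(t)\bd{y}(t)$ is a continuous $\mathcal{H}$-valued function on $[0,t_f]$: the first summand is continuous since $\bd{B}$ is bounded and $\bd{u}$ continuous, and the second is continuous as the pointwise action of the norm-continuous family $\bd{L}(\cdot)$ on the continuous function $\bd{y}(\cdot)$. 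With $\bd{g}$ playing the role of the inhomogeneous input term in \eqref{Sys1g}, the observer equation \eqref{OBD} becomes $\partial_t\hat{\bd{z}}=\bd{A}\hat{\bd{z}}+\hat{\bd{F}}(\hat{\bd{z}},t)+\bd{g}(t)$, $\hat{\bd{z}}(0)=\hat{\bd{z}}_0$, where
\[
\hat{\bd{F}}(\bd{v},t):=\bd{F}(\bd{v},t)-\bd{L}(t)\bd{C}\bd{v}.
\]

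The second step is to check that $\hat{\bd{F}}$ meets the hypotheses of Theorem \ref{Thm Pazy}, i.e. Assumption \ref{asp Frechet} together with strong continuity in time and $\hat{\bd{F}}(0,t)=0$. Since for each $t$ the map $\bd{v}\mapsto\bd{L}(t)\bd{C}\bd{v}$ is linear and bounded, $\hat{\bd{F}}(\cdot,t)$ is Fr\'{e}chet differentiable and its derivative differs from $\bd{DF}(\cdot,t)$ only by the constant-in-$\bd{v}$ operator $\bd{L}(t)\bd{C}$; consequently it is globally bounded, with bound $\delta_{DF}+\big(\sup_{t\in[0,t_f]}\|\bd{L}(t)\|\big)\|\bd{C}\|<\infty$, the supremum being finite by continuity of $\bd{L}$ on the compact interval $[0,t_f]$. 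In any difference of derivatives the bounded part cancels, so the local Lipschitz bound of $\bd{DF}$ (uniform in time) carries over verbatim to $\hat{\bd{F}}$. Strong continuity in time follows from that of $\bd{F}$ and the norm-continuity of $\bd{L}(\cdot)$, and $\hat{\bd{F}}(0,t)=\bd{F}(0,t)-\bd{L}(t)\bd{C}\,0=0$.

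Finally, Theorem \ref{Thm Pazy} applied to $\partial_t\hat{\bd{z}}=\bd{A}\hat{\bd{z}}+\hat{\bd{F}}(\hat{\bd{z}},t)+\bd{g}(t)$ yields a unique mild solution $\hat{\bd{z}}\in\mathcal{C}([0,t_f],\mathcal{H})$ represented by $\hat{\bd{z}}(t)=\bd{T}(t)\hat{\bd{z}}_0+\int_0^t\bd{T}(t-s)[\hat{\bd{F}}(\hat{\bd{z}}(s),s)+\bd{g}(s)]\,ds$; substituting the definitions of $\hat{\bd{F}}$ and $\bd{g}$ and regrouping $-\bd{L}(s)\bd{C}\hat{\bd{z}}(s)+\bd{L}(s)\bd{y}(s)=\bd{L}(s)(\bd{y}(s)-\bd{C}\hat{\bd{z}}(s))$ reproduces exactly the claimed integral formula. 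I do not expect a genuine difficulty here: the argument is essentially a reduction to the already-established Theorem \ref{Thm Pazy}, and the only points needing care are the (easy) verification that absorbing the bounded output-injection term into the nonlinearity preserves Assumption \ref{asp Frechet}, and the observation — which is where the hypothesis that $\bd{z}$ is a mild solution enters — that the measurement $\bd{y}$, and hence the forcing $\bd{g}$, is continuous in time.
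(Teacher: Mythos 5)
Your proposal is correct and follows essentially the same route as the paper: note that $\bd{y}\in\mathcal{C}([0,t_f],\mathbb{R}^p)$ because $\bd{z}$ is a (continuous) mild solution, absorb the output-injection term into a modified nonlinearity still satisfying Assumption \ref{asp Frechet}, and invoke Theorem \ref{Thm Pazy}. The only cosmetic difference is that you place the state-independent part $\filt(t)\bd{y}(t)$ in the forcing term rather than inside the modified nonlinearity, as the paper does, which changes nothing essential.
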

\begin{proof}
It suffices to note that since $\bd{\eta}(t)\in\mathcal{C}([0,t_f],\mathbb{R}^p)$, also $\bd{y}(t)=\bd{Cz}(t)+\bd{\eta}(t) \in \mathcal{C}([0,t_f],\mathbb{R}^p)$ and hence the nonlinear operator
\begin{equation}
\tilde{\bd{F}}(x,t)=\bd{F}(x,t)+\filt (t)[\bd{y}(t)-\bd{Cx}]
\end{equation}
 satisfies Assumption \ref{asp Frechet}. 
\end{proof}


The following generalization of semigroups is useful for time-varying systems.
\begin{defn}
Let $\Delta(t_f):=\{(t,s):0\leq s \leq t\leq t_f \}$. A mapping $\bd{U}(t,s):\Delta(t_f)\mapsto\mathcal{L}(\mathcal{H})$ is an evolution operator, if
\begin{enumerate}
\item[(i)] $\bd{U}(t,t)= \bd{I}$, $t\in\Delta(t_f)$, where $I$ denotes the identity operator,
\item[(ii)] $\bd{U}(t,r)\bd{U}(r,s)=\bd{U}(t,s)$, $0\leq s\leq r\leq t\leq t_f$,
\item[(iii)] $\bd{U}(\cdot,s)$ is strongly continuous on $[s,t_f]$ and $\bd{U}(t,\cdot)$ is strongly continuous on $[0,t]$.
\end{enumerate}
\end{defn}
For the proof of the following property, as well as more details on evolution operators, see for example,  \cite[section 5.3]{CZnew}.
\begin{thm}
\label{thm mild evol op}
Let $\bd{A}$ be the generator of a $C_0$-semigroup $\bd{T}(t)$ on $\mathcal{H}$ and let $\bd{D}(t)\in \mathcal{C}([0,t_f],\mathcal{L}(\mathcal{H}))$. Then there exists a unique evolution operator $\bd{U}(t,s)$ satisfying
\begin{equation}
\label{mild evol op}
\bd{U}(t,s)\bd{x}=\bd{T}(t-s)\bd{x}-\int_s^t\bd{T}(t-r)\bd{D}(r)\bd{U}(r,s)\bd{x}dr
\end{equation}
for all $\bd{x}\in\mathcal{H}$. We call $\bd{U}(t,s)$ the evolution operator generated by $\bd{A}+\bd{D}(t)$.
\end{thm}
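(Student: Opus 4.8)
The plan is to construct $\bd{U}(t,s)$ by a standard Picard/Neumann-series iteration on the Volterra integral equation \eqref{mild evol op}, and then to deduce the three defining axioms together with uniqueness from a Gronwall argument. Since $[0,t_f]$ is compact, put $\tilde{M}:=\sup_{\tau\in[0,t_f]}\|\bd{T}(\tau)\|<\infty$ and $d:=\sup_{r\in[0,t_f]}\|\bd{D}(r)\|<\infty$, the latter being finite because $\bd{D}\in\mathcal{C}([0,t_f],\mathcal{L}(\mathcal{H}))$. For fixed $s$ define $\bd{U}_0(t,s):=\bd{T}(t-s)$ and recursively
$$\bd{U}_{n+1}(t,s)\bd{x}:=-\int_s^t\bd{T}(t-r)\bd{D}(r)\bd{U}_n(r,s)\bd{x}\,dr,\qquad \bd{x}\in\mathcal{H}.$$
An induction on $n$ gives $\|\bd{U}_n(t,s)\|\le \tilde{M}(\tilde{M}d)^n(t-s)^n/n!$, so the series $\bd{U}(t,s):=\sum_{n\ge 0}\bd{U}_n(t,s)$ converges in operator norm, uniformly on $\Delta(t_f)$, and summing the recursion shows that $\bd{U}(t,s)$ solves \eqref{mild evol op}.

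Next I would verify the evolution-operator axioms. Property (i), $\bd{U}(t,t)=\bd{I}$, is immediate from \eqref{mild evol op} since the integral over $[t,t]$ vanishes. For (iii), each map $\bd{U}_n(\cdot,s)\bd{x}$ and $\bd{U}_n(t,\cdot)\bd{x}$ is continuous — this follows inductively from strong continuity of $\bd{T}$, continuity of $\bd{D}$, and dominated convergence applied to the defining integral — and the uniform convergence of the series transfers strong continuity to $\bd{U}$. Uniqueness of solutions to \eqref{mild evol op} is the mechanism underlying the rest: if $\bd{U}^{(1)},\bd{U}^{(2)}$ both satisfy \eqref{mild evol op}, their difference $\bd{W}$ satisfies $\|\bd{W}(t,s)\bd{x}\|\le \tilde{M}d\int_s^t\|\bd{W}(r,s)\bd{x}\|\,dr$, hence $\bd{W}\equiv 0$ by Gronwall's inequality; the same estimate yields the uniqueness claimed in the statement.

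For the cocycle property (ii), fix $0\le s\le r\le t\le t_f$ and $\bd{x}\in\mathcal{H}$, and set $\bd{V}(t):=\bd{U}(t,r)\bd{U}(r,s)\bd{x}$. Applying \eqref{mild evol op} to $\bd{U}(t,r)$ acting on the vector $\bd{U}(r,s)\bd{x}$ gives $\bd{V}(t)=\bd{T}(t-r)\bd{U}(r,s)\bd{x}-\int_r^t\bd{T}(t-\tau)\bd{D}(\tau)\bd{V}(\tau)\,d\tau$. Substituting the integral equation for $\bd{U}(r,s)\bd{x}$ into the first term, using the semigroup law $\bd{T}(t-r)\bd{T}(r-\sigma)=\bd{T}(t-\sigma)$, and interchanging the order of integration in the resulting double integral, one regroups terms to find that $t\mapsto\bd{V}(t)$ and $t\mapsto\bd{U}(t,s)\bd{x}$ satisfy the same Volterra equation on $[r,t_f]$, hence coincide by the uniqueness just established. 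I expect this last step — the bookkeeping of the nested integrals and the Fubini interchange needed to recognize $\bd{V}$ as a solution of \eqref{mild evol op} — to be the main technical obstacle; everything else is routine once the a priori bound on $\|\bd{U}_n\|$ is in hand.
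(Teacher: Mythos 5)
Your proposal is correct, and it is essentially the standard successive-approximation argument: the paper does not prove this theorem itself but cites \cite[Section 5.3]{CZnew}, where the construction is exactly the Picard/Neumann series with the factorial bound, Gronwall uniqueness, and the cocycle property deduced by showing both sides solve the same Volterra equation. One small simplification: in your step for property (ii) no Fubini interchange is actually needed, since after substituting the equation for $\bd{U}(r,s)\bd{x}$ the bounded operator $\bd{T}(t-r)$ passes through the Bochner integral and the semigroup law already puts $\bd{V}$ and $\bd{U}(\cdot,s)\bd{x}$ in the same Volterra form on $[r,t_f]$.
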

For $\bd{f}(t)\in\mathcal{C}([s,t_f],\mathcal{H})$,
the mild solution to
\begin{align*}
\frac{\partial \bd{v}(t)}{\partial t}&=(\bd{A}+\bd{D}(t))\bd{v}(t)+\bd{f}(t)\\
\bd{v}(s)&=\bd{v}_0\in\mathcal{H}
\end{align*}
is  
$$
\bd{v}(t)=\bd{U}(t,s)\bd{v}_0 + \int_s^t\bd{U}(t,r)\bd{f}(r)dr.
$$

In the next section, a method of calculating the observer gain $\filt (t)$ based on the Extended Kalman filter is described.

\section{Definition and well-posedness of EKF}

The problem of observer design for linear systems has been well studied. The most widely known and used approach is the Kalman filter. 
Consider a time-varying linear system
\begin{align}
    \frac{\partial\z(t)}{\partial t} &= \tilde { \bd{A}} (t) \z (t)  + \bd{Bu}(t)  + \bd{\omega} (t) \label{eq-time-sys}.  \\
    \bd{y}(t) &= \bd{C} \bd{z}(t) + \bd{\eta} (t) . \nonumber
\end{align}
where $\tilde{\bd{A}}(t)$ generates an  evolution operator $\bd{U}_{\tilde A} (t,s)$ and $\bd{\omega}(t)$ and $\bd{\eta}(t)$ are process and output disturbance  respectively.
\begin{asp}
\label{asp operators}
 Let linear operators $\bd{P}_0\in \mathcal{L}(\ss)$, $\Wb(t)\in \C([0,t_f],\mathcal{L}(\ss))$  and $\bd{R}(t)\in \C([0,t_f],\mathcal{L}(\mathbb{R}^p))$ be self-adjoint. The operator $\bd{P}_0$ is positive definite; that is,  for every nonzero $\bd{v}\in\mathcal{H}$, $(\bd{v},\bd{P}_0\bd{v})>0$. Moreover, for all $t\in[0,t_f]$,  $\bd{W}(t)$ is nonnegative definite: for every nonzero $\bd{v}\in\mathcal{H}$, $(\bd{v},\bd{W}(t)\bd{v})\geq 0$. Finally, for all $t\in[0,t_f]$ the operator $\bd{R}(t)$ is uniformly coercive; meaning that there exists a $\delta_0>0$ such that for every $\bd{w}\in\mathbb{R}^p$ and $t\in [0,t_f]$, $(\bd{w},\bd{R}(t)\bd{w})\geq\delta_0\|\bd{w}\|^2$.
\end{asp}
Note that since $\bd{R}(t)$ is self-adjoint, coercive and bounded for each $t$, it follows that for all $t\in[0,t_f],$
  it has a self-adjoint bounded inverse $\bd{R}^{-1}(t)\in\mathcal{C}([0,t_f],\mathcal{H}).$
  
Linear {\em integral Riccati equations} are defined in the following theorem. For the proof we refer to \cite[Theorem 3.1 \& 3.3]{CRiccati-1976 } or \cite[Theorem 2.1]{curtain1976}.

\begin{thm}
\label{thm Riccati}
Let $\bd{U}(t,s)$ be an evolution operator on $\Delta(t_f)$. Under Assumption \ref{asp operators}, the following two integral Riccati equations
\begin{align}
\label{Riccati 1}
\bd{P}(t)\bd{w}&=\bd{U}_P(t,0)\bd{P}_0\bd{U}^\ast(t,0)\bd{w} +\int_0^t\bd{U}_P(t,s)\bd{W}(s)\bd{U}^\ast(t,s)\bd{w}ds,\quad\bd{w}\in\mathcal{H},\\
\label{LORiccati}
\bd{P}(t)\bd{w}&=\bd{U}_P(t,0)\bd{P}_0\bd{U}_P^*(t,0)\bd{w}+\int_0^t\bd{U}_P(t,s)(\bd{W}(s) +\bd{P}(s)\bd{C}^*\bd{R}^{-1}(s)\bd{C}\bd{P}(s))\bd{U}_P^*(t,s)\bd{w}ds,\quad\bd{w}\in\mathcal{H},
\end{align}
where the perturbed evolution operator
\begin{equation}
\label{Ladd2}
\bd{U}_P(t,s)\bd{w}=\bd{U}(t,s)\bd{w}-\int_{s}^{t}\bd{U}(t,r)\bd{P}(r)\bd{C}^*\bd{R}^{-1}(r)\bd{C}\bd{U}_P(r,s)\bd{w}dr,
\end{equation}
are equivalent and admit a unique, positive definite, self-adjoint solution $\bd{P}(t)\in\mathcal{C}([0,t_f],\mathcal{L}(\mathcal{H}))$.
\end{thm}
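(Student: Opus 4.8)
The plan is to treat the filter Riccati equation \eqref{LORiccati} together with the perturbation formula \eqref{Ladd2} as a coupled system of Volterra integral equations for the pair $(\bd{P},\bd{U}_P)$, and to solve it by a contraction/successive-approximation argument, first locally in time and then globally. I would work in the Banach space of bounded, strongly continuous $\mathcal{L}(\mathcal{H})$-valued functions — on $[0,t_f]$ for $\bd{P}$, and on $\Delta(t_f)$ for $\bd{U}_P$ — normed by the supremum of operator norms, using that an evolution operator is automatically bounded uniformly on the compact set $\Delta(t_f)$. The iteration is: given $\bd{P}_n$, first solve the \emph{linear} Volterra equation \eqref{Ladd2} with $\bd{P}=\bd{P}_n$ for the perturbed evolution operator $\bd{U}_P^{(n+1)}$ (uniquely solvable by a Neumann/Picard series, since $\bd{U}$, $\bd{C}$ and $\bd{R}^{-1}(t)$ are bounded uniformly in $t$ by Assumption \ref{asp operators}), verify that $\bd{U}_P^{(n+1)}$ is again an evolution operator, and then set $\bd{P}_{n+1}$ equal to the right-hand side of \eqref{LORiccati} evaluated at $(\bd{P}_n,\bd{U}_P^{(n+1)})$. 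Using Assumption \ref{asp operators}, on a short interval $[0,\tau]$ the map $\bd{P}_n\mapsto\bd{P}_{n+1}$ has Lipschitz constant at most $\tau$ times a polynomial in $\sup_{[0,\tau]}\|\bd{P}_n\|$, so for $\tau$ small it is a contraction on a suitable ball, giving a unique local solution.

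To globalize one needs an a priori bound, because the quadratic term $\bd{P}\bd{C}^*\bd{R}^{-1}\bd{C}\bd{P}$ in \eqref{LORiccati} is only locally Lipschitz. I would establish the operator inequality $0\preceq\bd{P}(t)\preceq\bar{\bd{P}}(t)$, where $\bar{\bd{P}}(t)=\bd{U}(t,0)\bd{P}_0\bd{U}^*(t,0)+\int_0^t\bd{U}(t,s)\bd{W}(s)\bd{U}^*(t,s)\,ds$ is the open-loop covariance. Nonnegativity is immediate from \eqref{LORiccati}, since every summand has the form $\bd{V}\bd{S}\bd{V}^*$ with $\bd{S}\succeq0$ (using $\bd{P}_0\succeq0$, $\bd{W}(s)\succeq0$, $\bd{R}^{-1}(s)\succ0$, and self-adjointness of $\bd{P}$ — both properties being preserved along the iteration and hence passing to the limit). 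The upper bound is a Riccati comparison statement — dropping the nonnegative ``gain'' term only increases the solution — which can also be read off from the minimum-variance characterization of $\bd{P}(t)$ underlying the Kalman filter. Since $\bar{\bd{P}}$ is continuous on the compact interval $[0,t_f]$ it is uniformly bounded, hence so is $\bd{P}$; the Lipschitz constant of the iteration on a subinterval of length $\tau$ is then bounded independently of the base time, so the local argument can be repeated finitely many times to cover $[0,t_f]$, and uniqueness globalizes the same way. Finally, positive definiteness follows from $(\bd{v},\bd{P}(t)\bd{v})\geq(\bd{U}_P^*(t,0)\bd{v},\bd{P}_0\bd{U}_P^*(t,0)\bd{v})$ together with $\bd{P}_0\succ0$ and injectivity of $\bd{U}_P^*(t,0)$, the latter obtained by a Gronwall estimate applied to the adjoint of \eqref{Ladd2}; continuity of $\bd{P}$ in the relevant operator topology is inherited from the data through the integral representation.

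It remains to identify the solutions of \eqref{Riccati 1} and \eqref{LORiccati}. Taking adjoints in \eqref{Ladd2}, in its equivalent form where the perturbation is placed on the other factor, gives $\bd{U}^*(t,s)=\bd{U}_P^*(t,s)+\int_s^t\bd{U}^*(r,s)\,\bd{C}^*\bd{R}^{-1}(r)\bd{C}\bd{P}(r)\,\bd{U}_P^*(t,r)\,dr$. Substituting this for the two $\bd{U}^*$ factors in \eqref{Riccati 1}, then applying Fubini, the composition law $\bd{U}_P(t,r)\bd{U}_P(r,s)=\bd{U}_P(t,s)$, and \eqref{Riccati 1} itself at intermediate times, one finds that all terms reassemble exactly into \eqref{LORiccati}, the extra $\int_0^t\bd{U}_P\,\bd{P}\bd{C}^*\bd{R}^{-1}\bd{C}\bd{P}\,\bd{U}_P^*$ contribution being precisely the ``square'' generated by the conversion; since \eqref{Riccati 1} paired with \eqref{Ladd2} also has a unique solution (by the same fixed-point scheme), the two forms define the same $\bd{P}$. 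I expect the main obstacles to be: (i) choosing the iteration space so that the contraction estimate closes although the evolution operators are only strongly, not uniformly, continuous, and obtaining the a priori bound cleanly enough that the locally-Lipschitz quadratic term causes no finite-time blow-up before $t_f$; and (ii) upgrading nonnegativity of $\bd{P}(t)$ to strict positive definiteness, which rests on an injectivity property of the perturbed evolution operator that is not a formal consequence of its boundedness. A complete treatment along these lines can be found in \cite{CRiccati-1976,curtain1976}.
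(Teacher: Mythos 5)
The paper does not actually prove this theorem: it is imported verbatim from \cite[Thms.~3.1 \& 3.3]{CRiccati-1976} and \cite[Thm.~2.1]{curtain1976}, and your outline — local contraction for the coupled Volterra system \eqref{LORiccati}/\eqref{Ladd2}, an a priori bound from the dual LQ/minimum-variance interpretation, continuation to $[0,t_f]$, and equivalence of \eqref{Riccati 1} and \eqref{LORiccati} by substituting the adjoint of the perturbation identity and using the composition law — is essentially a reconstruction of the argument in those references. Two caveats on that part: the bound $0\leq P(t)\leq \bar{P}(t)$ ``read off from the minimum-variance characterization'' is mildly circular, since the link between the integral equation and the dual optimal control/filtering problem is itself something that must be established first (it is, in the cited papers), and your Step~1 should be phrased so that self-adjointness and nonnegativity of $P_n$ are carried along the iteration exactly as you indicate; with those repairs the existence, uniqueness, self-adjointness, nonnegativity and equivalence claims are handled correctly in outline.

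The step that genuinely fails is strict positive definiteness. Injectivity of $U_P^*(t,0)$ cannot be obtained by a Gronwall estimate on the adjoint of \eqref{Ladd2}: taking adjoints gives $U_P^*(t,s)v=U^*(t,s)v-\int_s^t U_P^*(r,s)\,C^*R^{-1}(r)CP(r)U^*(t,r)v\,dr$, and in the companion identity with the roles of $U$ and $U_P$ exchanged one gets $U^*(t,s)v=U_P^*(t,s)v+\int_s^t U^*(r,s)\,C^*R^{-1}(r)CP(r)\,U_P^*(t,r)v\,dr$; in either form the integral involves $U_P^*$ at interior pairs or applied to vectors other than $v$, so the hypothesis $U_P^*(t,0)v=0$ (a single value of $s$) yields no inequality that closes. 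Moreover, no argument can succeed under Assumption \ref{asp operators} alone: take $C=0$, $W\equiv 0$, $R\equiv I$, $P_0=I$, and $U(t,s)=T(t-s)$ with $T$ the nilpotent left-shift semigroup on $L^2(0,1)$; then $U_P=U$ and $P(t)=T(t)P_0T(t)^*=0$ for $t\geq 1$, which is not positive definite. This is precisely why the cited theorems conclude only that the solution is nonnegative and self-adjoint; strict definiteness needs an additional hypothesis (for instance injectivity of $U^*(t,0)$, as when $U$ comes from a group, or coercive $W$), and the ``positive definite'' in the statement should be read accordingly. So your instinct that this point is not a formal consequence of boundedness was right, but the proposed Gronwall fix does not repair it.
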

The solution $\bd{P}(t)$ of the Riccati equations \eqref{Riccati 1} and \eqref{LORiccati}, defines an observer gain,
 $$\filt (t) = \bd{P}(t)\bd{C}^\ast \bd{R}^{-1}(t).$$
 Letting $U$ be the evolution operator generated by  $\tilde {\bd{A}} ,$  observer dynamics for \eqref{eq-time-sys} are
 \begin{align*}
    \frac{\partial\hat{\bd{z}}(t)}{\partial t} &= \tilde{ \bd{A}}(t) \hat{\bd{z}}(t) + \bd{Bu}(t) + \filt (t) (\bd{y}(t) - C \bd z(t)  )  .
\end{align*}
In the case that  $\bd{\omega}(t)$ and $\bd{\eta}(t)$ are process and sensor noises with covariances $\bd{W}(t)$ and $\bd{R}(t)$ respectively and the covariance of the
initial condition $\bd{\hat{z}}(0)$ is $\bd{P}_0$ then the  observer gain $\filt (t)$ and corresponding estimate $\hat \z (t)$ are optimal in a sense that $\hat{\z} (t)$ minimizes the  error covariance \cite{curtain1976}.  The observer in this case is  Kalman filter. For details, see the book \cite{CurtainPritchard1978} and for recent work on time-varying systems, \cite{WuJacob2015}.

The linearization of \eqref{Sys1g} will be used to define a integral Riccati equation similar to \eqref{Riccati 1}, \eqref{LORiccati}. Their solution $\bd{P}$ define the observer gain. In the case of finite-dimensional systems, this approach is known as an  extended Kalman filter (EKF) and this terminology will be used here. 

First, the  linearization of the system is defined.
For this purpose, for $\hat{\bd{z}}(t) \in C([0,t_f], \mathcal{H} ) , $   at time $t$ the Fr\'echet-derivative of $\bd{F}(\cdot,t), $ denoted by  $\bd{D}\bd{F}(\cdot,t):\mathcal{H}\rightarrow\mathcal{L}(\mathcal{H})$, is
\begin{equation}
\label{DR}
\bd{D}\bd{F}(\hat{\bd{z}}(t),t)=\frac{\partial  \bd{F}(\bd{z}(t),t)}{\partial \bd{z}(t)}\mid_{\bd{z}(t)=\hat{\bd{z}}(t)}.
\end{equation} 
Linearizing the system \eqref{Sys1g} around $\hat{\bd{z}}(t)$ yields
\begin{equation}
\label{OBS-Lin}
\frac{\partial\bd{z}(t)}{\partial t}=\bd{A}\bd{z}(t)+\bd{F}(\hat{\bd{z}}(t),t) + \bd{D}\bd{F}(\hat{\bd{z}}(t),t)[\bd{z}(t)-\hat{\bd{z}}(t)]
+\bd{B}_d\bd{u}_d(t).
\end{equation} 
To obtain the EKF equations, the solution to  a Riccati equation for  the linear system \eqref{OBS-Lin}  is needed.   This Riccati equation will contain the Fr\'echet-derivative  $\bd{D}\bd{F}(\hat{\bd{z}}(t),t)$, a possibly nonlinear function of the observer state $\hat{\bd{z}}(t). $ It is for that reason, that the Riccati equation still depends on the observer state $\hat{\bd{z}}$, that recent results on the Riccati equation do not provide well-posedness for the coupled system we study here.

For a bounded linear operator $\bd{P}(t)\in C([0,t_f];\mathcal{L}(\mathcal{H}))$  and $ \zh_{0} \in \ss$, define  $\hat{\bd{z}}_P(t)$ as
\begin{equation}
\begin{split}
\zh_{P}(t)=\T(t)\zh_{0} &+ \int_0^t \T(t-s)\big(\Fb(\zh_P(s),s)+\bd{B}\bd{u}(s)\big)ds
\\
 &+ \int_0^t \T(t-s)\bd{P}(s)\bd{C}^\ast \bd{R}^{-1}(s)[\bd{y}(s)-\bd{C}\zh_P(s)]ds.
 \end{split}
\label{statespaceobserver_notime}
\end{equation}
If this equation has a unique solution for $\zh_P, $ it defines a mapping
$$
\zhP(t)  =\G_1(\bd{P}(t)).
$$
For $\hat{\z}_P(\cdot) \in C([0,t_f]; \ss),$   $\w \in \ss ,$  an evolution operator is defined as 
\begin{equation}
\U ( t,s)\bd{w}=\T(t-s)\bd{w}
+\int_{s}^{t}\T(t-r)\bd{D}\bd{F}(\hat \z_P (r),r) \U (r,s) \bd{w} dr .
\label{add1}
\end{equation}
This evolution operator is generated by the time-varying operator $\bd{A} +\bd{DF}(\hat \z_P  (t),t) . $
For each $\hat \z_P (\cdot) $   the associated time-varying system,  $\bd{w}\in \ss $ and the operators $\bd{P}_0, \bd R  , \bd W , $ imply an integral Riccati equation
\begin{align}
\label{add2}
\bd{U}_P(t,s)\bd{w}&=\bd{U}(t,s)\bd{w}  -\int_{s}^{t}\bd{U}(t,r)\bd{P}(r)\bd{C}^*\bd{R}^{-1}(r)\bd{C}\bd{U}_P(r,s)\bd{w}dr. 
\\
\label{ORiccati}
\bd{P}(t)\bd{w}&=\bd{U}_P(t,0)\bd{P}_0\bd{U}^\ast(t,0)\bd{w}   +\int_0^t\bd{U}_P(t,s)\bd{W}(s)\bd{U}^\ast(t,s)\bd{w}ds. 
\end{align}
  This defines a  second  mapping  
\begin{align*}
\bd{P}(t)&=\G_2(\zh_P (t)) . 
\end{align*}

It will be shown  that the mappings $\G_1 , $ $ \G_2$ are well-defined. Then it is shown  that the composite mapping $\G(\cdot)=\G_2(\G_1(\cdot))$   has a unique fixed point,
$\bd{P}(t)=\G(\bd{P}(t)).$
This will show  that the observer dynamics coupled with extended Riccati equations are well-posed in a sense that  equations \eqref{statespaceobserver_notime}, \eqref{add1}, \eqref{add2}, \eqref{ORiccati} have a unique solution $\zhP(t)\in C([0,t_f];\mathcal{H})$ and $\bd{P}(t)\in C([0,t_f];\mathcal{L}(\mathcal{H})) .$
The operator
\begin{equation}
\label{filter}
\filt (t)=\bd{P}(t)\bd{C}^*\bd{R}^{-1}(t)
\end{equation}
will then define an observer \eqref{OBD}.
The proof of well-posedness  is in \cite[Chapter 7]{Fabian} but is provided here for completeness.  

\begin{prop}
The mapping $G_1$ defined by 
$
\zhP(t)  =\G_1(\bd{P}(t))$
is   well-defined and  $\G_1:\C([0,t_f],\mathcal{L}(\ss)) \longrightarrow \C([0,t_f],\ss).$
\end{prop}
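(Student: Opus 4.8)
The plan is to recognize that equation \eqref{statespaceobserver_notime} is, for a \emph{fixed} operator-valued function $\bd{P}(\cdot)\in\mathcal{C}([0,t_f],\mathcal{L}(\ss))$, precisely a mild-solution equation of the type handled by Theorem \ref{Thm Pazy} and Proposition \ref{WPness}. Indeed, setting $\bd{L}(t):=\bd{P}(t)\bd{C}^\ast\bd{R}^{-1}(t)$, equation \eqref{statespaceobserver_notime} reads
\begin{equation*}
\zh_P(t)=\bd{T}(t)\zh_0+\int_0^t\bd{T}(t-s)\big[\bd{F}(\zh_P(s),s)+\bd{Bu}(s)+\bd{L}(s)(\bd{y}(s)-\bd{C}\zh_P(s))\big]ds,
\end{equation*}
which is exactly the integral equation appearing in Proposition \ref{WPness} with observer gain $\bd{L}$. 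So the first step is to verify the hypotheses of that proposition: $\bd{A}$ generates $\bd{T}(t)$ and $\bd{F}$ satisfies Assumption \ref{asp Frechet} by standing assumption; $\bd{u}\in\mathcal{C}([0,t_f],\mathcal{H}_1)$ and $\bd{y}\in\mathcal{C}([0,t_f],\mathbb{R}^p)$ as established earlier; and I must check that $\bd{L}(\cdot)=\bd{P}(\cdot)\bd{C}^\ast\bd{R}^{-1}(\cdot)\in\mathcal{C}([0,t_f],\mathcal{L}(\mathbb{R}^p,\ss))$. This last point follows because $\bd{P}(\cdot)$ is continuous into $\mathcal{L}(\ss)$ by assumption, $\bd{C}^\ast\in\mathcal{L}(\mathbb{R}^p,\ss)$ is fixed, and $\bd{R}^{-1}(\cdot)\in\mathcal{C}([0,t_f],\mathcal{L}(\mathbb{R}^p))$ by the remark following Assumption \ref{asp operators}; products of strongly/norm-continuous bounded-operator-valued functions are continuous, so the composition is continuous in operator norm.

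Given this, Proposition \ref{WPness} (equivalently Theorem \ref{Thm Pazy} applied to the augmented nonlinearity $\tilde{\bd{F}}(x,t)=\bd{F}(x,t)+\bd{L}(t)[\bd{y}(t)-\bd{Cx}]$, which inherits Assumption \ref{asp Frechet} since $\bd{L}(t)\bd{C}$ is bounded uniformly in $t$ and $\bd{L}(t)\bd{y}(t)$ is a continuous-in-time additive term) yields a \emph{unique} mild solution $\zh_P(t)\in\mathcal{C}([0,t_f],\ss)$ to \eqref{statespaceobserver_notime}. Uniqueness of the solution is what makes the assignment $\bd{P}(\cdot)\mapsto\zh_P(\cdot)$ single-valued, hence a genuine map $\G_1$; and since the output is in $\mathcal{C}([0,t_f],\ss)$, we get $\G_1:\mathcal{C}([0,t_f],\mathcal{L}(\ss))\to\mathcal{C}([0,t_f],\ss)$ as claimed. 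I should also remark that the initial datum $\zh_0\in\ss$ is treated as a fixed parameter here, so $\G_1$ depends on it but this does not affect well-definedness.

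The only genuinely nontrivial point — and thus the main obstacle — is confirming that the perturbed nonlinearity $\tilde{\bd{F}}(\cdot,t)$ satisfies Assumption \ref{asp Frechet} with constants that are \emph{uniform in} $t\in[0,t_f]$, rather than merely pointwise in $t$. Its Fréchet derivative is $\bd{DF}(x,t)-\bd{L}(t)\bd{C}$; global boundedness in operator norm needs $\sup_{t}\|\bd{L}(t)\bd{C}\|<\infty$, which holds because $t\mapsto\|\bd{L}(t)\|$ is continuous on the compact interval $[0,t_f]$ and hence bounded, and $\bd{C}$ is bounded. Local Lipschitz continuity (uniformly in time) of the derivative is immediate since $x\mapsto\bd{L}(t)\bd{C}$ is constant in $x$, so the Lipschitz constant of $\tilde{\bd{F}}$'s derivative equals that of $\bd{DF}$, which is already uniform in $t$ by Assumption \ref{asp Frechet}. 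Once this verification is in place, the proposition follows directly by quoting Proposition \ref{WPness}; no fixed-point argument is needed at this stage (that is deferred to the composite map $\G$).
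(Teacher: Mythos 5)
Your proposal is correct and follows essentially the same route as the paper: note that $\bd{L}(t)=\bd{P}(t)\bd{C}^\ast\bd{R}^{-1}(t)$ is continuous into $\mathcal{L}(\mathbb{R}^p,\ss)$ (by continuity of $\bd{P}$ and Assumption \ref{asp operators}) and then invoke Proposition \ref{WPness}. The extra verification you give of Assumption \ref{asp Frechet} for the augmented nonlinearity is exactly the content already contained in the proof of Proposition \ref{WPness}, so nothing further is needed.
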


\begin{proof}
Due to $\bd{P}(t)\in\mathcal{C}([0,t_f],\mathcal{L}(\mathcal{H}))$ and  Assumption \ref{asp operators}, $\bd{L}(t)\in\mathcal{C}([0,t_f]; \mathcal{L}(\mathbb{R}^p,\mathcal{H}) ) . $ Hence, by Proposition \ref{WPness}  $\hat{\bd{z}}_P(t)\in\mathcal{C}([0,t_f],\mathcal{H})$.
\end{proof}

\begin{prop}
The mapping
\begin{equation*}
\begin{split}\G_2:\C([0,t_f],\ss)&\longrightarrow \C([0,t_f],\mathcal{L}(\ss))\\
\end{split}
\end{equation*}
is well-defined.
\end{prop}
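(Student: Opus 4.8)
The plan is to unpack the definition of $\G_2$ and reduce it to the two abstract results already in hand, Theorem~\ref{thm mild evol op} and Theorem~\ref{thm Riccati}. Fix an arbitrary $\hat{\bd z}_P(\cdot)\in\C([0,t_f],\ss)$. By construction, the image $\bd P(\cdot):=\G_2(\hat{\bd z}_P(\cdot))$ is the solution of the coupled integral Riccati system \eqref{add2}, \eqref{ORiccati} built from the evolution operator \eqref{add1}; so "$\G_2$ is well-defined with the stated domain and codomain" amounts to showing that for every such $\hat{\bd z}_P$ there exists exactly one $\bd P(t)\in\C([0,t_f],\mathcal{L}(\ss))$ solving \eqref{add2}, \eqref{ORiccati}. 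I would do this in two stages: first construct the evolution operator $\bd U(t,s)$ of \eqref{add1}, then feed it into the Riccati theory.

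\emph{Stage 1 (the evolution operator).} I would show that $\bd D(\cdot):=\bd{DF}(\hat{\bd z}_P(\cdot),\cdot)$ lies in $\C([0,t_f],\mathcal{L}(\ss))$. For $t,t'\in[0,t_f]$ one splits $\|\bd D(t)-\bd D(t')\|\le\|\bd{DF}(\hat{\bd z}_P(t),t)-\bd{DF}(\hat{\bd z}_P(t'),t)\|+\|\bd{DF}(\hat{\bd z}_P(t'),t)-\bd{DF}(\hat{\bd z}_P(t'),t')\|$; the first term is controlled by $\iota_{DF}\|\hat{\bd z}_P(t)-\hat{\bd z}_P(t')\|$ via the local-Lipschitz part of Assumption~\ref{asp Frechet} (one fixed $\iota_{DF}$ works since $\hat{\bd z}_P([0,t_f])$ is bounded in $\ss$), which $\to 0$ by continuity of $\hat{\bd z}_P$; the second term $\to 0$ by the time-regularity of $\bd{DF}$. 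Since $\bd A$ generates the $C_0$-semigroup $\bd T(t)$ and $\bd D(\cdot)\in\C([0,t_f],\mathcal{L}(\ss))$ (and $\|\bd D(t)\|\le\delta_{DF}$), Theorem~\ref{thm mild evol op}, applied with $\bd D(\cdot)=-\bd{DF}(\hat{\bd z}_P(\cdot),\cdot)$, furnishes the unique evolution operator $\bd U(t,s)$ on $\Delta(t_f)$ satisfying exactly \eqref{add1}.

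\emph{Stage 2 (the Riccati equation).} With this $\bd U(t,s)$ in hand, the system \eqref{add2}, \eqref{ORiccati} is literally the pair of integral Riccati equations \eqref{Ladd2}, \eqref{Riccati 1} for that evolution operator, and the data $\bd P_0,\bd W(t),\bd R(t)$ satisfy Assumption~\ref{asp operators} independently of $\hat{\bd z}_P$. Hence Theorem~\ref{thm Riccati} applies verbatim and yields a unique, positive-definite, self-adjoint solution $\bd P(t)\in\C([0,t_f],\mathcal{L}(\ss))$. Setting $\G_2(\hat{\bd z}_P(\cdot)):=\bd P(\cdot)\in\C([0,t_f],\mathcal{L}(\ss))$, and recalling that $\hat{\bd z}_P$ was arbitrary, shows $\G_2$ is well-defined on $\C([0,t_f],\ss)$ with values in $\C([0,t_f],\mathcal{L}(\ss))$.

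The only step requiring genuine care is the continuity claim in Stage~1, and within it the \emph{temporal} continuity of $t\mapsto\bd{DF}(\hat{\bd z}_P(t),t)$ in operator norm: Assumption~\ref{asp Frechet} is phrased as global boundedness plus state-Lipschitz continuity uniform in $t$, so closing this step relies on the time-regularity built into $\bd F$/$\bd{DF}$ (or on strengthening the hypothesis to joint continuity of $\bd{DF}$); this is also the analogue of the continuity check underlying the $\G_1$ proposition, only now with the state-dependent $\bd{DF}$ rather than the product $\bd P(t)\bd C^\ast\bd R^{-1}(t)$. Everything past that point is a direct appeal to Theorems~\ref{thm mild evol op} and~\ref{thm Riccati}; no fixed-point argument is needed here — that is reserved for the composite $\G=\G_2\circ\G_1$.
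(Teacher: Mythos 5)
Your proposal is correct and follows essentially the same route as the paper: verify that $t\mapsto\bd{DF}(\hat{\bd z}_P(t),t)$ lies in $\C([0,t_f],\mathcal{L}(\ss))$, invoke Theorem~\ref{thm mild evol op} to obtain the evolution operator of \eqref{add1}, and then apply Theorem~\ref{thm Riccati} to get the unique solution $\bd P(t)\in\C([0,t_f],\mathcal{L}(\ss))$ of \eqref{add2}--\eqref{ORiccati}. The only difference is that you spell out the operator-norm continuity in time (which the paper simply asserts) and correctly flag that it rests on the time-regularity of $\bd{DF}$ beyond the literal wording of Assumption~\ref{asp Frechet}; this is a fair refinement, not a different argument.
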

\begin{proof}
It suffices to note that for $\bd{x}(t)\in \C([0,t_f],\ss)$, $\bd{D}(\bd{x}(t),t)\in\mathcal{C}([0,t_f],\mathcal{L}(\mathcal{H}))$ and therefore, by Theorem \eqref{thm mild evol op} the operator $\bd{A}+\bd{D}(\bd{x}(t),t)$ generates a evolution operator 
 \begin{equation}
\label{evolution_equation_1}
\U_{x} (t,s)\bd{w} = \T (t-s)\bd{w}+\int_s^t\T (t-r)\bd{D}\Fb(\bd{x}(r),r)\U_{x} (r,s)\bd{w} dr,\quad\bd{w}\in\mathcal{H}.
\end{equation}
By Theorem \ref{thm Riccati}, there is a unique, positive, self-adjoint $\bd{P}_x(t)\in\mathcal{C}([0,t_f],\mathcal{L}(\mathcal{H}))$ satisfying for all $\bd{w}\in \mathcal{H}$,
 \begin{align}
\label{perturbed_evol_operator}
\U_P(t,0) \bd{w}&=\U_{x}(t,0)\bd{w} -\int_0^t \U_{x}(t,r)\bd{P}_x(r)\bd{C}^\ast \bd{R}^{-1}(r)\bd{C}\U_P(r,0)\bd{w} dr\\
\bd{P}_x(t)\bd{w}  &= \U_P(t,0)\bd{P}_0\U_{x}^\ast (t,0)\bd{w} +\int_{0}^t \U_P(t,s)\bd{W}(s)\U_{x}^\ast (t,s)\bd{w} ds.
 \label{ire2}
\end{align}
Therefore, the mapping $\G_2(\cdot)$ is well-defined from its domain to the range. 
\end{proof}

The following result extends EKF based observer design to the class of semilinear infinite-dimensional systems considered in this paper.

\begin{thm}
\label{well-posed_thm_coupling}
Let Assumptions \ref{asp Frechet} and \ref{asp operators} hold. 
For any $\bd{u}(t)\in C([0,t_f],\mathcal{H}_1), $ $y(t)\in C([0,t_f],\mathbb{R}^p)$ and $\zh_0\in\ss$ 
 there exist $\zhP(t)\in\C([0,t_f],\ss)$ and $\bd{P}(t)\in\C([0,t_f],\mathcal{L}(\ss))$ such that $\zhP(t)$ solves \eqref{statespaceobserver_notime}
and $\bd{P}(t)$ satisfies the Riccati equation \eqref{ORiccati}, coupled to \eqref{add1} and  \eqref{add2}.
\end{thm}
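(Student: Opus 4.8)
The plan is to recast the coupled system \eqref{statespaceobserver_notime}, \eqref{add1}, \eqref{add2}, \eqref{ORiccati} as a single fixed-point problem $\Pb=\G(\Pb)$ with $\G:=\G_2\circ\G_1$ on $\C([0,t_f],\mathcal{L}(\ss))$, and to solve it with the Banach fixed-point theorem, first on a short interval $[0,\tau]$ and then, by concatenation, on all of $[0,t_f]$. A fixed point $\Pb$ gives $\zhP=\G_1(\Pb)$ solving \eqref{statespaceobserver_notime}, and, since $\Pb=\G_2(\zhP)$, the evolution operators $\U$, $\Up$ of \eqref{add1}, \eqref{add2} together with $\Pb$ satisfying \eqref{ORiccati}. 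The two propositions above already give that $\G_1$, $\G_2$ and hence $\G$ are well-defined, so only the contraction property remains.

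First I would collect uniform a priori bounds. Write $M_T=\sup_{0\le t\le t_f}\|\T(t)\|$ and $s_0=\delta_0^{-1}\|\bd C\|^2\ge\sup_t\|\bd C^\ast\bd R^{-1}(t)\bd C\|$. Since $\|\bd{DF}(x,t)\|\le\delta_{DF}$ for \emph{every} $x$ (Assumption \ref{asp Frechet}), Gronwall's inequality applied to \eqref{add1} gives $\|\U(t,s)\|\le\kappa:=M_T e^{M_T\delta_{DF}t_f}$, independently of the argument of $\G_2$. By the positivity and self-adjointness of $\Pb$ from Theorem \ref{thm Riccati} and the standard comparison argument for Riccati equations (deleting the nonnegative feedback term $\Pb\bd C^\ast\bd R^{-1}\bd C\Pb$), $0\le\Pb(t)\le\tilde{\Pb}(t)$, where $\tilde{\Pb}$ solves the resulting Lyapunov equation; hence $\|\Pb(t)\|\le R_0:=\kappa^2(\|\bd{P}_0\|+t_f\sup_s\|\Wb(s)\|)$, uniformly. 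Then $\|\bd{DF}(\cdot,t)-\Pb(t)\bd C^\ast\bd R^{-1}(t)\bd C\|\le\delta_{DF}+R_0 s_0$, so Gronwall on \eqref{add2} gives $\|\Up(t,s)\|\le\kappa_P:=M_T e^{M_T(\delta_{DF}+R_0 s_0)t_f}$, again uniformly. In particular $\G$ maps the closed ball $\mathcal B_{R_0}=\{\Pb\in\C([0,t_f],\mathcal{L}(\ss)):\sup_t\|\Pb(t)\|\le R_0\}$ into itself. Finally, Gronwall applied to \eqref{statespaceobserver_notime} with these bounds and the given $\bd u,y$ yields $\sup_t\|\zhP(t)\|\le\rho$ uniformly over $\Pb\in\mathcal B_{R_0}$; let $\iota_{DF}$ be the Lipschitz constant from Assumption \ref{asp Frechet} for the ball of radius $\rho$, uniform in $t$.

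Next come the two Lipschitz estimates. For $\Pb^{(1)},\Pb^{(2)}\in\mathcal B_{R_0}$, subtracting the corresponding copies of \eqref{statespaceobserver_notime}, using the global Lipschitz constant $M$ of $\bd F$ (the proposition after Assumption \ref{asp Frechet}), the bounds $R_0$ and $s_0$, boundedness of $y$, and Gronwall, gives
$$
\sup_{0\le r\le\tau}\|\G_1(\Pb^{(1)})(r)-\G_1(\Pb^{(2)})(r)\|\le c_1(\tau)\sup_{0\le r\le\tau}\|\Pb^{(1)}(r)-\Pb^{(2)}(r)\|,
$$
with $c_1(\tau)\to0$ as $\tau\to0$, since the estimate carries an explicit factor $\tau$. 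For $\zhP^{(1)},\zhP^{(2)}$ taking values in the $\rho$-ball, Assumption \ref{asp Frechet} gives $\|\bd{DF}(\zhP^{(1)}(t),t)-\bd{DF}(\zhP^{(2)}(t),t)\|\le\iota_{DF}\|\zhP^{(1)}(t)-\zhP^{(2)}(t)\|$, and Gronwall on \eqref{add1} then bounds $\|\U^{(1)}(t,s)-\U^{(2)}(t,s)\|$ by a constant times $\sup_t\|\zhP^{(1)}(t)-\zhP^{(2)}(t)\|$. Subtracting the two copies of \eqref{add2} and \eqref{ORiccati} and running a \emph{simultaneous} Gronwall estimate for the pair $\big(\sup_{s\le t}\|\Up^{(1)}(t,s)-\Up^{(2)}(t,s)\|,\ \|\Pb^{(1)}(t)-\Pb^{(2)}(t)\|\big)$, using the uniform bounds $\kappa,\kappa_P,R_0,s_0$, yields
$$
\sup_{0\le r\le\tau}\|\G_2(\zhP^{(1)})(r)-\G_2(\zhP^{(2)})(r)\|\le c_2\sup_{0\le r\le\tau}\|\zhP^{(1)}(r)-\zhP^{(2)}(r)\|,
$$
with $c_2$ independent of $\tau\in(0,t_f]$. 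Composing, $\G$ has Lipschitz constant $c_2 c_1(\tau)$ on $\C([0,\tau],\mathcal{L}(\ss))$; choosing $\tau$ with $c_2 c_1(\tau)<1$, the Banach fixed-point theorem gives a unique fixed point of $\G$ on $[0,\tau]$. Since all the constants depend only on $t_f,M_T,\delta_{DF},M,\iota_{DF},\|\bd C\|,\delta_0,\|\bd{P}_0\|$ and $\sup_s\|\Wb(s)\|$, and not on the starting time, the same $\tau$ handles the shifted problems on $[\tau,2\tau],[2\tau,3\tau],\dots$ with initial data inherited from the previous piece; concatenating the finitely many pieces produces the required $\Pb\in\C([0,t_f],\mathcal{L}(\ss))$ and $\zhP=\G_1(\Pb)\in\C([0,t_f],\ss)$. (Alternatively one may work directly on $[0,t_f]$ with a Bielecki-weighted norm $\sup_t e^{-\lambda t}\|\Pb(t)\|$, $\lambda$ large.)

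I expect the main obstacle to be the Lipschitz estimate for $\G_2$, i.e. the continuous dependence of the Riccati solution on the linearization $\bd{DF}(\zhP(\cdot),\cdot)$: because \eqref{add2}--\eqref{ORiccati} couples $\Up$ and $\Pb$ implicitly, this dependence cannot be read off term by term, but must be obtained by estimating the two differences jointly, all while tracking the uniform a priori bounds and exploiting the sign structure of the feedback term.
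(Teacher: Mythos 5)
Your proposal is correct in substance and shares the paper's core architecture: the same recasting as a fixed point of $\G=\G_2\circ\G_1$, the same invariant ball of radius determined by $\|\bd{P}_0\|+t_f\sup_s\|\Wb(s)\|$ times the square of the Gr\"onwall bound on the unperturbed evolution operator, and essentially the same Lipschitz estimates for $\G_1$ and $\G_2$ (your ``simultaneous Gr\"onwall'' for the pair $(\Delta\U_P,\Delta\Pb)$ is exactly the paper's Steps 2.1--2.2). Where you diverge is the final fixed-point device: the paper turns its estimate into the Volterra-type bound \eqref{formel_G}, iterates it to get the factorial estimate \eqref{induction_formel}, and concludes that $\G^n$ is a contraction on all of $[0,t_f]$, so no subdivision of the time interval is ever needed; you instead prove a short-time contraction (using that the $\G_1$-Lipschitz constant carries a factor $\tau$) and then concatenate. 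The concatenation buys nothing over the iterate trick and costs you one step you currently gloss over: you must check that the solution on $[0,\tau]$ glued to the solution of the restarted problem on $[\tau,2\tau]$ solves the original integral equations \eqref{statespaceobserver_notime} and \eqref{ORiccati} on $[0,2\tau]$, i.e.\ the flow property of the coupled integral Riccati equation with the (only nonnegative, not necessarily positive definite) restart datum $\Pb(\tau)$ and observer datum $\zhP(\tau)$. This is true and standard for Volterra-type equations, and your parenthetical Bielecki-norm alternative also sidesteps it, but as written it is an unproved step; the paper's $\G^n$ argument is the cleaner way to avoid it while using exactly the estimates you already have.
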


\begin{proof}
For $\bd{x}=\zhP(t)$, define the mapping $\G=\G_2\circ\G_1$ by 
\begin{equation*}
\begin{split}
\G:\C([0,t_f],\mathcal{L}(\ss))&\longrightarrow\C([0,t_f],\mathcal{L}(\ss))\\
\bd{P}(t)&\mapsto \bd{P}_\zhP(t).
\end{split}
\end{equation*}
It will now be shown that 
$\G$ has a unique fixed point in $C([0,t_f]; \cL (\ss ) ) . $  This means that there is a unique pair $(\zh_P(t),\bd{P}(t))$ with $\zhP(t)$ satisfying the semilinear system \eqref{statespaceobserver_notime} and $\bd{P}(t)$ the Riccati equations \eqref{ORiccati}-\eqref{add2} or equivalently \eqref{perturbed_evol_operator} and \eqref{ire2} for $\bd{x}(t)=\zhP(t)$. This will imply  that  the semilinear system  \eqref{OBD} coupled with the Riccati equations \eqref{ORiccati}-\eqref{add1} has a unique solution.

The proof is divided into three steps:
\begin{enumerate}
\item Define the closed ball
$$\mathbb{P}_{t_f}(\delta_p)=\{ \bd{P}(t) \in C([0,t_f]; \mathcal{L} (\mathcal{H}))  , \;  \Vert\bd{P}(t)\Vert\leq\delta_p \} . $$
It will be shown that for  suitable  $\delta_p$  the mapping $\G$ maps the ball $\mathbb{P}_{t_f}(\delta_p)$ to itself, i.e. $\G:\mathbb{P}_{t_f}(\delta_p)\longrightarrow\mathbb{P}_{t_f}(\delta_p).$ 
\item   Show $\G^n$ is contractive on $\mathbb{P}_{t_f}(\delta_p)$ for large enough $n$.
\item  A fixed point argument concludes the proof.
\end{enumerate}

\textbf{Step 1:} In order to prove the first part of the proof, let $\delta_{DF}$ be the bound for $\bd{DF}(\cdot,\cdot) $ given by Assumption \ref{asp Frechet}.
 Choose $\bd{P}(t)\in \C([0,t_f],\mathcal{L}(\ss))$.
Using the Gr\"{o}nwall inequality, we can conclude from \eqref{evolution_equation_1} that for all $t\in [0,t_f]$,
\begin{equation}
\label{U_zh_bound}
\begin{split}
\max_{0\leq s\leq t}\|\Uzh(t,s)\|&\leq\delta_{T,\alpha} + \delta_{T,\alpha} \delta_{DF}\int_0^t \max_{0\leq s \leq r}\|\Uzh(r,s)\|dr\\
&\leq \delta_{T,\alpha}\exp(\delta_{T,\alpha}\delta_{DF} t_f)\\
&=: \delta_{U_{\zhP}},
\end{split}
\end{equation}
where $\delta_{T,\alpha}=\max_{t\in[0,t_f]}\|T_\alpha(t)\|$.
Now, let $\bd{P}(t)\in\mathbb{P}_{t_f}(\delta_p)$. Define $\delta_W=\max_{[0,t_f]}\|\bd{W}(t)\|$. By \cite[Lemma 2.2]{CRiccati-1976},  
\begin{equation*}
\begin{split}
\max_{[0,t_f]}\|\G(\bd{P}(t))\|&\leq(\|\bd{P}_0\|+t_f\delta_W)\delta_{\Uzh}^2\leq \delta_p
\end{split}
\end{equation*}
where the last inequality is true if $\delta_p$ is sufficiently large. 
For such $\delta_p$, we can conclude the first part of the proof, or 
\begin{equation*}
\G:\mathbb{P}_{t_f}(\delta_p)\longrightarrow\mathbb{P}_{t_f}(\delta_p).
\end{equation*}

\textbf{Step 2:} Now, we prove that $\G^n$ is  a contraction on $\mathbb{P}_{t_f}(\delta_p)$ for large enough $n\in\mathbb{N}$. For $i=1,2$, let $\bd{P}_i(t)\in\mathbb{P}_{t_f}(\delta_p)$ and define
\begin{description}
\item $\bullet$ $\hat{z}_P(t)=\G_1(\bd{P}_i(t))$ satisfying    \eqref{statespaceobserver_notime} with $\bd{P}(t)=\bd{P}_i(t)$
\item $\bullet$ $U_{\hat{z}_P,i}(t,s)$ being the evolution operator given by \eqref{evolution_equation_1} with $\bd{x}(t)=\hat{z}_P(t)$
\item $\bullet$ $U_{P,i}(t,s)$ being the perturbation of $\U_{\zhP,i}(t,s)$ by $-\G(\bd{P}_i(t))\bd{C}^\ast \bd{R}^{-1}(t)\bd{C}$ given by \eqref{perturbed_evol_operator}.
\end{description}
Moreover define 
\begin{align*}
\Delta \zhP(t)&=\zh_{P_1}(t)-\zh_{P_2}(t), &
\DUzh(t,s) &= \U_{\zhP,1}(t,s)-\U_{\zhP,2}(t,s),\\
\Delta\Pb(t)&=\Pb_1(t)-\Pb_2(t), &
\Delta \U_P(t,s)&=\U_{P,1}(t,s)-\U_{P,2}(t,s),\\
\Delta\bd{G}(t)&=\bd{G}(\bd{P}_1(t))-\bd{G}(\bd{P}_2(t)).
\end{align*}

\textbf{Step 2.1:} In this step, it is shown that the operators $\Delta \Ukc(t,s)$ and $\Delta U_{\zhP}(t,s)$ are bounded and the bounds are defined.

For $\zhP(t)$ satisfying \eqref{statespaceobserver_notime}, by Assumption \ref{asp Frechet} and the Gr\"{o}nwall inequality, it can be shown that there exists $\delta_\zhP>0$ such that for all $t\in [0,t_f]$,
\begin{equation}
\label{bound_zhP}
\|\zhP(t)\|\leq \delta_\zhP,\quad\text{for all }\bd{P}(t)\in\mathbb{P}_{t_f}(\delta_p).
\end{equation}
For the difference $\Delta\zhP$ we compute
\begin{equation}
\label{zpDel}
\begin{split}
\Delta\zhP(t)&=\int_0^t \bd{T}(t-s)\big(\bd{F}(\zh_{P_1}(s),s)-\bd{F}(\zh_{P_2}(s),s)\big)ds  + \int_0^t \bd{T}(t-s)\Delta \bd{P}(s)\bd{C}^\ast \bd{R}^{-1}(s) y(s)ds \\
&-\int_0^t \bd{T}(t-s)\big(\Delta \bd{P}(s)\bd{C}^\ast \bd{R}^{-1}(s)\bd{C}\zh_{P_1}(s)+\bd{P}_2(s)\bd{C}^\ast \bd{R}^{-1}(s)\bd{C}\Delta\zhP(s)\big)ds\\
\end{split}.
\end{equation}
From \eqref{zpDel}, Assumptions \ref{asp Frechet} and \ref{asp operators}, inequality \eqref{bound_zhP}, and boundedness of the operators $\bd{T}(t-s)$, $\bd{C}$, and $\bd{R}^{-1}(t)$,  it is concluded that there are constants $\delta_1,\delta_2>0$ and $c_1>0$ such that for all $t\in [0,t_f]$,
\begin{equation}
\label{bound_dzh}
\begin{split}
\|\Delta\zhP(t)\|&\leq  \delta_1\max_{0\leq s\leq t}\|\Delta \bd{P}(s)\| + 
 \delta_2\int_0^t \|\Delta\zhP(s)\|ds.\\
 &\leq c_1 \max_{0\leq s\leq t}\|\Delta \bd{P}(s)\|
\end{split}
\end{equation}
where the last inequality was obtained by applying the Gr\"{o}nwall inequality.

Similarly, from \eqref{evolution_equation_1}, it is derived that for all $\bd{w}\in\mathcal{H}$,  
\begin{equation}
\label{UzpDel}
\begin{aligned}
\Delta\U_{\zhP}(t,s)\bd{w} &= \int_s^t \T(t-r)\big(\DR(\zh_{P_1}(r),r)-\DR(\zh_{P_2}(r),r)\big)\U_{\zhP,1}(r,s)\bd{w}dr\\
&+\int_s^t\T(t-r)\DR(\zh_{P_2}(r),r)\Delta\U_\zhP(r,s)\bd{w}dr.
\end{aligned}
\end{equation}
Given Assumption \ref{asp Frechet}, the boundedness defined by \eqref{bound_zhP}, and the boundedness of the operators $\T(t-s)$ and $\U_{\zhP,1}(r,s)$ defined by \eqref{U_zh_bound}, \eqref{UzpDel} allow us to compute
\begin{equation*}
\begin{split}
\max_{0\leq s\leq t} \|\Delta\U_\zhP(t,s)\|&\leq \delta_{T,\alpha} \iota_{DF}\delta_{U_{\zhP}} \int_0^t \|\Delta\zhP(s)\| ds  \\
&+ \delta_{T,\alpha}\delta_{DF}\int_0^t \max_{0\leq s\leq r}\|\Delta\U_\zhP(r,s)\|dr.
\end{split}
\end{equation*}
which by Gr\"{o}nwall inequality yields for all $t\in [0,t_f]$,
\begin{equation}
\label{bound_dzh2}
\|\DUzh(t,s)\|\leq c_2\int_0^t \|\Delta\zhP(s)\| ds
\end{equation}
for some $c_2>0$. From \eqref{bound_dzh} and \eqref{bound_dzh2}, it is concluded that for all $t\in [0,t_f]$,
\begin{equation}
\label{bound_dUzh}
\|\DUzh(t,s)\|\leq c_2c_1\int_0^t \max_{0\leq \tau\leq s}\|\Delta \bd{P}(\tau)\|ds.
\end{equation}

Note that by the boundness of $\bd{U}_{\zhP,i}(t,s), i=1,2$ given by \eqref{U_zh_bound}, as well as Gr\"{o}nwall's lemma, it can be shown that the perturbed evolution operators $\bd{U}_{P,i}(t,s), i=1,2$ are, for all $t\in [0,t_f]$, also bounded by 
\begin{equation}
\label{U_p_bound}
\max_{0\leq s\leq t}\|\bd{U}_{P,i}(t,s)\|\leq \delta_{U_P},
\end{equation}
for some $\delta_{U_P}>0$.

Let $\bd{Q}(t)=\bd{C}^\ast \bd{R}^{-1}(t)\bd{C}$. Similarly, from \eqref{perturbed_evol_operator}, it can be derived that for all $\bd{w}\in\mathcal{H}$,
\begin{equation}
\label{UpiDel}
\begin{split}
\Delta\U_P(t,s)\bd{w} &= \Delta\U_\zhP(t,s)\bd{w} - \int_s^t\Delta\U_\zhP(t,r)\G(\bd{P}_1(r))\Qb(r)\U_{P,1}(r,s)\bd{w}dr\\
&-\int_s^t \U_{\zhP,2}(t,r)\Delta\G(r)\Qb(r)\U_{P,1}(r,s)\bd{w}dr -\int_s^t\U_{\zhP,2}(t,r)\G(\bd{P}_2(r))\Qb(r)\Delta\U_P(r,s)\bd{w}dr.
\end{split}
\end{equation}
Using the boundedness defined by \eqref{U_zh_bound} and \eqref{U_p_bound} as well as the boundedness of the operator $\bd{Q}(t)$, by Gr\"{o}nwall's inequality, we can show that \eqref{UpiDel} leads to 
\begin{equation*}
\max_{0\leq s\leq t}\|\Delta\U_P(t,s)\|\leq c_3 \max_{0\leq s\leq t}\|\Delta\U_{\zhP}(t,s)\|+ c_4 \int_0^t \|\Delta\G(r)\|dr
\end{equation*}
for some constants $c_3,c_4>0$; substituting \eqref{bound_dUzh} into this inequality yields for all $t\in [0,t_f]$,
\begin{equation}
\label{bound_dUp}
\|\Delta\U_P(t,s)\| \leq c_3 c_2c_1\int_0^t \max_{0\leq \tau\leq s}\|\Delta \bd{P}(\tau)\|ds + c_4 \int_0^t \|\Delta\G(r)\|dr.
\end{equation}

\textbf{Step 2.2: }It this step, we use the bounds obtained in the above to find a bound for $\G(\bd{P}_1(t))-\G(\bd{P}_2(t)).$
Note that the operators $\G(\bd{P}_i(t))$ for $i=1,2$ satisfy
\begin{equation}
\label{Peq}
\G(\bd{P}_i(t))\bd{w}  = \U_{P,i}(t,0)\bd{P}_0\U_{\zhP,i}^\ast (t,0)\bd{w} +\int_{0}^t \U_{P,i}(t,s)\bd{W}(s)\U_{\zhP,i}^\ast (t,s)\bd{w}ds
\end{equation}
for all $\bd{w}\in\ss$.
From \eqref{Peq}, it can be concluded that the difference $\G(P_1(t))-\G(P_2(t))$ satisfies, for all $\bd{w}\in\mathcal{H}$,
\begin{equation*}
\begin{split}
\big(\G(\bd{P}_1(t))&-\G(\bd{P}_2(t))\big)\bd{w} =\Delta \U_P(t,0)\bd{P}_0 \U_{\zhP,1}^\ast(t,0)\bd{w} + \U_{P,2}(t,0)\bd{P}_0 \Delta \U_{\zhP}^\ast(t,0)\bd{w}\\
&+\int_0^t \big(\Delta \U_P(t,s)\bd{W}(s)\U_{\zhP,1}^\ast(t,s) + \U_{P,2}(t,s)\bd{W}(s)\DUzh^\ast(t,s)\big)\bd{w}ds.
\end{split} 
\end{equation*}
which given \eqref{U_zh_bound} and \eqref{U_p_bound} as well as the boundedness of the operators $\bd{P}_0$ and $\bd{W}(t)$ leads to
\begin{equation}
\label{eqq}
\|\G(\bd{P}_1(t))-\G(\bd{P}_2(t)\|\leq c_5 \max_{0\leq s\leq t}\|\Delta \U_P(t,s)\| + c_5 \max_{0\leq s\leq t}\|\DUzh(t,s)\|.
\end{equation}
for all $t\in [0,t_f]$, for some $c_5>0$.

Substituting \eqref{bound_dUzh} and \eqref{bound_dUp} into \eqref{eqq} results in
\begin{equation*}
\|\G(\bd{P}_1(t))-\G(\bd{P}_2(t)\|\leq (c_5c_2c_1)(c_3+1)\int_0^t\max_{0\leq \tau\leq s}\|\Delta \bd{P}(\tau)\|ds + c_5c_4 \int_0^t \|\Delta\G(s)\|ds 
\end{equation*}
for all $t\in [0,t_f]$, and employing Gr\"{o}nwall's inequality yields for all $t\in [0,t_f]$,
\begin{equation}
\|\G(\bd{P}_1(t))-\G(\bd{P}_2(t))\|\leq c\int_0^t\max_{0\leq \tau\leq s}\|\bd{P}_1(\tau)-\bd{P}_2(\tau)\|ds.
\label{formel_G}
\end{equation}
for some $c>0$.

Now, we show that $\G^n(\cdot)$ is a contraction mapping for large enough $n>0$. For this purpose, we use an induction argument to show that
\begin{equation}
\|\G^n(\bd{P}_1(t))-\G^n(\bd{P}_2(t))\|\leq \frac{(ct)^n}{n!}\max_{0\leq\tau\leq t}\|\bd{P}_1(\tau)-\bd{P}_2(\tau)\|.
\label{induction_formel}
\end{equation}
First, for $n=1$ the argument holds by \eqref{formel_G}. Now let $n\geq 2$ and assume \eqref{induction_formel} holds for $k\leq n-1$. By \eqref{formel_G}
\begin{equation*}
\begin{split}
\|\G^n(\bd{P}_1(t))-\G^n(\bd{P}_2(t))\|&\leq c\int_0^t \max_{0\leq \tau\leq s}\|\G^{n-1}(\bd{P}_1(\tau))-\G^{n-1}(\bd{P}_2(\tau))\|ds \\
&\leq \frac{c^n}{(n-1)!}\int_0^t s^{n-1} \max_{0\leq\tau\leq s}\|\bd{P}_1(\tau)-\bd{P}_2(\tau)\|ds\\
&\leq \frac{c^n}{(n-1)!} \max_{0\leq \tau\leq t}\|\bd{P}_1(\tau)-\bd{P}_2(\tau)\|\int_0^t s^{n-1}ds\\
&\leq \frac{(ct)^n}{n!}\max_{0\leq \tau\leq t}\|\bd{P}_1(\tau)-\bd{P}_2(\tau)\|
\end{split}
\end{equation*}
which proves \eqref{induction_formel}. By taking the maximum on $[0,t_f]$ it follows that
\begin{equation*}
\max_{[0,t_f]}\|\G^n(\bd{P}_1(t))-\G^n(\bd{P}_2(t))\|\leq\frac{(ct_f)^n}{n!}\max_{[0,t_f]}\|\bd{P}_1(t)-\bd{P}_2(t)\|.
\end{equation*}
Therefore, for $n$ large enough $\frac{(ct_f)^n}{n!}<1$ and $\G^n(\cdot)$ is a contraction on $\mathbb{P}_{t_f}(\delta_p)$. 
Thus, by the Contraction Mapping Theorem,  see for example,  \cite[Lemma 5.4-3]{kreyszig}, there is a unique fixed point on $\mathbb{P}_{t_f}(\delta_p), $ completing the proof.
\end{proof}

This implies the existence of a unique mild solution of the observer dynamics \eqref{OBD} with observer gain $\filt (t)$  defined by \eqref{filter} where the linear operator $\bd{P}(t)$ is the solution  of the Riccati equation  \eqref{ORiccati}-\eqref{add1}. 
It may be convenient to adjust the rate at which the estimation error converges to zero, which will be discussed in more detail in the next section. To this end, in the Riccati coupled equations \eqref{add1},\eqref{add2}, and \eqref{ORiccati}, the operator $\bd{A}$ is replaced by $\bd{A}+\alpha\bd{I}$ where $\alpha>0$ and $\bd{I}$ is the identity operator. In other words, let $\Ta(t)$ be the $C_0$-semigroup generated by $\bd{A}+\alpha\bd{I}$ and for
$\hat{\z}_P(t) \in C([0,t_f]; \ss)$ and $\w \in \ss $ defines  a evolution operator 
\begin{equation}
\U_{\alpha} ( t,s)\bd{w}=\Ta(t-s)\bd{w}
+\int_{s}^{t}\Ta(t-r)\bd{D}\bd{F}(\hat \z_P (r),r) \U_{\alpha} (r,s) \bd{w} dr .
\label{add1a}
\end{equation}
The integral Riccati equation takes the form
\begin{align}
\label{add2a}
\bd{U}_{P,\alpha}(t,s)\bd{w}&=\bd{U}_{\alpha}(t,s)\bd{w}  -\int_{s}^{t}\bd{U}(t,r)\bd{P}(r)\bd{C}^*\bd{R}^{-1}(r)\bd{C}\bd{U}_{P,\alpha}(r,s)\bd{w}dr. 
\\
\label{ORiccatia}
\bd{P}(t)\bd{w}&=\bd{U}_{P,\alpha}(t,0)\bd{P}_0\bd{U}_{P,\alpha}^\ast(t,0)\bd{w}   +\int_0^t\bd{U}_{P,\alpha}(t,s)\left(\bd{W}(s)+\bd{P}(s)\bd{C}^*\bd{R}^{-1}(s)\bd{C}\bd{P}(s)\right)\bd{U}_{P,\alpha}^\ast(t,s)\bd{w}ds. 
\end{align}
Note that \eqref{add1a}-\eqref{ORiccatia} satisfy the condition of the conditions of Theorem \ref{well-posed_thm_coupling} and thus are well-posed on every bounded time interval $[0,t_f]$.

\section{Dynamics of the estimator}

Now the estimator defined by the solution to  the coupled integral Riccati equations discussed in the previous sections is analyzed as a dynamical system.
The following definition is standard.
\begin{defn}
An evolution operator $\bd{Y}(t,s)$ is {\em exponentially stable} if there exists $M \geq 0 , $ $\alpha >0$ such that for all $t\geq s ,$
$$\| Y(t,s) \| \leq M e^{-\alpha (t-s) } .$$
\end{defn}

Let $\bd{A}_c(t):\mathcal{D}(\bd{A}(t))\mathcal{H}\to\mathcal{H}$ be a linear operator that generates an evolution operator $\bd Y(t,s)$  and consider bounded  linear operators $\bd{B}_c(t) \in  \mathcal{C} \left( [0,\infty); \mathcal L (\mathcal{X}_B, \mathcal H)  \right)$  and $\bd{C}_c(t) \in  \mathcal{C} \left([0,\infty ); \mathcal L ( \mathcal H , \mathcal{X}_C) \right) $  where $\mathcal{X}_B$ and $\mathcal{X}_C$ are Hilbert spaces.    

\begin{defn}
\begin{enumerate}
\item
\label{Ad-det}
For a linear bounded and continuous in time operator $L (t)$ define 
 the evolution operator $\bd{Y}_{L } (t,s)$ satisfying
\begin{align}
\bd{Y}_{L }(t,s)\bd{w}&=\bd{Y} (t,s)\bd{w} \label{eq-YK0} \\
&+\int_s^t\bd{Y} (t,r) L (r)\bd{C}_c(r)\bd{Y}_{L}(r,s)\bd{w}dr, \quad \bd{w}\in\mathcal{H} .  \nonumber
\end{align}
The pair $(\bd{A}_c (t),\bd{C}_c(t))$ is {\em uniformly detectable} if there exists a linear uniformly  bounded and continuous in time operator $
L (t)$ such that  the evolution operator $\bd{Y}_{L}(t,s)$ is exponentially stable . 
\item
\label{Ad-stab}
For any linear uniformly bounded and continuous in time operator $
K(t) $ 
 define the evolution operator $\bd{Y}_{ K}(t,s)$ satisfying
\begin{align}
\bd{Y}_{K}(t,s)\bd{w}&=\bd{Y} (t,s)\bd{w}   \label{eq-YK1} \\
&+\int_s^t\bd{Y} (t,r)\bd{B}_c(r)\bd{K}(r)\bd{Y}_{K}(r,s)\bd{w}dr,  \quad \bd{w}\in\mathcal{H}. \nonumber
\end{align}
The pair $(\bd{A}_c(t),\bd{B}_c(t))$ is  {\em uniformly stabilizable} if there exists  a linear uniformly bounded and continuous in time operator $
\bd{K}(t)$ such that the evolution operator $\bd{Y}_{K}(t,s)$ is exponentially stable . 
\end{enumerate}
\end{defn}

Duality with the  corresponding linear quadratic control problem will be used to  establish the main result. First, we introduce an optimal control problem. Let 
$\bd{P}_0,$  $\bd{W}_{c}(t)$ and $\bd{R}_{c}(t)$  be operators satisfying  Assumption
 \ref{asp operators}. 
 For $t_f > 0 $ consider the cost function
\begin{equation}
\label{Cost}
\begin{aligned}
J(\bd{u}(t);\bd{z}_{adj,0},0)&=(\bd{z}_{adj}(t_f),\bd{P}_0\bd{z}_{adj}(t_f))_{\mathcal{H}}\\
&+\int_{0}^{t_f}\big((\bd{z}_{adj}(r),\bd{W}_{c}(r)\bd{z}_{adj}(r))_{\mathcal{H}}
+(\bd{u}(r),\bd{R}_{c}(r)\bd{u}(r))_{\mathcal{X}_B}\big)dr.
\end{aligned}
\end{equation}
subject to the dynamics
\begin{equation}
\label{Osysd}
\frac{\partial z_{adj}(t)}{\partial t} = \bd{A}_c (t) \bd{z}_{adj} (t) + \bd{B}_c(t) u(t) ,  \quad \bd{z}_{adj} (0) =\bd{z}_{adj,0}
\end{equation}
with mild solution
\begin{equation}
\label{Osys}
\begin{aligned}
\bd{z}_{adj}(t)&=\bd{Y} (t,0)\bd{z}_{adj,0}+\int_{0}^t\bd{Y} (t,r)\bd{B}_c(r)\bd{u}(r)dr.
\end{aligned}
\end{equation} 

\begin{thm}
\label{thm-lq}
 \cite[Thms. 2.1- 2.3]{CRiccati-1976 }
For any $z_0 \in \ss , $ the  integral equations
\begin{align}
\label{ORiccatiD}
\bd{P}_{t_f}(0)\bd{z_0}&=\Ykc^*(t_f,0)\bd{P}_0\Ykc(t_f,0)\bd{z_0}\\
& \quad +\int_{0}^{t_f}\Ykc^*(r,0)(\bd{W}_{c}(r)
+\bd{P}_{t_f}(r)\bd{B}_c(r)\bd{R}_{c}^{-1}(r)\bd{B}_c^*(r)\bd{P}_{t_f}(r))\Ykc (r,0)\bd{z_0}dr,  \nonumber\\
\Ykc(t,s)\bd{z_0}& =\bd{Y}(t,s)\bd{z_0}
-\int_s^t\bd{Y}(t,r)\bd{B}_c(r)\bd{R}_{c}^{-1}(r)\bd{B}_c^*(r)\bd{P}_{t_f}(r)\Ykc (r,s) z_0 dr,
\label{Yp}
\end{align}
have a unique solution $\bd{P}_{t_f}(\cdot ) \in \mathbb B^\infty \left( [0 , t_f]; \mathcal{L}(\mathcal{H})\right)$.
Furthermore,  the control
$$\bd{u}_{opt}(t)=-\bd{R}_{c}^{-1}(t)\bd{B}_c^*(t)\bd{P}_{t_f}(t)\bd{z}_{adj}(t),$$
minimizes the cost \eqref{Cost} and the minimum cost is 
\begin{equation}
\label{CostP}
\begin{aligned}
J(\bd{u}_{opt}(t);\bd{z}_{adj,0},0)&=(\bd{P}_{t_f}(0)\bd{z}_{adj,0},\bd{z}_{adj,0})_{\mathcal{H}}\end{aligned}
\end{equation}
\end{thm}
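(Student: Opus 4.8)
The plan is to prove the theorem in two stages. \emph{Stage 1:} show that the coupled pair \eqref{ORiccatiD}--\eqref{Yp} has a unique solution $\bd{P}_{t_f}(\cdot)$ that is moreover self-adjoint and positive for each $t$. \emph{Stage 2:} show by a completing-the-square argument along the mild trajectory \eqref{Osys} that $J(\bd{u})\ge(\bd{P}_{t_f}(0)\bd{z}_{adj,0},\bd{z}_{adj,0})_{\mathcal{H}}$ for every admissible control $\bd{u}$, with equality attained by the feedback control $\bd{u}_{opt}$, which yields both the optimality claim and the value formula \eqref{CostP}.

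For Stage 1 I would reduce to the filtering Riccati result already recorded as Theorem \ref{thm Riccati} via the classical duality between control and filtering. Reverse time: for $0\le s\le t\le t_f$ put $\bd{V}(t,s):=\bd{Y}^\ast(t_f-s,t_f-t)$; taking adjoints in the cocycle identity of $\bd{Y}$ shows that $\bd{V}$ is again an evolution operator on $\Delta(t_f)$, and the pointwise-in-time hypotheses of Assumption \ref{asp operators} on $\bd{P}_0,\bd{W}_c,\bd{R}_c$ are invariant under $t\mapsto t_f-t$. Setting $\tilde{\bd{P}}(t):=\bd{P}_{t_f}(t_f-t)$ and using $(\bd{P}\bd{C}^\ast\bd{R}^{-1}\bd{C})^\ast=\bd{C}^\ast\bd{R}^{-1}\bd{C}\bd{P}$ for self-adjoint $\bd{P},\bd{R}^{-1}$, the system \eqref{ORiccatiD}--\eqref{Yp} transforms into one of exactly the form \eqref{LORiccati} with perturbed operator \eqref{Ladd2}, now with $\bd{V}$ in place of the evolution operator, $\bd{B}_c^\ast(t_f-\cdot)$ in place of $\bd{C}$, and $\bd{R}_c(t_f-\cdot),\bd{W}_c(t_f-\cdot)$ in place of $\bd{R},\bd{W}$. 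Theorem \ref{thm Riccati} (equivalently \cite[Thms.~3.1, 3.3]{CRiccati-1976}) then gives a unique positive, self-adjoint $\tilde{\bd{P}}(t)\in\mathcal{C}([0,t_f],\mathcal{L}(\mathcal{H}))$; undoing the time reversal produces $\bd{P}_{t_f}(\cdot)$ with the stated regularity, and $\Ykc(t,s)$ of \eqref{Yp} is recognised as the closed-loop evolution operator generated by $\bd{A}_c(t)-\bd{B}_c(t)\bd{R}_c^{-1}(t)\bd{B}_c^\ast(t)\bd{P}_{t_f}(t)$.

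For Stage 2, fix an admissible $\bd{u}$, let $\bd{z}_{adj}$ be the mild solution \eqref{Osys}, and consider $V(t):=(\bd{z}_{adj}(t),\bd{P}_{t_f}(t)\bd{z}_{adj}(t))_{\mathcal{H}}$. Using the differential Riccati equation equivalent to \eqref{ORiccatiD}--\eqref{Yp}, namely $\dot{\bd{P}}_{t_f}=-\bd{A}_c^\ast\bd{P}_{t_f}-\bd{P}_{t_f}\bd{A}_c-\bd{W}_c+\bd{P}_{t_f}\bd{B}_c\bd{R}_c^{-1}\bd{B}_c^\ast\bd{P}_{t_f}$ with terminal value $\bd{P}_{t_f}(t_f)=\bd{P}_0$, together with the self-adjointness of $\bd{P}_{t_f}$ and $\bd{R}_c$, a short calculation gives, with $\bd{u}_{opt}(t):=-\bd{R}_c^{-1}(t)\bd{B}_c^\ast(t)\bd{P}_{t_f}(t)\bd{z}_{adj}(t)$,
\[
\dot V(t)=-(\bd{z}_{adj}(t),\bd{W}_c(t)\bd{z}_{adj}(t))_{\mathcal{H}}+(\bd{R}_c(t)\bd{u}_{opt}(t),\bd{u}_{opt}(t))_{\mathcal{X}_B}-2(\bd{R}_c(t)\bd{u}_{opt}(t),\bd{u}(t))_{\mathcal{X}_B}.
\]
Integrating over $[0,t_f]$ with $V(t_f)=(\bd{z}_{adj}(t_f),\bd{P}_0\bd{z}_{adj}(t_f))_{\mathcal{H}}$ and $V(0)=(\bd{z}_{adj,0},\bd{P}_{t_f}(0)\bd{z}_{adj,0})_{\mathcal{H}}$, substituting into \eqref{Cost} and regrouping yields the completed-square identity
\[
J(\bd{u};\bd{z}_{adj,0},0)=(\bd{P}_{t_f}(0)\bd{z}_{adj,0},\bd{z}_{adj,0})_{\mathcal{H}}+\int_0^{t_f}\big(\bd{R}_c(r)(\bd{u}(r)-\bd{u}_{opt}(r)),\,\bd{u}(r)-\bd{u}_{opt}(r)\big)_{\mathcal{X}_B}\,dr.
\]
Since $\bd{R}_c$ is uniformly coercive (Assumption \ref{asp operators}), the integral is nonnegative, so $J(\bd{u})\ge(\bd{P}_{t_f}(0)\bd{z}_{adj,0},\bd{z}_{adj,0})_{\mathcal{H}}$ for all $\bd{u}$, with equality precisely when $\bd{u}=\bd{u}_{opt}$; for that choice the closed-loop mild solution is, by uniqueness, $\bd{z}_{adj}(t)=\Ykc(t,0)\bd{z}_{adj,0}$, whence $\bd{u}_{opt}(t)=-\bd{R}_c^{-1}(t)\bd{B}_c^\ast(t)\bd{P}_{t_f}(t)\bd{z}_{adj}(t)$ is the unique optimal control and \eqref{CostP} holds.

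The main obstacle is rigour in Stage 2: in the mild-solution framework neither $\bd{z}_{adj}$ nor the operator map $\bd{P}_{t_f}(\cdot)$ need be differentiable, so ``$\dot V$'' is only formal. I would resolve this either by Yosida approximation --- replace $\bd{A}_c$ by its bounded Yosida approximations, so that the corresponding states become classical solutions and the identity above holds genuinely, then pass to the limit using strong continuity of the approximating evolution operators and the resolvent identity --- or, following \cite{CRiccati-1976}, by inserting the integral representations \eqref{Osys}, \eqref{ORiccatiD}, \eqref{Yp} directly and reorganising the resulting iterated integrals with Fubini's theorem; either route is routine but long. The self-adjointness and positivity of $\bd{P}_{t_f}$ furnished by Stage 1 are used throughout: they make the quadratic forms in \eqref{Cost} and \eqref{CostP} meaningful and fix the sign of the completed-square remainder.
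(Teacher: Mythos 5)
This theorem is not proved in the paper at all: it is quoted verbatim from the reference (Curtain--Pritchard, Thms.~2.1--2.3 of \cite{CRiccati-1976}), so there is no in-paper argument to match yours against. Your reconstruction is essentially the classical proof and is sound in outline. Stage~1, the reduction by time reversal and adjoints to the filtering Riccati equation already recorded as Theorem~\ref{thm Riccati}, is legitimate --- indeed it is exactly the duality the paper itself invokes later, in the proof of Theorem~\ref{EXp_stb_evl}, where $\Ykc(t,s)=\U_{P,\alpha}^*(t_f-s,t_f-t)$ is used in the opposite direction; the cited reference organizes things the other way round (control Riccati proved directly, filtering version obtained by duality), but either direction works since Assumption~\ref{asp operators} is invariant under $t\mapsto t_f-t$. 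One point in Stage~1 needs an extra (standard) lemma you do not mention: taking adjoints in \eqref{Yp} and reversing time produces the perturbed evolution operator in the ``reversed'' integral form, with $\bd{U}_P$ on the left of the perturbation and $\bd{U}$ on the right, whereas \eqref{Ladd2} has them the other way; the two integral equations characterize the same perturbed mild evolution operator, but that equivalence has to be stated (it is in \cite{CRiccati-1976}), and strong continuity of the time-reversed adjoint family should be justified within the class of mild evolution operators. Also note that \eqref{ORiccatiD} as displayed fixes the left endpoint at $0$, so your transformation implicitly uses the version with general initial time $s$, which is what the reference actually proves. Stage~2 is the standard completion of squares; you correctly identify that in the mild framework neither $\bd{z}_{adj}$ nor $\bd{P}_{t_f}$ is differentiable, so the differential Riccati identity is only formal, and your proposed fixes (Yosida approximation, or manipulating the integral equations directly with Fubini as in \cite{CRiccati-1976}) are exactly how this is made rigorous. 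With those details filled in, your argument proves the quoted statement, at the cost of redoing work the paper deliberately outsources to the literature.
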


The optimal control problem  is now considered on  the infinite time interval. Assume that $A_c (t), B_c (t) $ and $C_c (t)$ are all linear operators defined for $t \in \mathbb R $ and also that $B_c (t) , C_c (t)$ are bounded uniformly in time. 
 Let 
$\bd{P}_0\in\mathcal{L}(\mathcal{H})$,   $\bd{W}_{c}(t)\in\mathcal{C}(\mathbb{R},\mathcal{L}(\ss))$ and $\bd{R}_{c}(t)\in\mathcal{C}(\mathbb{R},\mathcal{L}(\mathcal{Y}))$  be operators satisfying  Assumption
 \ref{asp operators}. Both $\bd{W}_{c}(t)$ and $\bd{R}_{c}(t)$ are in addition  assumed to be uniformly bounded uniformly in time and $\bd R_c (t)$ is coercive uniformly in time. 
The cost function becomes
\begin{equation}
\label{J-inf}
J_{\infty}(\bd{u}(t);\bd{z}_{adj,0},0)=\lim_{t_f\rightarrow\infty}\int_{0}^{t_f}\big((\bd{z}_{adj}(r),\bd{W}_{c}(r)\bd{z}_{adj}(r))_{\mathcal{H}}+(\bd{u}(r),\bd{R}_{c}(r)\bd{u}(r))_{\mathcal{X}_B}\big)dr
\end{equation}
subject to the same dynamics \eqref{Osys}.

\begin{thm}
\label{J-infty}
\cite[Thms. 4.2-4.4]{Gibson}
Assume that  for every  initial condition $\bd{z}_{adj,0}\in\mathcal{H}$ in \eqref{Osys}, there exists a stongly measurable control input $\bd{u}(t)$  such that the cost function  \eqref{J-inf} is finite. Let  $\bd{P}_{t_f}(\cdot)$ be the solution to the Riccati equation  \eqref{ORiccatiD} and \eqref{Yp} with $P_0=0$. Then there exists a unique nonnegative and self-adjoint  operator $\bd{P}_{\infty}(0)\in\mathcal{L}(\mathcal{H})$ such that, for any $z_0 \in \ss , $
\begin{align}
\min_{\bd{u}}J_{\infty}(\bd{u}(t);\bd{z}_{adj,0},0)&=(\bd{P}_{\infty}(0)\bd{z}_{adj,0},\bd{z}_{adj,0})_{\mathcal{H}} \,  , \label{eq-Gibson1}\\
\lim_{t_f \to \infty} \bd{P}_{t_f}(0)\bd{z}_{adj,0}  &= \bd{P}_{\infty}(0)\bd{z}_{adj,0} .  \label{eq-Gibson-2}
\end{align}
 for any $z_0 \in \ss , $ $0 \leq s \leq t . $
Furthermore,  for any $0 \leq s \leq t , $ $P_\infty(s) $ solves
\begin{align}
\label{eq-Pinf}
\bd{P}_{\infty}(s)\bd{z_0}&=Y_{-B_c K_\infty}^*(t,s)\bd{P_\infty}(t)Y_{-B_c K_\infty} (t,s)\bd{z_0}\\
& \quad +\int_{s}^{t}Y_{-B_c K_\infty}^*(r,s)(\bd{W}_{c}(r)
+\bd{P}_{\infty}(r)\bd{B}_c(r)\bd{R}_{c}^{-1}(r)\bd{B}_c^*(r)\bd{P}_{\infty}(r))Y_{-B_c K_\infty} (r,s)\bd{z_0}dr,  \nonumber\\
Y_{-B_c K_\infty}(t,s)\bd{z_0}& =\bd{Y}(t,s)\bd{z_0}
-\int_s^t\bd{Y}(t,r)\bd{B}_c(r)\bd{R}_{c}^{-1}(r)\bd{B}_c^*(r)\bd{P}_{\infty}(r)Y_{-B_c K_\infty} (r,s) z_0 dr,
\label{Ybk}
\end{align}
and $Y_{-B_c K_\infty}(t,s) $ is an evolution operator. 
\end{thm}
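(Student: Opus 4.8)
The plan is to reproduce the classical monotone-limit construction for the infinite-horizon LQR problem on Hilbert space, taking the finite-horizon theory of Theorem~\ref{thm-lq} as the point of departure. Throughout, $\bd{P}_0=0$ is fixed, so by \eqref{CostP} the quadratic form $(\bd{P}_{t_f}(0)\bd{z}_{adj,0},\bd{z}_{adj,0})_{\ss}$ equals the minimum of the finite-horizon cost over $[0,t_f]$ for the dynamics \eqref{Osys}. More generally, by the dynamic programming principle the same theory applied on a subinterval $[s,t_f]$ gives that $(\bd{P}_{t_f}(s)\bd{w},\bd{w})_{\ss}$ is the minimum over controls on $[s,t_f]$ of $\int_s^{t_f}\big((\bd{z}_{adj}(r),\bd{W}_c(r)\bd{z}_{adj}(r))_{\ss}+(\bd{u}(r),\bd{R}_c(r)\bd{u}(r))_{\mathcal{X}_B}\big)dr$ started from $\bd{z}_{adj}(s)=\bd{w}$, and $\bd{P}_{t_f}(s)$ is nonnegative and self-adjoint.

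The first step is to prove that $t_f\mapsto\bd{P}_{t_f}(s)$ is nondecreasing in the quadratic-form order and uniformly norm-bounded. Monotonicity follows because the cost integrand in \eqref{J-inf} is nonnegative: for $t_f'>t_f$ the optimal control on $[s,t_f']$, restricted to $[s,t_f]$, is a candidate for the shorter-horizon problem and has cost no smaller than $(\bd{P}_{t_f}(s)\bd{w},\bd{w})_{\ss}$, whence $(\bd{P}_{t_f}(s)\bd{w},\bd{w})_{\ss}\le(\bd{P}_{t_f'}(s)\bd{w},\bd{w})_{\ss}$. The finite-cost hypothesis furnishes, for each $\bd{w}$, an admissible input with $J_\infty(\bd{u};\bd{w},0)<\infty$; truncating it to $[s,t_f]$ shows $(\bd{P}_{t_f}(s)\bd{w},\bd{w})_{\ss}\le J_\infty(\bd{u};\bd{w},0)$ uniformly in $t_f$, and the uniform boundedness principle then bounds $\|\bd{P}_{t_f}(s)\|$ uniformly in $t_f$. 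A bounded, monotone family of nonnegative self-adjoint operators converges strongly, so $\bd{P}_{t_f}(s)$ has a strong limit $\bd{P}_\infty(s)$ that is again bounded, nonnegative and self-adjoint; taking $s=0$ this is \eqref{eq-Gibson-2}. Letting $t_f\to\infty$ in $(\bd{P}_{t_f}(0)\bd{w},\bd{w})_{\ss}\le J_\infty(\bd{u};\bd{w},0)$ gives $(\bd{P}_\infty(0)\bd{w},\bd{w})_{\ss}\le\inf_{\bd{u}}J_\infty(\bd{u};\bd{w},0)$, one half of \eqref{eq-Gibson1}.

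For the reverse inequality I would use the feedback control $\bd{u}=-\bd{R}_c^{-1}\bd{B}_c^*\bd{P}_\infty\bd{z}_{adj}$. Since $\bd{B}_c,\bd{C}_c,\bd{R}_c^{-1}$ are bounded and continuous in time and $\bd{P}_\infty$ is bounded, the operator $\bd{B}_c\bd{R}_c^{-1}\bd{B}_c^*\bd{P}_\infty$ is a bounded continuous-in-time perturbation of $\bd{A}_c(t)$, so by the perturbation construction used throughout the paper (the evolution-operator analogue of Theorem~\ref{thm mild evol op}, together with a Gr\"onwall uniqueness estimate) equation \eqref{Ybk} defines a genuine evolution operator $Y_{-B_c K_\infty}(t,s)$. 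A completion-of-squares identity along the finite-horizon optimal trajectory shows its cost equals $(\bd{P}_{t_f}(0)\bd{w},\bd{w})_{\ss}$; passing to the limit $t_f\to\infty$ using the strong convergence and nonnegativity of the integrands produces an admissible control with $J_\infty(\bd{u};\bd{w},0)=(\bd{P}_\infty(0)\bd{w},\bd{w})_{\ss}$, giving \eqref{eq-Gibson1} and the uniqueness of the operator representing the value. Finally, \eqref{eq-Pinf}–\eqref{Ybk} are obtained by writing the finite-horizon integral Riccati equation of Theorem~\ref{thm-lq} in its backward form on $[s,t]$ with terminal data $\bd{P}_0=0$ (dynamic programming), then passing $t_f\to\infty$; the perturbed evolution operators $\Ykc(\cdot,\cdot)$ converge to $Y_{-B_c K_\infty}(\cdot,\cdot)$ by Gr\"onwall applied to the difference of their defining integral equations, and the strong limit can be moved inside the integral once convergence is known to be locally uniform in time.

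I expect the two nontrivial limit passages to be the main obstacles: (i) upgrading the strong, pointwise-in-$t$ convergence $\bd{P}_{t_f}(\cdot)\to\bd{P}_\infty(\cdot)$ to convergence that is uniform on compact time intervals, which is needed to interchange the limit with the integral in the Riccati equation and which I would obtain from monotonicity plus a $t_f$-uniform equicontinuity estimate for $r\mapsto\bd{P}_{t_f}(r)$ coming from the integral Riccati equation and the uniform bounds (a Dini-type argument in the strong topology); and (ii) justifying that the $\bd{P}_\infty$-feedback control is admissible and actually realizes the value, i.e.\ making the limiting completion-of-squares argument rigorous while controlling the cross terms. All remaining estimates are routine Gr\"onwall-type bounds of the kind already used in the proof of Theorem~\ref{well-posed_thm_coupling}.
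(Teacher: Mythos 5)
The paper does not prove this theorem at all: it is quoted, with its proof, from the cited reference \cite[Thms.~4.2--4.4]{Gibson}, so there is no in-paper argument to compare against line by line. Your outline is essentially the classical monotone-limit construction on which those cited theorems rest --- monotonicity of $t_f\mapsto \bd{P}_{t_f}(s)$ and a $t_f$-uniform bound from the finite-cost hypothesis (via truncation and the uniform boundedness principle), strong convergence of a bounded monotone family of nonnegative self-adjoint operators, optimality of the $\bd{P}_\infty$-feedback by completion of squares, and passage to the limit in the integral Riccati equation --- and as a sketch it is sound. Two remarks on the points you flag. First, on obstacle (i): locally uniform (Dini-type) convergence of $\bd{P}_{t_f}(\cdot)$ is not actually required; if you write the difference of the two perturbed evolution-operator equations so that the operator difference $\bd{P}_{t_f}(r)-\bd{P}_\infty(r)$ acts on the \emph{fixed} limiting trajectory $Y_{-B_cK_\infty}(r,s)z_0$, that term goes to zero by dominated convergence using only strong convergence and the uniform norm bound, and Gr\"onwall absorbs the remaining term in which the uniformly bounded perturbation acts on the difference of evolution operators; this is how the cited source handles the limit, and it avoids the equicontinuity argument you anticipate. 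Second, to define and characterize $\bd{P}_\infty(s)$ for $s>0$ (as in \eqref{eq-Pinf}) your monotone-limit argument needs the finite-cost hypothesis from every initial time $s$, not only from $s=0$; this is part of the standing assumptions in the cited reference and should be stated explicitly if you carry out the proof yourself.
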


The linear operator $A_c(t)$ can be reformulated as 
$$A_c(t)=A_c(t)+L(t)W^{1/2}(t)-L(t)W^{1/2}(t)$$ where $L(t)$ is such that the evolution operator generated by $A_c(t)+L(t)W^{1/2}(t)$ is exponentially stable in the sense of definition \eqref{eq-YK0}. Let $Y_{L}(t,s)$ be the evolution operator generated by $A_c (t)+L(t)W^{1/2}(t)$. 
Since the dynamics \eqref{Osysd} can be represented as 
$$
\frac{\partial z_{adj}(t)}{\partial t}  = \big(A_c(t)+L(t)W^{1/2}(t)-L(t)W^{1/2}(t)\big)z_{adj}(t) + \bd{B}_c(t) u(t) ,  \quad \bd{z}_{adj} (s) =\bd{z}_{adj,0} ,
$$
$z_{adj} $ can be written 
\begin{equation}
\label{Osys-r}
z_{adj}(t)=Y_L(t,s)z_{adj,0}+\int_s^tY_L(t,r)\left(-L(r)W^{1/2}(r)z_{adj}(r)+B_c(r)u(r)\right)dr.
\end{equation}
This formulation will be useful in proofs of the following theorems.

\begin{thm}
\label{Y-Convergence}
Consider the  infinite-time optimal control problem \eqref{J-inf} and finite-time optimal control problem \eqref{Cost}  with dynamics \eqref{Osys}. 
 Assume that $(\bd{A}_c (t),\bd{W}^{1/2}_{c}(t))$ is uniformly detectable. Let $Y_{-B_cK_{\infty}}(t,s)$ be the evolution operator generated by the perturbation of $A_c (t) $ by  $- B_c (t) R_c^{-1}(t)B_c^*(t)P_{\infty}(t)$, \eqref{Ybk}, and similarly $Y_{-B_cK_{t_f}}(t,s)$ is the evolution operator generated by the perturbation of $A_c (t) $  by $- B_c (t) R_c^{-1}(t)B_c^*(t)P_{t_f}(t)$,\eqref{Yp}. For each $z_0\in\mathcal{H}$,
\begin{equation}
\label{Y-conv}
\lim_{t_f \to \infty } Y_{-B_cK_{t_f}}(t,s)z_0= Y_{-B_cK_{\infty}}(t,s)z_0 
\end{equation}  
 uniformly in  time for $0\leq s<t\leq\infty .$
\end{thm}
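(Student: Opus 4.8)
The plan is to control the difference through the perturbation (variation of parameters) identity relating the two closed-loop evolution operators. Writing $F_{t_f}(r)=B_c(r)R_c^{-1}(r)B_c^*(r)P_{t_f}(r)$ and $F_\infty(r)=B_c(r)R_c^{-1}(r)B_c^*(r)P_\infty(r)$, so that by \eqref{Yp} and \eqref{Ybk} $\Ykc(t,s)$ and $Y_{-B_cK_\infty}(t,s)$ are the evolution operators generated by $A_c(t)-F_{t_f}(t)$ and $A_c(t)-F_\infty(t)$, this identity reads
\[
\Ykc(t,s)z_0-Y_{-B_cK_\infty}(t,s)z_0=\int_s^t Y_{-B_cK_\infty}(t,r)\big(F_\infty(r)-F_{t_f}(r)\big)\Ykc(r,s)z_0\,dr .
\]
The obstacle is that Theorem~\ref{J-infty} and \eqref{eq-Gibson-2} only give strong convergence $P_{t_f}(t)z_0\to P_\infty(t)z_0$, which does not control $\big(F_\infty(r)-F_{t_f}(r)\big)\Ykc(r,s)z_0$ uniformly, since $\{\Ykc(r,s)z_0:t_f>r\}$ is bounded but not precompact in $\mathcal{H}$. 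The heart of the proof is therefore to upgrade this to an operator-norm bound with an explicit exponential rate in $t_f$, which is possible precisely because the terminal weight is taken to be $P_0=0$ as in Theorem~\ref{J-infty}.

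First I would collect the uniform ingredients. Under the standing assumptions and uniform detectability of $(A_c(t),W_c^{1/2}(t))$ the hypotheses of Theorem~\ref{J-infty} hold uniformly in the initial time, so $\|P_\infty(t)\|\le c_P$ for all $t$; since $P_0=0$ the finite-horizon value is monotone in $t_f$ with $0\le P_{t_f}(t)\le P_\infty(t)$, hence $\|P_{t_f}(t)\|\le c_P$ and $\|F_{t_f}(t)\|,\|F_\infty(t)\|\le c_F$, all uniformly in $t$ and $t_f$. The Lyapunov structure encoded in \eqref{Yp} (the map $r\mapsto(P_{t_f}(r)\Ykc(r,s)z_0,\Ykc(r,s)z_0)$ is nonincreasing) together with \eqref{CostP} gives the $t_f$-uniform energy bound $\int_s^{t_f}\big(\|W_c^{1/2}(r)\Ykc(r,s)z_0\|^2+\|F_{t_f}(r)\Ykc(r,s)z_0\|^2\big)dr\le c_P\|z_0\|^2$. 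Feeding this into the detectability representation \eqref{Osys-r} of $r\mapsto\Ykc(r,s)z_0$ (with $L(r)$ the injection making $Y_L$ exponentially stable) and applying the Cauchy-Schwarz and Young inequalities shows that $r\mapsto\Ykc(r,s)z_0$ is bounded in $\mathcal{H}$ and in $L^2(s,t_f;\mathcal{H})$ uniformly in $s$ and $t_f$; a Datko-type argument (see e.g. \cite{pazy}) then yields $\|\Ykc(t,s)\|\le M'e^{-\alpha'(t-s)}$ with $M',\alpha'$ independent of $t_f$, and the same computation with $t_f=\infty$ gives $\|Y_{-B_cK_\infty}(t,s)\|\le Me^{-\alpha(t-s)}$.

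Next I would prove operator-norm, exponentially fast convergence of the Riccati solutions. Because $P_0=0$, the finite-horizon cost on $[t,t_f]$ is a truncation of the infinite-horizon cost, and a dynamic-programming comparison---use the infinite-horizon optimal control restricted to $[t,t_f]$ for the inequality $P_{t_f}(t)\le P_\infty(t)$, and prolong the finite-horizon optimal control past $t_f$ by the infinite-horizon optimal feedback for the reverse bound---gives, for every $z_0$,
\[
0\le\big((P_\infty(t)-P_{t_f}(t))z_0,z_0\big)\le\big(P_\infty(t_f)\Ykc(t_f,t)z_0,\Ykc(t_f,t)z_0\big)\le c_P\|\Ykc(t_f,t)\|^2\|z_0\|^2 .
\]
Since $P_\infty(t)-P_{t_f}(t)\ge0$ is self-adjoint its norm is the supremum of this quadratic form over $\|z_0\|=1$, so with the first step one gets $\|P_\infty(t)-P_{t_f}(t)\|\le c_P(M')^2e^{-2\alpha'(t_f-t)}$ and hence $\|F_\infty(t)-F_{t_f}(t)\|\le c_F'e^{-2\alpha'(t_f-t)}$ for $0\le t\le t_f$, with $c_F'$ independent of $t_f$.

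Finally I would substitute these bounds into the perturbation identity to obtain
\[
\|\Ykc(t,s)z_0-Y_{-B_cK_\infty}(t,s)z_0\|\le Mc_F'M'\|z_0\|\int_s^t e^{-\alpha(t-r)}e^{-2\alpha'(t_f-r)}e^{-\alpha'(r-s)}\,dr ,
\]
and bound the elementary integral by $(\alpha+\alpha')^{-1}e^{-\alpha'(t_f-t)}e^{-\alpha'(t_f-s)}\le(\alpha+\alpha')^{-1}e^{-\alpha'(t_f-s)}$, using $s<t\le t_f$. This yields $\|\Ykc(t,s)z_0-Y_{-B_cK_\infty}(t,s)z_0\|\le C\|z_0\|e^{-\alpha'(t_f-s)}$, which goes to $0$ as $t_f\to\infty$ uniformly in $t$ for each fixed $s$, giving the claim. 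I expect the main obstacle to be exactly the second step: passing from the strong convergence supplied by Theorem~\ref{J-infty} to the operator-norm (indeed exponential-rate) convergence of $P_{t_f}\to P_\infty$; once that is in hand, the decay estimates of the first step close the argument. A secondary delicate point is the $t_f$-uniform exponential stability of the finite-horizon closed loops, obtained above from \eqref{Osys-r} and a Datko-type argument.
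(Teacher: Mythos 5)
Your strategy (perturbation identity plus norm convergence of $P_{t_f}\to P_\infty$ at an exponential rate) is a legitimate classical route, but as a proof of Theorem \ref{Y-Convergence} \emph{as stated} it has a genuine gap: the quantitative ingredients you rely on do not follow from the theorem's hypotheses. The theorem assumes only uniform detectability of $(A_c(t),W_c^{1/2}(t))$ (plus the finite-cost hypothesis of Theorem \ref{J-infty} at the given initial datum). Your first step asserts $\sup_{t\geq 0}\|P_\infty(t)\|\leq c_P$ "because the hypotheses of Theorem \ref{J-infty} hold uniformly in the initial time"; detectability gives no control on the optimal cost from later initial times, so this is exactly the unjustified leap, and it is not incidental: you need $\|P_\infty(t_f)\|$ bounded as $t_f\to\infty$ in the dynamic-programming comparison, and you need $\|P_{t_f}(\sigma)\|\leq c_P$ at \emph{every intermediate} $\sigma\in[s,t_f]$ to get the $L^2$ energy bounds from all intermediate starting times that feed your Datko argument. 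Likewise, exponential stability of $Y_{-B_cK_\infty}$ and $t_f$-uniform exponential stability of $Y_{-B_cK_{t_f}}$ are not available here: in the paper these are established only later and only under the \emph{additional} hypothesis of uniform stabilizability (Theorem \ref{thm-inf-stab} for $Y_{-B_cK_\infty}$; Theorem \ref{Eval-Y-ineq} gives merely $t_f$-uniform boundedness, not decay, of $Y_{-B_cK_{t_f}}$). A further mismatch: the statement refers to \eqref{Yp} with a general positive terminal weight $P_0$, while your monotone comparison $P_{t_f}(t)\leq P_\infty(t)$ needs the $P_0=0$ truncations of Theorem \ref{J-infty}. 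Without these inputs the exponential-rate bound on $\|P_\infty(t)-P_{t_f}(t)\|$ and the convolution estimate collapse.

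The paper's own proof is built precisely to avoid all of this. It defines the two optimal controls $u_{t_f},u_\infty$ and closed-loop trajectories, quotes \cite[pg 551]{Gibson} for the convergences $u_{t_f}\to u_\infty$ and $W^{1/2}z_{adj,t_f}\to W^{1/2}z_{adj,\infty}$ in $\mathbb{L}^2$ (which require only the value at the fixed initial datum, not uniform-in-time bounds on $P_\infty$), and then uses the detectability rewriting \eqref{Osys-r} -- output injection $L(t)$ with $Y_L(t,s)$ exponentially stable -- to convert those $\mathbb{L}^2$ convergences into $\sup_{t\in[s,\infty)}\|z_{adj,\infty}(t)-z_{adj,t_f}(t)\|\to 0$, i.e.\ exactly the strong, uniform-in-time convergence claimed, with no rate and no closed-loop stability needed. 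If you add uniform stabilizability (which does hold where the theorem is used, Theorem \ref{EXp_stb_evl}), your argument can be completed and in fact yields a stronger conclusion (operator-norm convergence of $P_{t_f}$ and of the closed loops with an explicit $e^{-\alpha'(t_f-s)}$ rate); as written, though, either prove those stabilizability-dependent bounds first or switch to the Gibson-plus-detectability argument.
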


\begin{proof}
Define 
\begin{align*}
u_{\infty}(t) &=-R_c^{-1}(t)B_c(t)^* P_{\infty}(t)z_{adj,\infty}(t), \\ u_{t_f}(t) &=-R_c^{-1}(t)B_c(t)^*P_{t_f}(t)z_{adj,t_f}(t) 
\end{align*}
where $z_{adj,\infty}(t)$ and $z_{adj,t_f}(t)$ are the solutions to \eqref{Osys-r} with  $u(t)=u_{\infty}(t)$ and $u(t)=u_{t_f}(t)$ respectively. Since $R_c(t)$ is  uniformly lower and upper bounded, \cite[pg 551]{Gibson} implies 
\begin{eqnarray}
J(u_{t_f},z_{adj,0},0)&\rightarrow & J_{\infty}(u_{\infty}(t),z_{adj,0},0), \nonumber \\
\label{Osys-conv1}
u_{t_f}&\rightarrow & u_{\infty}\qquad \text{in}\quad \mathbb{L}^2([0,\infty];\mathcal{X}_B), \\
\label{Osys-conv2}
W^{1/2}(t)z_{adj,t_f}(t) & \rightarrow & W^{1/2}(t)z_{adj,\infty}(t)\quad \text{in}\quad \mathbb{L}^2([0,\infty];\mathcal{H}).
\end{eqnarray}

Using \eqref{Osys-r}, 
\begin{equation}
\label{Osys-er}
\begin{aligned}
\left(z_{adj,\infty}(t)-z_{adj,t_f}(t)\right)&=\int_s^tY_L(t,r)\bigg(-L(r)W^{1/2}(r)\left(z_{adj,\infty}(r)-z_{adj,t_f}(r)\right)\\
&+B_c(r)\left(u_{\infty}(r)-u_{t_f}(r)\right)\bigg)dr
\end{aligned}
\end{equation}
Define $\delta z_{adj}(t)=z_{adj,\infty}(t)-z_{adj,t_f}(t)$ and $\delta u(t)=u_{\infty}(t)-u_{t_f}(t)$.  
From \eqref{Osys-er}, 
\begin{equation}
\label{Osys-er-inq}
\begin{aligned}
\sup_{t\in[s,\infty)}\|\delta z_{adj}(t)\|_{\mathcal{H}}&\leq\int_s^\infty\|Y_L(t,r)\|\big(\sup_{r'}\|L(r')\|\|W^{1/2}(r)\delta z_{adj}(r)\|_{\mathcal{H}}\\
&+\sup_{r'}\|B_c(r')\|\|\delta u(r)\|_{\mathcal{U}}\big)dr\\
&\leq\|Y_L(t,s)\|_{\mathbb{L}^2(s,\infty)}\sup_{r'}\|L(r')\|\|W^{1/2}(t)\delta z_{adj}(t)\|_{\mathbb{L}^2([s,\infty];\mathcal{H})}\\
&+\|Y_L(t,s)\|_{\mathbb{L}^2(s,\infty)}\sup_{r'}\|B_c(r')\|\|\delta u(t)\|_{\mathbb{L}^2([s,\infty];\mathcal{X}_B)}.
\end{aligned}
\end{equation}
Since $Y_{L}(t,s)$ is  exponentially stable,    the convergence in   \eqref{Osys-conv2} into \eqref{Osys-er-inq} implies that $\sup_{t\in[s,\infty)}\|\delta z_{adj}(t)\|_{\mathcal{H}} \to 0 . $ This is equivalent to  \eqref{Y-conv} .
\end{proof}

\begin{thm}
\label{thm-inf-stab}
Consider the  infinite-time optimal control problem \eqref{J-inf}  with dynamics \eqref{Osys}.
 Assume that  $(\bd{A}_c (t),\bd{B}_c (t) )$ is uniformly  stabilizable and $(\bd{A}_c (t),\bd{W}^{1/2}_{c}(t))$ is uniformly detectable.
With the optimal  feedback  $$K_{\infty}(t)=\bd{R}_{c}^{-1}(t)\bd{B}_c^*(t) P_\infty (t), $$ the evolution operator $Y_{-B_c K_\infty}(t,s)$ defined in \eqref{Ybk}  is exponentially stable.  
\end{thm}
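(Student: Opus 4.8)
The plan is to use the optimal cost operator $P_\infty$ as a Lyapunov functional in order to obtain an a~priori $\mathbb{L}^2$-bound on the optimal closed-loop trajectory that is \emph{uniform in the initial time}, and then to conclude uniform exponential stability by a Datko-type argument. Fix $z_0\in\mathcal H$ and an initial time $s\ge 0$; write $z(t)=Y_{-B_cK_\infty}(t,s)z_0$ for the optimal closed-loop trajectory started at time $s$ and $u_\infty(t)=-K_\infty(t)z(t)$ for the corresponding optimal control. Applying Theorem~\ref{J-infty} to the problem on $[s,\infty)$ (which has the same structure, since the operators are defined on all of $\mathbb R$ and $W_c,R_c$ are uniformly bounded with $R_c$ coercive uniformly in time) gives $(P_\infty(s)z_0,z_0)=\min_u J_\infty(u;z_0,s)$, and $Y_{-B_cK_\infty}$ is already known there to be an evolution operator.

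\textbf{Step 1 (uniform bound on $P_\infty$ and an energy estimate).} Uniform stabilizability provides a uniformly bounded feedback $K$ with $Y_K(t,s)$ exponentially stable; using $u=-Kz$ in \eqref{J-inf}, together with the time-uniform bounds on $W_c$ and $R_c$, gives $J_\infty(u;z_0,s)\le\tilde c\|z_0\|^2$ with $\tilde c$ independent of $s$. Since $P_\infty(s)\ge 0$ is self-adjoint and $(P_\infty(s)z_0,z_0)=\min_u J_\infty(u;z_0,s)$, this yields $\sup_{s\ge 0}\|P_\infty(s)\|\le\tilde c$. Next, since $(z,u_\infty)$ restricted to $[t,\infty)$ is admissible for the optimal control problem started at $(t,z(t))$, whose minimal cost is $(P_\infty(t)z(t),z(t))$, for every $t\ge s$
\begin{equation*}
\int_s^t\big((z(r),W_c(r)z(r))+(u_\infty(r),R_c(r)u_\infty(r))\big)\,dr+(P_\infty(t)z(t),z(t))\le(P_\infty(s)z_0,z_0).
\end{equation*}
Letting $t\to\infty$ and using $P_\infty(t)\ge 0$, the coercivity of $R_c$, and $\|P_\infty(s)\|\le\tilde c$, there exist $c_0,c_1>0$ independent of $s$ with
\begin{equation*}
\int_s^\infty\|W_c^{1/2}(r)z(r)\|^2\,dr\le c_0\|z_0\|^2,\qquad\int_s^\infty\|u_\infty(r)\|^2\,dr\le c_1\|z_0\|^2.
\end{equation*}

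\textbf{Step 2 (from $W_c^{1/2}z,\,u_\infty\in\mathbb{L}^2$ to $z\in\mathbb{L}^2\cap\mathbb{L}^\infty$).} Invoke detectability: let $L$ be a uniformly bounded feedback such that $Y_L(t,s)$, the evolution operator generated by $A_c(t)+L(t)W_c^{1/2}(t)$, is exponentially stable, $\|Y_L(t,s)\|\le Me^{-\omega(t-s)}$. Since $A_c-B_cK_\infty=(A_c+LW_c^{1/2})-LW_c^{1/2}-B_cK_\infty$ and $B_c(r)K_\infty(r)z(r)=-B_c(r)u_\infty(r)$, the representation \eqref{Osys-r} gives
\begin{equation*}
z(t)=Y_L(t,s)z_0+\int_s^tY_L(t,r)\big(-L(r)W_c^{1/2}(r)z(r)+B_c(r)u_\infty(r)\big)\,dr.
\end{equation*}
Because $L$ and $B_c$ are uniformly bounded and the kernel $\|Y_L(t,r)\|$ decays exponentially, a Cauchy--Schwarz estimate on the integral (with the $\mathbb{L}^2$ bounds of Step~1) gives $\sup_{t\ge s}\|z(t)\|\le c_3\|z_0\|$, and Young's convolution inequality ($\mathbb{L}^1$ kernel against an $\mathbb{L}^2$ function) gives $\int_s^\infty\|z(t)\|^2\,dt\le c_2\|z_0\|^2$, with $c_2,c_3$ independent of $s$. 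In particular $\sup_{t\ge s}\|Y_{-B_cK_\infty}(t,s)\|\le c_3$.

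\textbf{Step 3 (Datko-type conclusion).} For $s\le r\le t$, the evolution property and the uniform bound give $\|z(t)\|=\|Y_{-B_cK_\infty}(t,r)z(r)\|\le c_3\|z(r)\|$; integrating in $r$ over $[s,t]$,
\begin{equation*}
(t-s)\|z(t)\|^2\le c_3^2\int_s^t\|z(r)\|^2\,dr\le c_2c_3^2\|z_0\|^2 .
\end{equation*}
Hence for $t-s\ge T_0:=2c_2c_3^2$ one has $\|Y_{-B_cK_\infty}(t,s)\|\le 2^{-1/2}$; iterating the evolution property gives $\|Y_{-B_cK_\infty}(s+nT_0,s)\|\le 2^{-n/2}$, and filling the gaps with the uniform bound $c_3$ produces $\|Y_{-B_cK_\infty}(t,s)\|\le\bar Me^{-\bar\alpha(t-s)}$ for suitable $\bar M,\bar\alpha>0$, which is the asserted exponential stability.

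The Gr\"onwall/convolution estimates of Step~2 and the iteration of Step~3 are routine; the main obstacle is Step~1. Since $P_\infty$ only solves the \emph{integral} Riccati equation \eqref{eq-Pinf} and $z$ is only a mild solution, one cannot differentiate $t\mapsto(P_\infty(t)z(t),z(t))$ directly, so I would derive the energy balance from the value-function characterization of Theorem~\ref{J-infty} together with the sub-optimality comparison used above (or, equivalently, by substituting \eqref{eq-Pinf} and the mild form of the optimal trajectory into $(P_\infty(s)z_0,z_0)$ and simplifying). The second delicate point is extracting the bound $\sup_s\|P_\infty(s)\|\le\tilde c$ that is \emph{uniform in the initial time}: this is precisely where uniform stabilizability, rather than stabilizability on a single fixed interval, together with the time-uniform bounds on $W_c$ and $R_c$, is essential.
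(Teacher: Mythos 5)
Your proposal is correct and follows essentially the same route as the paper's proof: the optimal-cost bound yields $\mathbb{L}^2$ estimates on $W_c^{1/2}(t)z(t)$ and on the feedback term, the detectability decomposition \eqref{Osys-r} with the exponentially stable $Y_L(t,s)$ converts these into an $\mathbb{L}^2$ (and in your case also $\mathbb{L}^\infty$) bound on the closed-loop trajectory, and a Datko-type argument concludes exponential stability. The only differences are minor: you prove the Datko step by hand via the uniform bound on $Y_{-B_cK_\infty}$ where the paper cites Datko's theorem for evolution operators, and you make the uniformity in the initial time explicit through $\sup_{s}\|P_\infty(s)\|\le\tilde c$ obtained from uniform stabilizability, a point the paper leaves implicit by working from the initial time $0$.
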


\begin{proof}
The optimal control is $$\bd{u}_{opt}(t)=-K_{\infty}(t)\bd{z}_{adj}(t). $$ 
 The dynamics of the controlled system are $\bd{z}_{adj}(t)=Y_{-B_c K_\infty}(t,s)\bd{z}_{adj,0}.$ 
 Define $\bar{\bd{R}}_{c}(t)=\bd{B}_c(t)\bd{R}_{c}^{-1}(t)\bd{B}_c^*(t)$. By optimality of $\bd{u}_{opt}(t)$, Theorem \ref{J-infty}, and definition of the cost function \eqref{J-inf},
 for any $t_f \geq 0 ,$ $ \bd{z}_{adj,0} \in \ss , $
 \begin{equation}
 \label{WR-bdd}
 \begin{aligned}
\int_{0}^{t_f}\big\Vert\bd{W}_{c}^{1/2}(r)\bd{z}_{adj}(r)\big\Vert^2_{\mathcal{H}}ds
+\int_{0}^{t_f}\big\Vert\bar{\bd{R}}_{c}^{1/2}(r)\bd{P}_{\infty}(r)\bd{z}_{adj}(r)\big\Vert^2_{\mathbb{H}}dr \leq (\bd{P}_{\infty}(0)\bd{z}_{adj,0},\bd{z}_{adj,0})_{\mathcal{H}}\leq \|P_{\infty}(0)\|\|z_{adj,0}\|_{\mathcal{H}}^2 .
\end{aligned}
\end{equation}
This implies that 
\begin{equation}
\label{Cost-stb}
\begin{aligned}
\bd{W}_{c}^{1/2}(t)\bd{z}_{adj}(t)&\in\mathbb{L}^2([0,\infty);\mathcal{H}), \\
\bar{\bd{R}}_{c}^{1/2}(t)\bd{P}_{\infty}(t)\bd{z}_{adj}(t)&\in\mathbb{L}^2([0,\infty);\mathcal{H}) . 
\end{aligned}
\end{equation}

Since $(\bd{A}_c(t),\bd{W}^{1/2}_{c}(t))$ is uniformly detectable, there is a linear  uniformly bounded operator $\bd{L}(t)$ such that the evolution operator $\bd{Y}_{L }(t,s)$, as defined in \eqref{eq-YK0} but with $C_c(t) = \bd{W}^{1/2}(t) ,$  is   exponentially stable. 
The operator $\bd{A}_c(t)-\bd{B}_c(t)K_{\infty}(t)$ can be written
\begin{align*}
\bd{A}_c (t)-\bd{B}_c(t)K_{\infty}(t)
&=\bd{A}_c (t)-\bar{\bd{R}}_{c}(t)\bd{P}_{\infty}(t)\\
&=\bd{A}_c (t)-\bd{L}_c(t)\bd{W}_{c}^{1/2}(t)\\
&\quad + \bd{L}_c(t)\bd{W}_{c}^{1/2}(t)-\bar{\bd{R}}_{c}(t)\bd{P}_{\infty}(t). 
\end{align*}  
Therefore, the  evolution operator $Y_{-B_c K_\infty}(t,s)$ generated  by $\bd {A}_c (t) -\bd{B}_c(t)\bd{K}_{\infty}(t)$ can be written 
\begin{equation}
\label{Eval-Y_Pi}
\begin{aligned}
Y_{-B_c K_\infty}(t,s)&=\bd{Y}_{L }(t,s)\\
&+\int_{s}^t\bd{Y}_{L }(t,r)\big(\bd{L}_c(r)\bd{W}_c^{1/2}(r)
-\bar{\bd{R}}_c(r)\bd{P}_{\infty}(r)\big)Y_{-B_c K_\infty}(r,s)dr.
\end{aligned}
\end{equation}
 
For  any initial condition $ \bd{v}\in\mathcal{H}$, \begin{equation}
\label{Eval-Y-Pi-ine}
\begin{aligned}
\Vert Y_{-B_c K_\infty}(t,s)\bd{v}\Vert_{\mathcal{H}}&\leq\Vert\bd{Y}_{L}(t,s)\bd{v}\Vert_{\mathcal{H}}\\
&+\int_{s}^t\Vert\bd{Y}_{L}(t,r)\Vert\Vert\bd{L}(r)\Vert\Vert\bd{W}_c^{1/2}(r)Y_{-B_c K_\infty}(r,s)\bd{v}\Vert_{\mathcal{H}}dr\\
&+\int_{s}^t\Vert\bd{Y}_{L }(t,r)\Vert\Vert\bar{\bd{R}}_c(r)^{1/2}\Vert  \; \Vert\bar{\bd{R}}_c(r)^{1/2}\bd{P}_{\infty}(r)\big)Y_{-B_c K_\infty}(r,s)\bd{v}\Vert_{\mathcal{H}} dr.
\end{aligned}
\end{equation}
Let $\delta_{Y,0}  , \alpha_Y >0$ be such that  $\Vert\bd{Y}_{L }(t,r)\Vert\leq\delta_{Y,0}\exp\left(-\alpha_Y(t-r)\right) .$  
Since $\bd{L}_c (t)$ and $\bar{\bd{R}}(t)$  are uniformly bounded over $[0,t_f]$, for some $\delta_{Y,0},\delta_{Y,1},\delta_{Y,2}>0$,
\begin{equation}
\label{Eval-Y-Pi-ine-d}
\begin{aligned}
\Vert Y_{-B_c K_\infty}(t,s)\bd{v}\Vert_{\mathcal{H}}&\leq\delta_{Y,0}\exp(-\alpha_Y(t-s))\|\bd{v}\|_{\mathcal{H}}\\
&+\delta_{Y,1}\int_{s}^t\exp\left(-\alpha_Y(t-r)\right)\Vert\bd{W}_c^{1/2}(r)Y_{-B_c K_\infty}(r,s)\bd{v}\Vert_{\mathcal{H}}dr\\
&+\delta_{Y,2}\int_{s}^t\exp\left(-\alpha_Y(t-r)\right)
\Vert\bar{\bd{R}}_c(r)^{1/2}\bd{P}_{\infty}(r)\big)Y_{-B_c K_\infty}(r,s)\bd{v}\Vert_{\mathcal{H}} dr.
\end{aligned}
\end{equation}
Since $\bd{W}_c^{1/2}(r)Y_{-B_c K_\infty}(r,s)\bd{v}\in\mathbb{L}^2([0,\infty);\mathcal{H})$ and $\bar{\bd{R}}_c^{1/2}(r)\bd{P}_{\infty}(r)Y_{-B_c K_\infty}(r,s)\bd{v}\in\mathbb{L}^2([0,\infty);\mathcal{H})$ \eqref{Cost-stb}, 
the convolution product  
\begin{equation}
\label{L2}
\Vert Y_{-B_c K_\infty}(t,s)\bd{v}\Vert_{\mathbb{L}^2([0,\infty);\mathcal{H})}\leq\delta_{Y,3},
\end{equation}
where $\delta_{\delta,3}>0$ is independent of $t_f>0.$
(See for example \cite[Lemma A.6.6]{Curtainb}.)
Furthermore,
 $Y_{-B_c K_\infty}$ is a continuous evolution operator  by \cite[Thm 1.1]{CRiccati-1976} and has an exponential growth bound. Therefore, by Datko’s theorem for evolution operators, \cite[Thm. 1]{Datko},  $Y_{-B_cK_{\infty}}(t,s)$ is exponentially stable.
\end{proof}

\begin{thm}
\label{thm-stab-control}
 Assume that  $(\bd{A}_c (t),\bd{B}_c (t) )$ is uniformly  stabilizable and $(\bd{A}_c (t),\bd{W}^{1/2}_{c}(t))$ is uniformly detectable.
Then the solution $P_{\infty}(s)$ to the Riccati equation \eqref{eq-Pinf} is unique and yields the  optimal cost.
\end{thm}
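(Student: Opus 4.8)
The plan is to combine what is already available: Theorem~\ref{J-infty} produces a nonnegative self-adjoint operator $P_\infty(\cdot)$ solving \eqref{eq-Pinf} and identifies the infinite-horizon optimal cost at initial time $0$, while Theorem~\ref{thm-inf-stab} (valid precisely under the present hypotheses) shows that the associated closed loop $Y_{-B_cK_\infty}(t,s)$ is exponentially stable. From these I would first upgrade the optimal-cost identity to an arbitrary initial time $s$, and then prove uniqueness by showing that \emph{every} bounded nonnegative self-adjoint solution of \eqref{eq-Pinf} is stabilizing, hence forced to coincide with $P_\infty$.

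\textbf{Step 1: optimality at every initial time.} Fix $s\ge 0$, $z_{adj,0}\in\mathcal H$, and an admissible control $u$ with trajectory $z_{adj}$ from \eqref{Osys}. Working with the integral form of \eqref{eq-Pinf} (so as not to assume $P_\infty(\cdot)$ differentiable), a completion-of-squares computation yields, for every $T>s$,
\begin{equation*}
\int_s^T\!\big[(z_{adj},W_c z_{adj})+(u,R_c u)\big]\,dr
=(P_\infty(s)z_{adj,0},z_{adj,0})-(P_\infty(T)z_{adj}(T),z_{adj}(T))
+\int_s^T\big\|R_c^{1/2}(u+K_\infty z_{adj})\big\|^2\,dr,
\end{equation*}
with $K_\infty(r)=R_c^{-1}(r)B_c^*(r)P_\infty(r)$. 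For $u=-K_\infty z_{adj}$ one has $z_{adj}(t)=Y_{-B_cK_\infty}(t,s)z_{adj,0}$, which decays exponentially by Theorem~\ref{thm-inf-stab}; hence $(P_\infty(T)z_{adj}(T),z_{adj}(T))\to 0$, the last integral vanishes, and the cost equals $(P_\infty(s)z_{adj,0},z_{adj,0})$. For a general $u$ with $J_\infty(u;z_{adj,0},s)<\infty$, uniform detectability of $(A_c(t),W_c^{1/2}(t))$ enters as in \eqref{Osys-r}--\eqref{Osys-er-inq}: since $W_c^{1/2}z_{adj}\in\mathbb L^2$ and, by coercivity of $R_c$, $u\in\mathbb L^2$, the representation $z_{adj}(t)=Y_L(t,s)z_{adj,0}+\int_s^tY_L(t,r)\big(-L W_c^{1/2}z_{adj}+B_c u\big)dr$ with $Y_L$ exponentially stable forces $z_{adj}\in\mathbb L^2$ and $z_{adj}(t)\to 0$, so again the terminal term vanishes and $J_\infty(u;z_{adj,0},s)=(P_\infty(s)z_{adj,0},z_{adj,0})+\int_s^\infty\|R_c^{1/2}(u+K_\infty z_{adj})\|^2 dr\ge(P_\infty(s)z_{adj,0},z_{adj,0})$. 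Thus $\min_u J_\infty(u;z_{adj,0},s)=(P_\infty(s)z_{adj,0},z_{adj,0})$, attained by the feedback $u=-K_\infty z_{adj}$. (Equivalently, one may pass to the limit $t_f\to\infty$ in Theorem~\ref{thm-lq}, using Theorem~\ref{Y-Convergence} and monotonicity in $t_f$.)

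\textbf{Step 2: uniqueness.} Let $\hat P(\cdot)$ be any bounded, nonnegative, self-adjoint solution of \eqref{eq-Pinf}, put $\hat K=R_c^{-1}B_c^*\hat P$, and let $Y_{-B_c\hat K}(t,s)$ be the closed-loop evolution operator (which exists by Theorem~\ref{thm mild evol op}, $\hat K$ being bounded and continuous in time). The crux is that $Y_{-B_c\hat K}$ is exponentially stable. Along $z(t)=Y_{-B_c\hat K}(t,s)v$ the Riccati relation gives $\tfrac{d}{dt}(\hat P z,z)=-\|W_c^{1/2}z\|^2-\|R_c^{1/2}\hat K z\|^2\le 0$, whence $W_c^{1/2}Y_{-B_c\hat K}(\cdot,s)v$ and $\bar R_c^{1/2}\hat P\,Y_{-B_c\hat K}(\cdot,s)v$ lie in $\mathbb L^2([s,\infty);\mathcal H)$, where $\bar R_c=B_cR_c^{-1}B_c^*$. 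With these $\mathbb L^2$ bounds in hand, the detectability splitting $A_c=(A_c+LW_c^{1/2})-LW_c^{1/2}$, the exponential stability of $Y_L$, the convolution estimate used for \eqref{L2}, and Datko's theorem yield exponential stability of $Y_{-B_c\hat K}$ exactly as in the proof of Theorem~\ref{thm-inf-stab}; the only change is that the required $\mathbb L^2$ estimates now come directly from the Riccati identity rather than from optimality of $P_\infty$. Once $Y_{-B_c\hat K}$ is known to be exponentially stable, the completion-of-squares argument of Step~1 applies verbatim with $\hat P$ in place of $P_\infty$, giving $(\hat P(s)z_{adj,0},z_{adj,0})=\min_u J_\infty(u;z_{adj,0},s)=(P_\infty(s)z_{adj,0},z_{adj,0})$ for all $s$ and $z_{adj,0}$. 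Since $\hat P(s)$ and $P_\infty(s)$ are self-adjoint, $\hat P(s)=P_\infty(s)$ for every $s$; in particular $P_\infty$ is the unique such solution and it yields the optimal cost.

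\textbf{Main obstacle.} The delicate step is Step~2: showing that an \emph{arbitrary} nonnegative self-adjoint solution of the Riccati equation is stabilizing. This forces us to reproduce the machinery of Theorem~\ref{thm-inf-stab} while extracting the closed-loop $\mathbb L^2$-estimates from the Riccati relation itself, and to do this rigorously the completion-of-squares identity must be phrased through the integral (mild) forms, since $\hat P(\cdot)$ need not be differentiable in $t$ and $z_{adj}$ is only a mild solution. A secondary point, recurring in both steps, is justifying that the terminal term $(\hat P(T)z_{adj}(T),z_{adj}(T))$ vanishes along finite-cost trajectories, which is exactly where uniform detectability is used a second time.
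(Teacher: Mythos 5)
Your proposal is correct in substance, but it reaches uniqueness by a genuinely different route than the paper. The paper's proof has a single technical ingredient: for any admissible finite-cost control it shows $u\in\mathbb{L}^2$ (coercivity of $R_c$), then $z_{adj}\in\mathbb{L}^2$ (the detectability splitting \eqref{Osys-r} with the exponentially stable $Y_L$), and finally $\|z_{adj}(t)\|\to 0$ via the covering argument of Lemma 4.1 of \cite{Gibson}; uniqueness and optimality of $P_\infty$ are then obtained by citing Theorem 4.6 of \cite{Gibson}, which says precisely that this decay property forces the infinite-interval integral Riccati equation to have at most one solution, which gives the optimal cost. You instead prove uniqueness self-containedly: pairing the integral identity \eqref{eq-Pinf} (with its closed loop \eqref{Ybk}) against an initial vector gives uniform $\mathbb{L}^2$ bounds on $W_c^{1/2}Y_{-B_c\hat K}(\cdot,s)v$ and $\bar R_c^{1/2}\hat P Y_{-B_c\hat K}(\cdot,s)v$ for \emph{any} bounded nonnegative self-adjoint solution $\hat P$, so the detectability-plus-Datko machinery of Theorem \ref{thm-inf-stab} makes $Y_{-B_c\hat K}$ exponentially stable, and completion of squares then identifies $(\hat P(s)v,v)$ with the optimal cost, hence $\hat P=P_\infty$. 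What your route buys is independence from Gibson's uniqueness theorem; what it costs is that you must establish the completion-of-squares (fundamental) identity in mild/integral form, a piece of machinery the paper never needs. Your Step 1 is essentially Theorem \ref{J-infty} plus the paper's decay argument; your one-line claim that the exponentially stable $Y_L$ representation with $\mathbb{L}^2$ forcing already yields $z_{adj}(t)\to 0$ is valid (a Cauchy--Schwarz split of the convolution) and is in fact a cleaner route than the paper's estimate, which works with the unstabilized evolution operator $Y$. Both arguments use uniform detectability in the same two places: decay of finite-cost trajectories and exponential stability of the closed loop.
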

\begin{proof}
Let  $u(t)$ be any admissible control input strongly measurable control input such that \eqref{J-inf} is finite. The cost function \eqref{J-inf} is bounded above and thus for some $\delta_{u}>0$,
\begin{equation}
\label{ADM}
\begin{aligned}
\lim_{t_f\rightarrow\infty}\int_s^{t_f}(u(r),R_c(r)u(r))_{\mathcal{X}_B}dr\leq &\delta_u,\\
\lim_{t_f\rightarrow\infty}\int_s^{t_f}(z_{adj}(r),W_c(r)z_{adj}(r))_{\mathcal{X}_B}dr\leq &\delta_u.
\end{aligned}
\end{equation}
 Since $R_c(t)$ is uniformly coercive, 
\begin{equation}
\label{u_b}
u(t)\in\mathbb{L}^2([s,\infty);\mathcal{X}_B).
\end{equation}

Using \eqref{Osys-r} and letting $M_L , $ $\alpha_L$ be such that  $\|Y_L(t,s)\|\leq M_L\exp\left(-\alpha_L(t-s)\right),$
\begin{equation}
\label{Osys-er-inq-p}
\begin{aligned}
\|z_{adj}(t)\|_{\mathcal{H}}&\leq
\|Y_L(t,s)\|\|z_{adj,0}\|_{\mathcal{H}}\\
&+\int_s^t\|Y_L(t,r)\|\|L(r)\|\|W^{1/2}(r) z_{adj}(r)\|_{\mathcal{H}}dr\\
&+\int_s^t\|Y_L(t,r)\|\|B_c(r)\|\| u(r)\|_{\mathcal{U}}dr\\
&\leq M_L \|z_{adj,0}\|_{\mathcal{H}}\exp\left(-\alpha_L(t-s)\right)\\
&+M_L\sup_{r}\|L(r)\|\int_s^t\exp\left(-\alpha_L(t-r)\right)\|W^{1/2}(r) z_{adj}(r)\|_{\mathcal{H}}dr\\
&+M_L\sup_{r}\|B_c(r)\|\int_s^t\exp\left(-\alpha_L(t-r)\right)\| u(r)\|_{\mathcal{U}}dr.
\end{aligned}
\end{equation}
The convolution product  \eqref{Osys-er-inq-p}  bounding $\| z_{adj}(t) \| $ and   \eqref{ADM}  implies that
\begin{equation}
\label{z_b}
z_{adj}(t)\in\mathbb{L}^2([s,\infty);\mathcal{H}).
\end{equation}
(See for example, \cite[Lemma A.6.5]{Curtainb}.) 

 It will now be shown that $z_{adj} (t) \to 0$ using the technique in the  proof of \cite[Lemma 4.1]{Gibson}.
Due to  \eqref{u_b} and \eqref{z_b}, for any $n$  there exists $t_n>0$ such that 
$$
\int_{t_n} ^\infty \|z_{adj}(r)\|_{\mathcal{H}}^2dr <\frac{1}{n^4}, \quad
\int_{t_n} ^\infty \|u(r)\|_{\mathcal{X}_B}^2dr < \frac{1}{n^4} . 
$$
Define $S_t=\lbrace t| t>t_n\,\&\,\|z_{adj}(t)\|_{\mathcal{H}}\geq 1/n^2\rbrace. $
Note that 
\begin{align*}
\frac{1}{n^2}\int_{S_t}dr& \leq \int_{S_t} \|z_{adj}(r)\|_{\mathcal{H}}^2dr \\
&<\frac{1}{n^4}
\end{align*}
and so   $\int_{S_t} dr <1/n^2$; write $\delta_n = \int_{S_t} dr .$
Thus, for $t\geq t_n+1/n^2$, there exists $s$ between $t_n$ and $t$ such that $|t-s| < \delta_n$ and  $\|z_{adj}(s)\|<1/n$. From \eqref{Osys}, 
\begin{align*}
\|z_{adj}(t)\|^2_{\mathcal{H}}&\leq \delta_{Y,0}\exp\left(\alpha_{Y,0}(t-s)\right)\frac{1}{n}+\delta_{Y,0}\int_s^t\exp\left(\alpha_{Y,0}(r-s)\right)\|B_c(r)\|\|u(r)\|_{}dr\\
&\leq \delta_{Y,0}\exp\left(\alpha_{Y,0}\delta_n\right)\left(1/n+\sup_t(\|B_c(t)\|)(t-s)^{1/2}\left(\int_s^\infty\|u(r)\|^2_{}dr\right)^{1/2}\right)\\
&\leq \delta_{Y,0}\exp\left(\alpha_{Y,0}\delta_n\right)\left(1/n+\sup_t(\|B_c(t)\|)1/n^3\right).
\end{align*}
Therefore,
$$\lim_{t \to \infty} \|z_{adj}(t)\|=0. $$
Theorem 4.6 in \cite{Gibson}  then implies that there is at most one solution to the Riccati integral equations \eqref{eq-Pinf} on the infinite interval and hence  $  P_{\infty}(s)$  is the optimal cost.
\end{proof}

\begin{thm}
\label{Eval-Y-ineq}
The solution $ P_{t_f}(0)$ to  \eqref{ORiccatiD} can be uniformly bounded independent of $t_f>0. $
With the  optimal  feedback  $$K_{t_f}(t)=\bd{R}_{c}^{-1}(t)\bd{B}_c^*(t) P_{t_f} (t), $$
the evolution operator $Y_{-B_c K_{t_f}}(t,s)$ defined in \eqref{Yp}  is uniformly bounded independent of $t_f .$
\end{thm}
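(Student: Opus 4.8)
The plan is to obtain both assertions from the optimal-control interpretation of $\bd{P}_{t_f}$ together with the two structural hypotheses (uniform stabilizability of $(\bd{A}_c,\bd{B}_c)$ and uniform detectability of $(\bd{A}_c,\bd{W}_c^{1/2})$), reusing the detectability decomposition already exploited in the proof of Theorem~\ref{thm-inf-stab}. First I would establish a uniform bound on $\bd{P}_{t_f}$. By the dynamic-programming interpretation of Theorem~\ref{thm-lq}, for each $s\in[0,t_f]$ and each $\bd{v}\in\mathcal{H}$ the quantity $(\bd{P}_{t_f}(s)\bd{v},\bd{v})_{\mathcal{H}}$ is the minimal value of the cost \eqref{Cost} over the horizon $[s,t_f]$ with initial state $\bd{v}$ at time $s$, hence is dominated by the cost of any admissible control. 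Since $(\bd{A}_c(t),\bd{B}_c(t))$ is uniformly stabilizable there is a uniformly bounded continuous $\bd{K}(t)$ with $\|\bd{Y}_K(t,s)\|\le M_K e^{-\alpha_K(t-s)}$, and feeding in $\bd{u}(r)=\bd{K}(r)\bd{z}_{adj}(r)$ produces the closed-loop trajectory $\bd{z}_{adj}(r)=\bd{Y}_K(r,s)\bd{v}$. Because $\bd{P}_0$, $\bd{W}_c(t)$, $\bd{R}_c(t)$ and $\bd{K}(t)$ are all uniformly bounded, the resulting cost — a terminal term $\le\|\bd{P}_0\|M_K^2\|\bd{v}\|^2$ plus integrals of exponentially decaying functions — is bounded by $c_0\|\bd{v}\|_{\mathcal{H}}^2$ with $c_0$ independent of $s$ and $t_f$. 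Thus $(\bd{P}_{t_f}(s)\bd{v},\bd{v})_{\mathcal{H}}\le c_0\|\bd{v}\|_{\mathcal{H}}^2$, and since $\bd{P}_{t_f}(s)$ is self-adjoint and nonnegative, $\|\bd{P}_{t_f}(s)\|\le c_0$ for all $0\le s\le t_f$ and all $t_f>0$; the case $s=0$ is the first claim.

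Next I would record the $\mathbb{L}^2$ estimates along the optimal flow. Fixing $s$ and $\bd{v}$, set $\bd{z}_{adj}(r)=Y_{-B_cK_{t_f}}(r,s)\bd{v}$, the trajectory under the optimal feedback $\bd{u}_{opt}(r)=-\bd{K}_{t_f}(r)\bd{z}_{adj}(r)$, and write $\bar{\bd{R}}_c(t)=\bd{B}_c(t)\bd{R}_c^{-1}(t)\bd{B}_c^*(t)$. As in \eqref{WR-bdd}, using $(\bd{u}_{opt}(r),\bd{R}_c(r)\bd{u}_{opt}(r))=\|\bar{\bd{R}}_c^{1/2}(r)\bd{P}_{t_f}(r)\bd{z}_{adj}(r)\|^2$, the optimal-cost identity, nonnegativity of the terminal term and the bound just obtained give
\begin{equation*}
\int_s^{t_f}\|\bd{W}_c^{1/2}(r)\bd{z}_{adj}(r)\|_{\mathcal{H}}^2\,dr+\int_s^{t_f}\|\bar{\bd{R}}_c^{1/2}(r)\bd{P}_{t_f}(r)\bd{z}_{adj}(r)\|_{\mathcal{H}}^2\,dr\le(\bd{P}_{t_f}(s)\bd{v},\bd{v})_{\mathcal{H}}\le c_0\|\bd{v}\|_{\mathcal{H}}^2 ,
\end{equation*}
with $c_0$ independent of $s$ and $t_f$.

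Finally I would bound $Y_{-B_cK_{t_f}}$. Uniform detectability of $(\bd{A}_c(t),\bd{W}_c^{1/2}(t))$ provides a uniformly bounded $\bd{L}(t)$ for which $\bd{Y}_L(t,s)$, the evolution operator generated by $\bd{A}_c(t)+\bd{L}(t)\bd{W}_c^{1/2}(t)$, satisfies $\|\bd{Y}_L(t,s)\|\le\delta_{Y,0}e^{-\alpha_Y(t-s)}$. Writing the closed-loop generator as $\bd{A}_c(t)-\bar{\bd{R}}_c(t)\bd{P}_{t_f}(t)=\big(\bd{A}_c(t)+\bd{L}(t)\bd{W}_c^{1/2}(t)\big)-\bd{L}(t)\bd{W}_c^{1/2}(t)-\bar{\bd{R}}_c(t)\bd{P}_{t_f}(t)$, one gets, as in the derivation of \eqref{Eval-Y_Pi},
\begin{equation*}
Y_{-B_cK_{t_f}}(t,s)\bd{w}=\bd{Y}_L(t,s)\bd{w}-\int_s^t\bd{Y}_L(t,r)\big(\bd{L}(r)\bd{W}_c^{1/2}(r)+\bar{\bd{R}}_c(r)\bd{P}_{t_f}(r)\big)Y_{-B_cK_{t_f}}(r,s)\bd{w}\,dr .
\end{equation*}
Taking norms, using uniform boundedness of $\bd{L}(t)$ and $\bar{\bd{R}}_c^{1/2}(t)$ and the exponential decay of $\bd{Y}_L$, and then applying Cauchy--Schwarz together with the $\mathbb{L}^2$ estimates above, each of the two convolution integrals is bounded by $(2\alpha_Y)^{-1/2}\sqrt{c_0}\,\|\bd{v}\|_{\mathcal{H}}$, so $\|Y_{-B_cK_{t_f}}(t,s)\|\le\delta_{Y,0}+C(2\alpha_Y)^{-1/2}\sqrt{c_0}$ uniformly in $t_f$, $t$ and $s$.

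I expect the only point requiring genuine care to be the first step: justifying that $(\bd{P}_{t_f}(s)\bd{v},\bd{v})$ is the optimal cost of the \emph{truncated} problem on $[s,t_f]$ — so that it is dominated by the cost incurred by the stabilizing feedback — and checking that this suboptimal cost is bounded by a constant uniform in \textbf{both} $s$ and $t_f$, which is exactly where uniform (as opposed to pointwise-in-time) stabilizability is used, via the uniform exponential bound on $\bd{Y}_K$. Once this is in place, the remaining two steps are a routine repetition of the detectability-decomposition argument already carried out for Theorem~\ref{thm-inf-stab}.
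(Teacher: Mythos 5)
Your proposal is correct and follows the same overall strategy as the paper's proof: the $\mathbb{L}^2$ bounds on $\bd{W}_c^{1/2}\bd{z}_{adj}$ and $\bar{\bd{R}}_c^{1/2}\bd{P}_{t_f}\bd{z}_{adj}$ along the optimal closed-loop trajectory obtained from the optimal-cost identity, then the detectability decomposition of the closed-loop generator into $(\bd{A}_c+\bd{L}\bd{W}_c^{1/2})$ minus the terms $\bd{L}\bd{W}_c^{1/2}$ and $\bar{\bd{R}}_c\bd{P}_{t_f}$, and finally a convolution (Cauchy--Schwarz) estimate against the exponentially stable $\bd{Y}_L$, exactly as in the paper's final display. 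The one genuine difference is the first step: the paper bounds $\bd{P}_{t_f}(0)$ by comparison with the infinite-horizon operator, $(\bd{P}_{t_f}(0)\bd{z},\bd{z})\leq(\bd{P}_{\infty}(0)\bd{z},\bd{z})$, thereby leaning on Theorems \ref{J-infty} and \ref{thm-stab-control}, whereas you bound the finite-horizon optimal cost directly by inserting the uniformly stabilizing suboptimal feedback $\bd{u}=\bd{K}\bd{z}_{adj}$ furnished by the stabilizability hypothesis. Your route is more self-contained (no appeal to $\bd{P}_\infty$), it explicitly controls the terminal term $(\bd{z}_{adj}(t_f),\bd{P}_0\bd{z}_{adj}(t_f))$ --- which the paper's comparison with the infinite-horizon cost (stated for $\bd{P}_0=0$ in Theorem \ref{J-infty}) passes over --- and it yields the bound on $(\bd{P}_{t_f}(s)\bd{v},\bd{v})$ uniformly in the initial time $s$, which is what the final estimate on $Y_{-B_cK_{t_f}}(t,s)$ for general $s$ actually requires (the paper writes its $\mathbb{L}^2$ bounds only from time $0$). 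The only point to document carefully, as you yourself flag, is that $(\bd{P}_{t_f}(s)\bd{v},\bd{v})$ is the optimal cost-to-go of the truncated problem on $[s,t_f]$; this is standard linear-quadratic theory and is contained in the results of \cite{CRiccati-1976} already invoked in Theorem \ref{thm-lq}.
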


\begin{proof}
Since $P_\infty(0)$ yields the optimal cost to the infinite-time problem, for all $t_f > 0$ and $z_{adj,0} \in \mathcal{H} , $
$$(P_{t_f}(0)z_{adj,0},z_{adj,0})_{\mathcal{H}}\leq (P_{\infty}(0)z_{adj,0},z_{adj,0})_{\mathcal{H}}$$
Thus there exists $\delta_p>0$ such that for all $t_f >0 , $
$$\|P_{t_f}(0)\|\leq \delta_p.$$
Now, from optimality of  $u_{opt}(t)=-K_{t_f}(t)z_{adj}(t)$, and definition of the cost  \eqref{Cost}  it can be concluded that for some $\delta_p^2 > 0 , $
\begin{align*}
\int_{0}^{t_f}\|W^{1/2}_c(r)z_{adj}(r)\|_{\mathcal{H}}^2dr&
\leq \delta_p^2 \|z_{adj,0}\|_{\mathcal{H}}^2\\
\int_{0}^{t_f}\|\bar{R}^{1/2}_c(r)P_{t_f}(r)z_{adj}(r)\|_{\mathcal{X}_B}^2dr&
\leq \delta_p^2 \| |z_{adj,0}\|_{\mathcal{H}}^2,
\end{align*}
and for $u(t)=u_{opt}(t)$, \eqref{Osys-r} leads to, 
 letting $M_L , $ $\alpha_L$ be such that  $\|Y_L(t,s)\|\leq M_L\exp\left(-\alpha_L(t-s)\right),$
\begin{equation*}
\begin{aligned}
\|z_{adj}(t)\|_{\mathcal{H}}&\leq
\|Y_L(t,s)\|\|z_{adj,0}\|_{\mathcal{H}}\\
&+\int_s^t\|Y_L(t,r)\|\big(\sup_{r'}\|L(r')\|\|W^{1/2}(r) z_{adj}(r)\|_{\mathcal{H}}\\
&\quad +\sup_{r'}\|\bar{R}_c^{1/2}(r')\|\| \bar{R}_c^{1/2}P_{t_f}(r)z_{adj}(r)\|_{\mathcal{U}}\big)dr\\
&\leq M_L \exp\left(-\alpha_L (t-s) \right) \|z_{adj,0}\|_{\mathcal{H}}\\
&\quad +\frac{M_L}{\sqrt{2 \alpha}}  \sup_{r'}\|L(r')\|\|W^{1/2}(t)z_{adj}(t)\|_{\mathbb{L}^2([s,\infty];\mathcal{H})}\\
&\quad +\frac{M_L}{\sqrt{2 \alpha}} \sup_{r'}\|\bar{R}_c^{1/2}(r')\|\|\bar{R}_c^{1/2}P_{t_f}(r)z_{adj}(r)\|_{\mathbb{L}^2([s,\infty];\mathcal{X}_B)}\\
& \leq  M_L \left( \exp\left(-\alpha_L(t-s) \right) +\frac{2  \delta_p}{\sqrt{2 \alpha}  }( \sup_{r'}\|L(r')\| +   \sup_{r'}\|\bar{R}_c^{1/2}(r')\|)  \right) \|z_{adj,0}\|_{\mathcal{H}}  .
\end{aligned}
\end{equation*}
Thus, 
$\|Y_{-B_cK_{t_f}}(t,s)\|$ 
is a  bounded operator independent of $t_f>0 .$
\end{proof}

The  following result now follows from Theorem \ref{thm-stab-control} and duality since  $\bd{U}_{P,\alpha}(t,s)=\Ykc^*(t_f-s,t_f-t)$ by \cite[Theorem 1.2 and 1.3]{CRiccati-1976}. 

\begin{thm}
\label{EXp_stb_evl}
Defining $\bd{A}_d(t)=\bd{A}+\alpha\bd{I}+\bd{DF}(\zh_P(t),t), $  assume that  $(\bd{A}_d(t),\bd{C})$, is  uniformly detectable and $(\bd{A}_d(t),\bd{W}^{1/2}(t))$ is uniformly stabilizable.
 Defining $\bd{L}(t)=\bd{P}(t)\bd{C}^*\bd{R}^{-1}(t)$ where. $\bd{P}(t)$ satisfies the Riccati equation \eqref{ORiccatia},  the evolution operator $\U_{P}(t,s)$ generated by $\bd{A}_d(t)-\bd{L}(t)\bd{C}$,  is uniformly bounded; 
 that is there exists $\delta_Y>0$ can be chosen so that for all $t\in[0,t_f]$
 \begin{equation}
\label{Eval-U-ineq}
\Vert\U_{P,\alpha}(t,s)\Vert_{\mathcal{H}}\leq \delta_Y.
\end{equation}
\end{thm}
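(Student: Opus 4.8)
The plan is to avoid any direct Gr\"onwall estimate on \eqref{add1a}--\eqref{add2a} (which, because of the $\alpha$-shifted semigroup, would only give a bound of order $e^{\alpha t_f}$) and instead obtain a $t_f$-uniform bound by duality with the linear--quadratic control problem of Theorems~\ref{thm-inf-stab}--\ref{Eval-Y-ineq}, using the identity $\bd{U}_{P,\alpha}(t,s)=\Ykc^{*}(t_f-s,t_f-t)$ recorded before the statement (from \cite[Theorems 1.2 and 1.3]{CRiccati-1976}). Concretely, fix $t_f>0$ and set $\bd{A}_c(t):=\bd{A}_d^{*}(t_f-t)$, $\bd{B}_c(t):=\bd{C}^{*}$, $\bd{W}_c(t):=\bd{W}(t_f-t)$ and $\bd{R}_c(t):=\bd{R}(t_f-t)$; under the change of variable $t\mapsto t_f-t$ the coupled filter equations \eqref{add1a}, \eqref{add2a}, \eqref{ORiccatia} are precisely the control Riccati equations \eqref{ORiccatiD}--\eqref{Yp} for this data, and $\Ykc$ is the corresponding closed-loop control evolution operator. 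Since $\|\Ykc^{*}(a,b)\|=\|\Ykc(a,b)\|$, it then suffices to bound $\|\Ykc(a,b)\|$ for $0\le b\le a\le t_f$ by a constant independent of $t_f$.

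First I would translate the hypotheses through this duality. If $\bd{L}(t)$ is a uniformly bounded, continuous operator with the evolution operator generated by $\bd{A}_d(t)+\bd{L}(t)\bd{C}$ exponentially stable (witnessing uniform detectability of $(\bd{A}_d(t),\bd{C})$), then $\bd{K}(t):=\bd{L}^{*}(t_f-t)$ is uniformly bounded and the evolution operator generated by $\bd{A}_c(t)+\bd{B}_c(t)\bd{K}(t)=(\bd{A}_d(t_f-t)+\bd{L}(t_f-t)\bd{C})^{*}$ is the time-reversed adjoint of an exponentially stable evolution operator, hence exponentially stable with the same constants; thus $(\bd{A}_c(t),\bd{B}_c(t))$ is uniformly stabilizable. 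The identical argument, with $\bd{W}^{1/2}$ in place of $\bd{C}$ and using that $\bd{W}^{1/2}$ is self-adjoint, shows that uniform stabilizability of $(\bd{A}_d(t),\bd{W}^{1/2}(t))$ yields uniform detectability of $(\bd{A}_c(t),\bd{W}_c^{1/2}(t))$. The remaining standing requirements of Theorem~\ref{Eval-Y-ineq} hold: $\bd{B}_c=\bd{C}^{*}$ is bounded and time-independent, and by Assumption~\ref{asp operators} the operators $\bd{W}_c,\bd{R}_c$ are (uniformly) bounded and $\bd{R}_c$ is uniformly coercive.

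With these verifications, Theorem~\ref{Eval-Y-ineq} (together with Theorems~\ref{thm-inf-stab} and~\ref{thm-stab-control}, on which it relies) applies to the dual control problem and delivers a constant, independent of $t_f$, bounding $\|\Ykc(a,b)\|$ for all $0\le b\le a\le t_f$; transporting it through $\bd{U}_{P,\alpha}(t,s)=\Ykc^{*}(t_f-s,t_f-t)$ gives the desired $\delta_Y>0$, independent of $t_f$, with $\|\bd{U}_{P,\alpha}(t,s)\|\le\delta_Y$ on $[0,t_f]$. I expect the main work to be in the second paragraph: checking term by term that the time reversal really carries \eqref{add1a}, \eqref{add2a}, \eqref{ORiccatia} onto \eqref{ORiccatiD}--\eqref{Yp} (this is where \cite[Thms.~1.2--1.3]{CRiccati-1976} is used), and confirming that uniform detectability/stabilizability as defined here is genuinely self-dual under adjoint-and-time-reversal, so that the witnessing gains stay uniformly bounded. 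One should also make explicit that $\bd{A}_d(t)$ (hence $\zh_P$), $\bd{W}(t)$ and $\bd{R}(t)$ are regarded on the half-line $[0,\infty)$, since the $t_f$-uniformity ultimately stems from the infinite-horizon results of Theorems~\ref{thm-inf-stab}--\ref{Eval-Y-ineq}.
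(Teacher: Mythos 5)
Your proposal is correct and follows essentially the same route as the paper: the paper's proof also sets $\bd{A}_c(t)=\bd{A}_d^*(t_f-t)$, $\bd{W}_c(t)=\bd{W}(t_f-t)$, $\bd{R}_c(t)=\bd{R}(t_f-t)$, identifies $\Ykc(t,s)=\U_{P,\alpha}^*(t_f-s,t_f-t)$ via \cite{CRiccati-1976}, rewrites \eqref{ORiccatia} as \eqref{ORiccatiD}, and invokes Theorem \ref{Eval-Y-ineq} to transfer the $t_f$-independent bound back to $\U_{P,\alpha}$. Your explicit verification that the detectability/stabilizability hypotheses dualize under adjoint-and-time-reversal (with $\bd{B}_c=\bd{C}^*$ and $\bd{W}^{1/2}$ self-adjoint) is simply a more detailed account of a step the paper leaves implicit.
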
 

\begin{proof}

A standard duality argument  will be used to show the stability of the evolution operator generated by $\bd{A}_d-\bd{L}(t)\bd{C}$. 

Let $\bd{A}_{c}(t)=\bd{A}_d^*(t_f-t)$, $\bd{W}_{c}(t)=\bd{W}(t_f-t)$, and $\bd{R}_{c}(t)=\bd{R}(t_f-t),$ $\bd{Y}(t,s)=\bd{U}_{\alpha}^*(t_f-s,t_f-t)$, where $\bd{U}_{\alpha}(t,s)$ is generated by $\bd{A}_d(t)$; thus, $\bd{Y}(t,s)$ is a mild evolution operator. It can be concluded from \cite[Theorem 1.1 and Corollary 1.2]{CRiccati-1976} $\Ykc(t,s)=\U_{P,\alpha}^*(t_f-s,t_f-t)$ which is a mild evalution operator.  
Therefore, for $\bd{P}_{t_f}(t)=\bd{P}(t_f-t)$, the Riccati equation \eqref{ORiccatia} 
can be rewritten as  \eqref{ORiccatiD}. 
By theorem \ref{Eval-Y-ineq}, for some $\delta_Y>0$ such that
$$\Vert\Ykc(t,s)\Vert_{\mathcal{H}}\leq \delta_Y;$$
thus, by definition of $\Ykc(t,s)$, 
\eqref{Eval-U-ineq} is preserved.

\end{proof}

\begin{lem}
\label{Pre-stab}
For $\bd{D}_0(t)\in\mathbb{L}(\mathcal{H},\mathcal{H})$  continuous in time define $\bd{U}_{\bd{D}_0}(t,s)$ to be the evolution operator generated by $\bd{A}+\bd{D}_0(t)$. Furthermore, for $\beta_0>0, $ define $\bd{U}_{\bd{D}_0,\beta_0}(t,s)$ to be  the evolution operator generated by $\bd{A}+\bd{D}_0(t) +\beta_0 \bd{I} .$  Then
\begin{equation}
\label{Pre-stab-eq}
\bd{U}_{\bd{D}_0,\beta_0}(t,s)=\exp(\beta_0(t-s))\bd{U}_{\bd{D}_0}(t,s) . 
\end{equation}
\end{lem}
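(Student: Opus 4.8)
The strategy is to exploit the \emph{uniqueness} clause of Theorem~\ref{thm mild evol op}: I will show that the operator defined by the right-hand side of \eqref{Pre-stab-eq} satisfies the integral equation \eqref{mild evol op} that characterises the evolution operator generated by $\bd{A}+\bd{D}_0(t)+\beta_0\bd{I}$, and then uniqueness forces it to coincide with $\bd{U}_{\bd{D}_0,\beta_0}(t,s)$. No compactness or fixed-point machinery is needed; this is essentially a rescaling identity.

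First I would record the standard semigroup rescaling fact: if $\bd{A}$ generates the $C_0$-semigroup $\bd{T}(t)$, then $\bd{A}+\beta_0\bd{I}$ generates $\bd{T}_{\beta_0}(t):=e^{\beta_0 t}\bd{T}(t)$ (see \cite{pazy}). Since $\bd{D}_0(\cdot)$ is bounded and continuous in time, Theorem~\ref{thm mild evol op} applies both with base generator $\bd{A}$ and with base generator $\bd{A}+\beta_0\bd{I}$, and the bounded perturbation $\bd{D}_0(t)$ in either case; thus $\bd{U}_{\bd{D}_0}$ and $\bd{U}_{\bd{D}_0,\beta_0}$ are the \emph{unique} evolution operators satisfying \eqref{mild evol op} with semigroup $\bd{T}$ and $\bd{T}_{\beta_0}$, respectively, and perturbation $\bd{D}_0$ (in the sign convention of \eqref{add1}).

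Next I would set $\bd{V}(t,s):=e^{\beta_0(t-s)}\bd{U}_{\bd{D}_0}(t,s)$. The three evolution-operator axioms are inherited immediately: $\bd{V}(t,t)=\bd{U}_{\bd{D}_0}(t,t)=\bd{I}$; the cocycle property follows from $e^{\beta_0(t-r)}e^{\beta_0(r-s)}=e^{\beta_0(t-s)}$ together with that of $\bd{U}_{\bd{D}_0}$; and strong continuity in each variable is preserved since $(t,s)\mapsto e^{\beta_0(t-s)}$ is scalar and continuous. To check the integral equation, I would multiply the equation \eqref{mild evol op} satisfied by $\bd{U}_{\bd{D}_0}$ through by $e^{\beta_0(t-s)}$ and, inside the convolution integral, split the factor as $e^{\beta_0(t-s)}=e^{\beta_0(t-r)}e^{\beta_0(r-s)}$; using $e^{\beta_0\tau}\bd{T}(\tau)=\bd{T}_{\beta_0}(\tau)$ this rearranges exactly into \eqref{mild evol op} with $\bd{T}$ replaced by $\bd{T}_{\beta_0}$ and $\bd{U}_{\bd{D}_0}$ replaced by $\bd{V}$. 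Hence $\bd{V}$ and $\bd{U}_{\bd{D}_0,\beta_0}$ solve the same characterising equation, so by the uniqueness part of Theorem~\ref{thm mild evol op}, $\bd{V}(t,s)=\bd{U}_{\bd{D}_0,\beta_0}(t,s)$, which is precisely \eqref{Pre-stab-eq}. The only place requiring any care is this distribution of the exponential factor across the convolution, so that the semigroup appearing in the integrand becomes $\bd{T}_{\beta_0}(t-r)$ rather than $\bd{T}(t-r)$; everything else is formal.
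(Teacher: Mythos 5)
Your proposal is correct and follows essentially the same route as the paper: both rely on the semigroup rescaling $\bd{T}_{\beta_0}(\tau)=e^{\beta_0\tau}\bd{T}(\tau)$, split the exponential factor across the convolution in the defining integral equation, and invoke uniqueness of the evolution operator to identify the rescaled operator — the paper simply runs the computation in the opposite direction, multiplying the equation for $\bd{U}_{\bd{D}_0,\beta_0}$ by $e^{-\beta_0(t-s)}$ to recover the equation for $\bd{U}_{\bd{D}_0}$, whereas you rescale $\bd{U}_{\bd{D}_0}$ upward.
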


\begin{proof}
Note that the evolution operator $\bd{U}_{\bd{D}_0,\beta_0}(t,s)$ satisfies
\begin{equation}
\label{Pre-stab-1}
\begin{aligned}
\bd{U}_{\bd{D}_0,\beta_0}(t,s)&=\bd{T}_{\beta_0}(t-s)\\
&+\int_s^t\bd{T}_{\beta_0}(t-r)\bd{D}_0(r)\bd{U}_{\bd{D}_0,\beta_0}(r,s)dr
\end{aligned}
\end{equation}
where $\bd{T}_{\beta_0}(\cdot)$ is the semigroup generated by $\bd{A}+\beta_0\bd{I}$.
Also, from semigroup properties, it can be concluded that 
$$\bd{T}_{\beta_0}(t-s)=\exp(\beta_0(t-s))\bd{T}(t-s);$$
substituting this equality back in \eqref{Pre-stab-1} and multiplying both sides by $\exp(-\beta_0(t-s))$ result in
\begin{equation}
\label{pre-stab-2}
\begin{aligned}
&\exp(-\beta_0(t-s))\bd{U}_{\bd{D}_0,\beta_0}(t,s)=\bd{T}(t-s)\\
&+\int_s^t\bd{T}(t-r)\bd{D}_0(r)\exp(-\beta_0(r-s))\bd{U}_{\bd{D}_0,\beta_0}(r,s)dr.
\end{aligned}
\end{equation}
From definition of the evolution operator $\bd{U}_{\bd{D}_0}(t,s)$ and \eqref{pre-stab-2} it is concluded that $\bd{U}_{\bd{D}_0}(t,s)=\exp(-\beta_0(t-s))\bd{U}_{\bd{D}_0,\beta_0}(t,s)$ and the proof is complete. 
\end{proof}

\section{Error  dynamics} 

In this section, the convergence of the  estimated state $\hat{\bd z}_P (t)$ to the true state   $\bd z(t)$, with a filter $\filt (t)$ defined by    (\ref{add1a}), (\ref{add2a}), and (\ref{ORiccatia}) as $t_f \to \infty $  is  shown to hold under some additional assumptions, if the initial error is sufficiently small.

Define 
the error $\bd{e}(t)=\bd{z}(t)-\zh_P(t)$ between the system state $\bd z(t) $ and the observer state $\zh_P(t)$ and
for $\bd{z},\hat{\bd{z}}_P\in\mathcal{H}$, 
\begin{equation}
\label{phi}
\bd{\phi}( \bd e, t )=\bd{F}(\bd{z},t)-\bd F(\bd z-\bd e,t)-\bd DF(\bd z-\bd e,t)(\bd e).
\end{equation}
 From definition of the system  \eqref{Sys1} and the observer \eqref{OBD} with $\filt (t)=\bd{P}(t)\bd{C}^*\bd{R}^{-1}(t)$, 
the differential equation governing the  error dynamics is, 
\begin{equation}
\label{error-dyn}
\begin{aligned}
\frac{\partial\bd{e}(t)}{\partial t}=\bd{A}\bd{e}(t)-&\filt (t) \bd{C}\bd{e}(t)+\bd{DF}(\bd z(t)-\bd e(t),t)\bd{e}(t) \\
&+\bd{\phi}(\bd e(t))- \filt (t) \bd \eta (t) - \bd{G}\bd{w}(t).
\end{aligned}
\end{equation}
Let  $\U_{P}(t,s)$ be the evolution operator generated by $\bd{A}+\bd{DF}(\zhP(t),t)-\filt (t)\bd{C}$ where $\filt (t)=\bd{P}(t)C^*\bd{R}^{-1}(t)$ and $\bd{P}(t)$ solves \eqref{add1a}-\eqref{ORiccatia}.
The  mild solution for the error dynamics is, with initial condition $\bd e(0)=\bd e_0,$
\begin{equation}
\label{error-dyn-evl}
\begin{aligned}
\bd{e}(t)=\bd{U}_{P}(t,0)\bd{e}_0
&+\int_{0}^t\bd{U}_{P}(t,r)\bd{\phi}(\bd e(r))dr\\
&-\int_{0}^t \bd{U}_{P}(t,r)(- \bd \filt (r) \bd \eta (r) - \bd{G}\bd{w}(r))dr
\end{aligned}
\end{equation}
The right-hand side of  \eqref{error-dyn-evl} defines a nonlinear mapping 
\begin{align*}
\Phi_e(t,0, \bd e_0 )&:\mathbb{L}^2([0,t_f];\mathcal{H})\rightarrow\mathbb{L}^2([0,t_f];\mathcal{H}). 
\end{align*}
Well-posedness of the differential equations \eqref{Sys1} and \eqref{OBD} for $z(t)$ and $\hat z_P(t) $ implies that  for every initial condition $\bd{e}(0) \in\mathcal{H}$ the above equation has a unique solution in $\mathcal{C}([0,t_f],\mathcal{H})$ for $\bd  e.$ Thus, for all $\bd e(0) \in\mathcal{H}, $ the operator $\Phi $ has a unique fixed point $\bd e \in \mathbb{L}^2([0,t_f];\mathcal{H}. $

It will be shown that under stabilizability and detectability assumptions, and in the absence of disturbances, the estimation error, defined by \eqref{error-dyn} with $\bd{L}(t)=\bd{P}(t)\bd{C}^*\bd{R}^{-1}(t)$ where $\bd{P}(t)$ satisfies the Riccati equation \eqref{ORiccatia},  converges to zero if the initial error is sufficiently small.
A second-order smoothness assumption on  the  nonlinear term $\bd{F}(.)$ is needed.

\begin{thm}
\label{Conv}
Let the system $(\bd{A}_d(t),\bd{C})$, where $\bd{A}_d(t)=\bd{A}+\alpha\bd{I}+\bd{DF}(\zh_P(t),t)$, be uniformly detectable, and $(\bd{A}_d(t),\bd{W}^{1/2}(t))$ be uniformly stabilizable. 
Assume that there exist  time $T>0$ and positive numbers $m>1$, $\epsilon_{\varphi}>0$ and $\delta_{\varphi}>0$ such that if $\Vert \bd e \Vert_{\mathcal{H}}\leq \epsilon_{\phi}, $ then defining  $\bd \phi(\cdot)$  as in  \eqref{phi},
\begin{equation}
\Vert \bd{\phi}(\bd e, t) \Vert_{\mathcal{H}}\leq \delta_{\phi} \Vert \bd e \Vert_{\mathcal{H}}^m , \quad 0 \leq t \leq T .
\label{bdd-phi}
\end{equation}

In the absence of disturbances, $\bd{w}=\bd \eta \equiv 0$,   there exists  $\epsilon, $ $\delta_{e,0}>0$, $M_e>0$, such that if $\Vert e(0)\Vert = \Vert\bd{z}(0)-\hat{\bd{z}}_P(0)\Vert_{\mathcal{H}}< \epsilon$  then for $t\geq 0 , $
$$\Vert\bd{z}(t)-\hat{\bd{z}}_P(t)\Vert_{\mathcal{H}}\leq M_e\exp(-\alpha_e(t-0))\Vert\bd{z}(0)-\hat{\bd{z}}_P(0)\Vert_{\mathcal{H}}.$$

\end{thm}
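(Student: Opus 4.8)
The plan is to treat the disturbance-free error equation as an exponentially stable linear evolution subjected to a higher-order perturbation. Setting $\bd{w}=\bd{\eta}\equiv 0$ in the mild form \eqref{error-dyn-evl} leaves
\[
\bd{e}(t)=\bd{U}_{P}(t,0)\bd{e}_0+\int_0^t\bd{U}_{P}(t,r)\,\bd{\phi}(\bd{e}(r),r)\,dr ,
\]
where $\bd{U}_P(t,s)$ is the evolution operator generated by $\bd{A}+\bd{DF}(\zh_P(t),t)-\filt(t)\bd{C}$ and, by the well-posedness already established, $\bd{e}\in\mathcal{C}([0,t_f],\mathcal{H})$ is the unique mild solution. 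I would proceed in three steps: (i) prove that $\bd{U}_P$ decays exponentially with rate $\alpha$; (ii) combine this with the higher-order bound \eqref{bdd-phi} on $\bd{\phi}$ in a Gr\"onwall estimate that is closed by a continuity (bootstrap) argument; (iii) extract the smallness threshold for $\bd{e}_0$ and the constant $M_e$.

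For step (i), observe that the hypotheses of the present theorem are exactly those of Theorem~\ref{EXp_stb_evl}, so the shifted evolution operator $\bd{U}_{P,\alpha}(t,s)$ generated by $\bd{A}_d(t)-\filt(t)\bd{C}=\bd{A}+\alpha\bd{I}+\bd{DF}(\zh_P(t),t)-\filt(t)\bd{C}$ is uniformly bounded, $\|\bd{U}_{P,\alpha}(t,s)\|\le\delta_Y$, with $\delta_Y$ independent of $t_f$ by Theorem~\ref{Eval-Y-ineq}. Applying Lemma~\ref{Pre-stab} with $\bd{D}_0(t)=\bd{DF}(\zh_P(t),t)-\filt(t)\bd{C}$ and $\beta_0=\alpha$ gives $\bd{U}_{P,\alpha}(t,s)=\exp(\alpha(t-s))\,\bd{U}_P(t,s)$, hence
\[
\|\bd{U}_P(t,s)\|\le\delta_Y\exp(-\alpha(t-s)),\qquad 0\le s\le t ,
\]
so that $\bd{U}_P$ is exponentially stable with rate $\alpha$ and constant $\delta_Y\ge 1$ (the latter because $\bd{U}_{P,\alpha}(t,t)=\bd{I}$).

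For step (ii), set $g(t)=e^{\alpha t}\|\bd{e}(t)\|_{\mathcal{H}}$. On any sub-interval of $[0,t_f]$ on which $\|\bd{e}(r)\|_{\mathcal{H}}\le\epsilon_\phi$, taking norms in the mild equation and inserting the bound of step (i) and \eqref{bdd-phi} yields
\[
g(t)\le\delta_Y\|\bd{e}_0\|_{\mathcal{H}}+\delta_Y\delta_\phi\int_0^t e^{-(m-1)\alpha r}\,g(r)^m\,dr .
\]
Because $m>1$ the kernel is integrable with $\int_0^{\infty}e^{-(m-1)\alpha r}\,dr=1/((m-1)\alpha)$, and the bootstrap closes: $g(0)=\|\bd{e}_0\|_{\mathcal{H}}<2\delta_Y\|\bd{e}_0\|_{\mathcal{H}}$, and if $t^\ast$ were the first time with $g(t^\ast)=2\delta_Y\|\bd{e}_0\|_{\mathcal{H}}$ then on $[0,t^\ast]$ one has $\|\bd{e}(r)\|_{\mathcal{H}}\le g(r)\le 2\delta_Y\|\bd{e}_0\|_{\mathcal{H}}\le\epsilon_\phi$ as soon as $2\delta_Y\epsilon\le\epsilon_\phi$, so the displayed inequality applies on $[0,t^\ast]$ and forces
\[
g(t^\ast)\le\delta_Y\|\bd{e}_0\|_{\mathcal{H}}+\frac{\delta_Y\delta_\phi\,(2\delta_Y\|\bd{e}_0\|_{\mathcal{H}})^m}{(m-1)\alpha}<2\delta_Y\|\bd{e}_0\|_{\mathcal{H}} ,
\]
the strict inequality holding whenever $\|\bd{e}_0\|_{\mathcal{H}}<\epsilon$ with $\epsilon\le\bigl((m-1)\alpha/(\delta_\phi(2\delta_Y)^m)\bigr)^{1/(m-1)}$; this contradiction shows $g(t)\le 2\delta_Y\|\bd{e}_0\|_{\mathcal{H}}$ for all $t\in[0,t_f]$, i.e. $\|\bd{z}(t)-\zh_P(t)\|_{\mathcal{H}}\le 2\delta_Y\,e^{-\alpha t}\|\bd{z}(0)-\zh_P(0)\|_{\mathcal{H}}$. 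This is the assertion with $M_e=2\delta_Y$, $\alpha_e=\alpha$, and $\delta_{e,0}=\epsilon=\min\{\epsilon_\phi/(2\delta_Y),\bigl((m-1)\alpha/(\delta_\phi(2\delta_Y)^m)\bigr)^{1/(m-1)}\}$. Since all constants are independent of $t_f$, the bound persists under $t_f\to\infty$ together with the convergence of the finite-horizon Riccati data from the preceding theorems; and the use of \eqref{bdd-phi} on the whole interval in play is consistent with Assumption~\ref{asp Frechet}, since Taylor's theorem and the local Lipschitz continuity of $\bd{DF}$ already furnish such a bound with $m=2$.

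The step I expect to be the main obstacle is (i): because $\bd{A}_d(t)$ contains $\bd{DF}(\zh_P(t),t)$ it depends on the observer state, so exponential stability of the Riccati-perturbed evolution operator is not furnished by the classical linear theory; it has to come from the duality with the linear--quadratic problem (Theorem~\ref{EXp_stb_evl}, which in turn relies on Theorems~\ref{thm-stab-control} and \ref{Eval-Y-ineq}), after which the shift identity of Lemma~\ref{Pre-stab} converts uniform boundedness of $\bd{U}_{P,\alpha}$ into exponential decay of $\bd{U}_P$. Granting that, step (ii) is routine, its only substantive feature being that the correction term $\bd{\phi}$ is genuinely of order $m>1$, so that its contribution is governed by the single convergent integral $\int_0^{\infty}e^{-(m-1)\alpha r}\,dr$ and can be absorbed by choosing $\bd{e}_0$ small.
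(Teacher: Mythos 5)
Your argument is correct, and its first half is exactly the paper's: exponential decay of $\bd{U}_P(t,s)$ with rate $\alpha$ is obtained from the uniform bound on $\bd{U}_{P,\alpha}$ in Theorem \ref{EXp_stb_evl} (which rests on Theorems \ref{thm-stab-control} and \ref{Eval-Y-ineq}) together with the shift identity of Lemma \ref{Pre-stab}, precisely as in \eqref{Eval-Y-alp-ineq}. Where you diverge is in how the higher-order term $\bd{\phi}$ is absorbed. The paper argues in two stages: it first shows that for suitable $\epsilon,\epsilon_S$ the map $\Phi_e$ sends the ball $\mathcal{S}=\{\|v\|_{\mathbb{L}^\infty}\leq\epsilon_S\}$ into itself, concluding that $\|\bd{e}(t)\|\leq\epsilon_S$ with $\epsilon_S^{m-1}<\alpha/(\delta_Y\delta_\phi)$, and then applies a \emph{linear} Gr\"onwall inequality to $\tilde{\bd{e}}(t)=e^{\alpha t}\bd{e}(t)$ using the crude bound $\|\bd{e}(r)\|^{m-1}\leq\epsilon_S^{m-1}$, which yields decay at the reduced rate $\alpha\bigl(1-\epsilon_S^{m-1}\delta_Y\delta_\phi/\alpha\bigr)$ with constant $\delta_Y$, as in \eqref{cont}. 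You instead keep the nonlinear integral inequality for $g(t)=e^{\alpha t}\|\bd{e}(t)\|$ and close it by a continuity (first-exit-time) argument, exploiting that the kernel $e^{-(m-1)\alpha r}$ is integrable because $m>1$; this buys decay at the full rate $\alpha$ with constant $2\delta_Y$, and it sidesteps the paper's invariant-ball step, whose conclusion that the actual solution lies in $\mathcal{S}$ is itself essentially a bootstrap in disguise. The paper's version, in turn, makes explicit how the guaranteed rate degrades with the admissible error size $\epsilon_S$, which your sharper bound hides. One caveat applies equally to both proofs: hypothesis \eqref{bdd-phi} is stated only for $0\leq t\leq T$ while the conclusion is asserted for all $t\geq 0$; like the paper, you work on $[0,t_f]$ with $t_f$-independent constants and do not address this restriction, so it is not a gap relative to the paper's own argument, but it is worth flagging (the intended reading is that \eqref{bdd-phi} holds uniformly in $t$, as your closing remark about Assumption \ref{asp Frechet} and Taylor's theorem with $m=2$ in fact justifies).
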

\begin{proof}
 Define, for $0<t_f \leq T, $ and $\epsilon_S >0,$ the subset  of  $ \mathbb{L}^2([0,t_f);\mathcal{H})$
 $$\mathcal{S}=\lbrace v(t)\in \mathbb{L}^\infty([0,t_f);\mathcal{H}) :\|v(t)\|_{\mathbb{L}^\infty([0,t_f);\mathcal{H})}\leq  \epsilon_S  )\rbrace .$$
It will first be shown that  for suitable choices of $\epsilon$ and $\epsilon_S $  $\Phi_e(t,0,e(0))$ maps $S$ to itself if $\Vert\bd{e(0)}\Vert_{\mathcal{H}}\leq \epsilon .$
By Lemma \ref{Pre-stab} and Theorem \ref{EXp_stb_evl}, \eqref{Eval-U-ineq}, 
 the evolution operator $\bd{U}_{P}(t,s)$ generated by $\bd{A}+\bd{DF}(\zh_P(t),t)-\bd{L}(t)\bd{C}$ satisfies
\begin{equation}
\label{Eval-Y-alp-ineq}
\Vert\bd{U}_{P}(t,s)\Vert\leq\delta_Y\exp\big(- \alpha (t-s)\big) . 
\end{equation}
From \eqref{error-dyn-evl}, if   $\|\bd e_0 \| \leq \epsilon$ and $e\in\mathcal{S},$
\begin{equation}
\label{error-dyn-ine}
\begin{aligned}
\Vert \left( \Phi_e(t,0,\bd e_0 ) \bd \right) (t) \Vert_{\mathcal{H}}&\leq \Vert\bd{U}_{P}(t,0)\Vert\Vert\bd{e}(0)\Vert_{\mathcal{H}}
+\int_{0}^t\Vert\bd{U}_{P}(t,r)\Vert\Vert\bd{\phi}(\bd{z}(r),\zh_P(r))\Vert_{\mathcal{H}} dr\\
& \leq \delta_Y \exp (-\alpha t)\|   \bd{e}(0) \|_{\mathcal{H}} +\int_0^t  \delta_Y  \delta_\phi \exp (-\alpha (t-r) )  \|  \bd{e}(r) \|_{\mathcal{H}}^{m} dr \\
&\leq  ( \delta_Y \epsilon-\frac{ \delta_Y  \delta_\phi }{\alpha}\epsilon_S^m )  \exp (-\alpha t) +\frac{ \delta_Y  \delta_\phi }{\alpha} \epsilon_S^m.
\end{aligned}
\end{equation}
Choose $\epsilon >0$ and $\epsilon_S>0$ so that
\begin{align*}
\epsilon_S^{m-1} < \frac{\alpha}{\delta_Y \delta_\phi}, \\
 \epsilon < \frac{1}{\delta_Y} \epsilon_S .
\end{align*}
Then, for all $0 \leq t \leq t_f , $
$\Phi_e(t,0,\bd e_0 )$ maps $ \mathcal S$ to itself. Thus  for all initial errors $\| \bd e(0 ) \| < \epsilon ,$ if
$$\epsilon <  \frac{1}{\delta_Y}\left( \frac{\alpha}{\delta_Y \delta_\phi} \right)^{\frac{1}{m-1} },$$
the error $\|\bd e (t) \|  $ is uniformly bounded by some number $\epsilon_S< \left( \frac{\alpha}{\delta_Y \delta_\phi} \right)^{\frac{1}{m-1}}$ for all $t.$

It will now be shown that if $\| \bd{e}(0)\| < \epsilon$, the estimation error  decays exponentially.  
Define $\tilde{\bd{e}}(t) = e^{\alpha t } \bd e (t) . $ From \eqref{error-dyn-evl} and \eqref{Eval-Y-alp-ineq},
\begin{equation*}
\begin{aligned}
\| \tilde{\bd{e} }(t)\|_{\mathcal{H}} &\leq  \delta_Y\| \tilde{\bd{e}}(0) \|_{\mathcal{H}} +\int_0^{t}  \delta_Y  \delta_\phi \| \bd{e}(r)\|_{\mathcal H}^{m-1}  \| \tilde{ \bd{e}}(r) \|_{\mathcal{H}} dr  .
\end{aligned}
\end{equation*}
By Gronwall's Inequality,
$$\| \tilde{\bd{e}} (t)\|_{\mathcal{H}} \leq \delta_Y \| \tilde{ \bd{e}} (0) \|_{\mathcal{H}}\exp ( \delta_Y \delta_\phi \epsilon_S^{m-1}  t) $$
and so
\begin{equation}
\label{cont}
\begin{aligned}
\|\bd e(t) \|_{\mathcal{H}} &  \leq \delta_Y \| \bd{e} (0) \|_{\mathcal{H}}\exp\left(\alpha \left(\frac{ \epsilon_S^{m-1} \delta_Y \delta_\phi }{\alpha} -1  \right)  t\right).
\end{aligned}
\end{equation} 
Since $\epsilon_S^{m-1} < \frac{\alpha}{\delta_Y \delta_\phi},$ the error decays exponentially.
\end{proof}
Thus, in the absence of disturbances, there is exponential convergence  of the estimation error to zero, if the initial error is small enough. As expected, with smaller initial error $\epsilon$ the exponential decay  improves. 

If disturbances are present,  the estimation error is bounded.
\begin{cor}
\label{ConvC}
Consider the same assumptions as in Theorem \ref{Conv} except that there are disturbances
$\bd w(t), \bd \eta(t) $ satisfying  for some $\delta_d >0 $ $\|\bd w(t)\|,\|\bd \eta(t)\|\leq\delta_d .$ There exists $\delta_e>0$ such that if $\|\bd{e}(0)\|\leq\delta_e$ and, the estimation error is bounded.
\end{cor}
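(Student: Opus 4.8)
The plan is to mirror the fixed-point argument in the proof of Theorem~\ref{Conv}, carrying along the extra forcing term produced by the disturbances. With $\bd w,\bd\eta$ present the mild solution of the error dynamics is still given by \eqref{error-dyn-evl}, namely
\begin{equation*}
\bd e(t)=\bd U_P(t,0)\bd e_0+\int_0^t\bd U_P(t,r)\bd\phi(\bd e(r),r)\,dr-\int_0^t\bd U_P(t,r)\big(-\filt(r)\bd\eta(r)-\bd G\bd w(r)\big)\,dr .
\end{equation*}
By Theorem~\ref{EXp_stb_evl} and Lemma~\ref{Pre-stab} the evolution operator $\bd U_P(t,s)$ generated by $\bd A+\bd{DF}(\hat{\bd z}_P(t),t)-\filt(t)\bd C$ obeys $\Vert\bd U_P(t,s)\Vert\leq\delta_Y\exp(-\alpha(t-s))$, exactly as in \eqref{Eval-Y-alp-ineq}. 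Moreover, Theorem~\ref{Eval-Y-ineq} bounds $\bd P(t)$ independently of $t_f$, so $\Vert\filt(t)\Vert=\Vert\bd P(t)\bd C^\ast\bd R^{-1}(t)\Vert\leq\delta_L$ with $\delta_L$ independent of $t_f$; together with $\Vert\bd G\Vert<\infty$ and $\Vert\bd w(t)\Vert,\Vert\bd\eta(t)\Vert\leq\delta_d$ this shows the disturbance forcing is bounded in norm by $c_d\delta_d:=(\delta_L+\Vert\bd G\Vert)\delta_d$, uniformly in time and in $t_f$.

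First I would repeat the self-mapping step of Theorem~\ref{Conv}. Fix $0<t_f\leq T$, pick $\epsilon_S>0$ with $\epsilon_S\leq\epsilon_\phi$, and take $\mathcal S=\{v\in\mathbb{L}^\infty([0,t_f);\mathcal H):\Vert v\Vert_{\mathbb{L}^\infty}\leq\epsilon_S\}$. For $\bd e\in\mathcal S$ and $\Vert\bd e_0\Vert\leq\delta_e$, the bound \eqref{bdd-phi}, the exponential estimate above, and the computation in \eqref{error-dyn-ine} give
\begin{equation*}
\big\Vert\big(\Phi_e(t,0,\bd e_0)\bd e\big)(t)\big\Vert_{\mathcal H}\leq\delta_Y e^{-\alpha t}\delta_e+\frac{\delta_Y\delta_\phi}{\alpha}\epsilon_S^m+\frac{\delta_Y c_d\delta_d}{\alpha}\big(1-e^{-\alpha t}\big)\leq\delta_Y\delta_e+\frac{\delta_Y\delta_\phi}{\alpha}\epsilon_S^m+\frac{\delta_Y c_d\delta_d}{\alpha}.
\end{equation*}
Choose $\epsilon_S$ small enough that $\epsilon_S\leq\epsilon_\phi$ and $\frac{\delta_Y\delta_\phi}{\alpha}\epsilon_S^{m-1}\leq\frac12$, and then $\delta_e,\delta_d$ small enough that $\delta_Y\delta_e+\frac{\delta_Y c_d\delta_d}{\alpha}\leq\frac12\epsilon_S$; with these choices the right-hand side is $\leq\epsilon_S$, so $\Phi_e(t,0,\bd e_0)$ maps $\mathcal S$ into itself. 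Since $\Phi_e$ (or a high enough power of it, exactly as in the well-posedness argument preceding this corollary) is contractive on the closed set $\mathcal S$, its unique fixed point --- which is the error trajectory $\bd e(\cdot)$ --- lies in $\mathcal S$, i.e.\ $\Vert\bd e(t)\Vert\leq\epsilon_S$ for all $t\in[0,t_f)$. Because $\epsilon_S,\delta_e,\delta_d$ do not depend on $t_f$, the same continuation to $t\to\infty$ used in Theorem~\ref{Conv} yields $\Vert\bd z(t)-\hat{\bd z}_P(t)\Vert\leq\epsilon_S$ for all $t\geq0$, which is the asserted boundedness.

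An equivalent route avoiding the contraction bookkeeping is continuation in $t$: the solution $\bd e(t)$ is continuous with $\Vert\bd e(0)\Vert\leq\delta_e<\epsilon_S$, so $t^\ast=\sup\{t:\Vert\bd e(s)\Vert\leq\epsilon_S\ \forall s\leq t\}$ is positive, and on $[0,t^\ast]$ the estimate above forces $\Vert\bd e(t)\Vert\leq\delta_Y\delta_e+\frac{\delta_Y\delta_\phi}{\alpha}\epsilon_S^m+\frac{\delta_Y c_d\delta_d}{\alpha}<\epsilon_S$, so $t^\ast$ cannot be finite and $\Vert\bd e(t)\Vert\leq\epsilon_S$ for all $t$. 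I expect the only real work to be bookkeeping: checking that the smallness requirements on $\epsilon_S$, $\delta_e$ and $\delta_d$ are mutually compatible, and --- the point that makes the $t_f\to\infty$ passage legitimate --- that $\delta_Y$ and $\delta_L$ (hence $c_d$) can be taken independent of the horizon $t_f$, which is precisely what Theorems~\ref{EXp_stb_evl} and~\ref{Eval-Y-ineq} provide.
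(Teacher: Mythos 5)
Your argument is correct, but it is not the route the paper takes. The paper's proof of Corollary \ref{ConvC} is a two-line appeal to input-to-state stability theory: since Theorem \ref{Conv} gives local exponential stability of the undisturbed error dynamics and the perturbation $\bd\phi(\cdot)-\filt(t)\bd\eta(t)-\bd G\bd w(t)$ is uniformly Lipschitz, a cited result of Mironchenko (local exponential stability plus Lipschitz perturbation implies local ISS) immediately yields boundedness of $\bd e(t)$ for small initial error. You instead redo the quantitative estimate of Theorem \ref{Conv} with the disturbance forcing carried along explicitly: the decay bound $\Vert\bd U_P(t,s)\Vert\leq\delta_Y e^{-\alpha(t-s)}$ from Lemma \ref{Pre-stab} and Theorem \ref{EXp_stb_evl}, the uniform bound on $\filt(t)$ via Theorem \ref{Eval-Y-ineq}, the bound \eqref{bdd-phi} on $\bd\phi$, and then either a self-mapping/fixed-point step on the ball $\mathcal S$ or the cleaner continuation argument, which correctly closes since the estimate is strict inside $\mathcal S$ and the solution exists globally by well-posedness. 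Your approach is more elementary and self-contained, produces explicit constants and an explicit error bound $\epsilon_S$, and makes visible a hypothesis the paper leaves implicit: you need $\delta_d$ small as well as $\Vert\bd e(0)\Vert$ small. This is not a defect relative to the paper --- local ISS as invoked there likewise only bounds the error for sufficiently small inputs, and the corollary's statement is garbled on exactly this point --- but it is worth flagging that your proof does not give boundedness for arbitrary disturbance amplitude, whereas the ISS route in the paper additionally packages the conclusion as an estimate of the form $\beta(\Vert\bd e(0)\Vert,t)+\gamma(\delta_d)$, i.e.\ a gain function in the disturbance size rather than a single ball.
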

\begin{proof}
Since the undisturbed system is locally exponentially stable around the equilibrium point $\bd e(t)=0$, it is uniformly asymptotically stable in sense of \cite[Definition 2.24]{mironchenko2020}. Furthermore, the nonlinear part of the error dynamics defined by $\bd \phi(\bd z(t),\hat{\bd z}(t))-\bd K(t)\bd \eta(t)-\bd G\bd w(t)$ is uniformly Lipschitz continuous with respect to its arguments. Therefore, by \cite[Theorem 2.25]{mironchenko2020},  there exist s$\delta_e>0$ such that for $\|\bd e(0)\|_{\mathcal{H}}\leq \delta_e$ 
the disturbed system is locally input-to-state stable, and thus the estimation error $\bd{e}(t)$ is bounded.  
\end{proof}

\section{Example: estimation in a magnetic drug delivery system}
 In the absence of external stimulation, anticancer drugs distribute in the tissue via diffusion and convection.  Drug resistance in cancer treatment is a major complication, often due to an increased interstitial flow pressure that affects the direction of drug delivery. One way of  modulating the direction of drug delivery is by infusing the therapeutic agents in magnetic nanoparticles that are guided towards and within the destination tissue via electromagnetic actuation. Distribution of the nanoparticles is critical but  typically cannot be measured with high precision. However, the average density and the center of the drug distribution are  measurable quantities. An observer will be designed to estimate  nanoparticle  distribution from these measured  outputs.

A simple in-vitro magnetic drug delivery system is considered. In the first simulation,  the system is simplified such that the assumptions of the theorem stated above is satisfied. Next, we consider a more general case and show that even for a less smooth nonlinearity than assumed, the observer leads to converging estimates. 

The distribution $c(r,s,t)$
of the magnetic nanoparticles is manipulated in a fluid environment via magnetic force generated by electromagnets. 
Details of the system can be found in  ~\cite{afshar1} and also \cite{afshar2,afshar3}; only an overview is provided here. 
The electromagnets' currents are represented by $I_1(t)$ and $I_3(t)$ in the $r-$direction and $I_2(t)$ and $I_4(t)$ in the $s-$direction. The currents of the  Helmholtz coils are denoted by $I_5(t)$ in the $r-$direction and by $I_6(t)$ in the $s-$direction.
The system model is  \cite{NPT6}
\begin{equation}
\label{Mg1}
\dot{c}(t)=-\nabla\cdot(-\mathcal{D}\nabla c(t)+\kappa c(t)V_f(t)+\gamma c\nabla(H^T(t)H(t))),
\end{equation}
on the domain of interest $\Omega=[-L_0,L_0]\times[-L_0,L_0]$,
where $\mathcal{D}$ is the diffusion coefficient, $\kappa$ is the advection coefficient, $\gamma$ is a coefficient defined by the magnetic properties and size of the nanoparticles, $H$ is the magnetization vector (which is a linear function of the currents $I_k(t)$),
and $V_f(t)$ is the flow velocity field. 
The equations are solved with homogeneous Neumann boundary conditions.
The magnetization vector is $H(t)=J_cI(t)$ where $I(t)=[I_1,\cdots,I_9]^T$ and 
$J_c$ is a matrix defined by the configuration and magnetic characteristics of the electromagnets and Helmholtz coils (For more details see \cite[Chapter 4]{afshar3}.)
Thus $\nabla (H^T(t)H(t))=Q_cI_t(t)$ where $Q_c$ is a matrix with $ij$-th component  defined by $\nabla(J_{c,i}\cdot J_{c,j})$ where $J_{c,k}$ is the $k$th column of $J_c$ and 
\begin{align*}
I_t(t)=[I_1(t)I_1(t),I_1(t)I_2(t),\ldots,I_1(t)I_6(t),
I_2(t)I_2(t),\ldots,I_2(t)I_6(t),\ldots,I_6(t)I_6(t)]^T.
\end{align*}

The behaviour of the system \eqref{Mg1} is simulated over a square $\Omega$  of size $2$cm $\times$ $2$cm. 
 The diffusion coefficient is $\mathcal{D}=1\times 10^{-8}$ $\text{m}^2/\text{s}, $ $\kappa=2.5\times 10^{-7}$ and $\gamma=6.6\times 10^{-5}$. The particles have radius $500$ nm. 

Measurements of the system~\eqref{Mg1} are defined by 
\begin{equation}
\label{out}
y(t)=\int_{-L_0}^{L_0}\int_{-L_0}^{L_0}\begin{bmatrix}
rc(r,s,t)\\
sc(r,s,t)\\
c(r,s,t)\\
\end{bmatrix}
drds .
\end{equation}
In the first simulation, $V_f(t)=[\int_0^rc^2(x,s,t)dx/c(t),\int_0^sc^2(r,y,t)dy/c(t)]^T/2$. The last term on the right hand side of \eqref{Mg1} is linearized around a fixed distribution $c_e(r,s)=c_r\exp(-(r^2+s^2)/(6.25\times 10^{-5}))$, where $c_r>0$ is a normalization factor defined by
$$c_r =\mathcal{A}(\Omega)/\int_\Omega \exp(-(r^2+s^2)/(6.25\times 10^{-5}))drds$$ 
where $\mathcal{A}(\Omega)$ is the area of the domain $\Omega$.  
 The governing equations now are
\begin{equation}
\label{Mg10}
\begin{aligned}
\dot{c}(t)&=-\nabla\cdot(-\mathcal{D}\nabla c(t)+\gamma c_eQ_cI_t(t))-\kappa c^2(t)+\omega(t) .
\end{aligned}
\end{equation}
We choose the external input to be 
$$I_t(t)=[0_{1\times 9},0.8\sin(20t)/20,0_{1\times 4},16\sin(40t)/40,0_{1\times 30}]^T.$$ 
The initial  concentration is set to be uniform,   $c(\cdot,\cdot,0)\equiv 1,$ 
and the initial condition of the estimator is set at zero. 

The state space  $\mathcal{H}=\mathcal{L}^2(\Omega)$ and the state is  $z(t)=c(t)\in L^2(\Omega). $  Also,
\begin{align*}
Az(t)&=\nabla\cdot(\mathcal{D}\nabla z(t)),\\
F(z(t))&=-\kappa z^2(t),\\
Bu(t)&=\gamma c_e\nabla\cdot Q_c u(t),\quad u(t)=I_t(t).
\end{align*}
 The operator $A$ generates a $C_0$-semigroup. The nonlinear term in \eqref{Mg10}, $F(z(t))$ is Lipchitz continuous and Fr\'echet differentiable but the derivative is not uniformly bounded over the entire state space. However, if the concentration $z$ remains bounded then this function can be replaced by a uniformly bounded function in the analysis.

Both system and observer dynamics are approximated using the finite element method with square elements and piecewise linear basis functions. The order of approximation for the system is $35\times 35$. Four different orders of approximation were used for observer dynamics, $25\times 25$, $18\times 18$, $9\times 9$, and $7\times 7$. The equations are solved in MATLAB 2018. The filtering parameters are chosen to be $\alpha=8$, $W(s)=I_{\mathcal{H}}$, and $R(s)=100I_{\mathcal{Y}}$ where $I_{\mathcal{H}}$ and $I_{\mathcal{Y}}$ are the identity operators on $\mathcal{H}$ and $\mathcal{Y}$ respectively. 
The $L^2(\Omega)$-norm of the state estimation error  and also the Euclidean norm  of the error in the predicted measurement are shown in Figure \ref{F2}.
The errors converge quickly to small values. As expected, the steady-state estimation error increases as the  order of the observer  decreases.

Simulation results in the presence of system and  measurement disturbance are shown in Figures \ref{F3}. 
The disturbance signal $\omega(t)\in\mathbb{R}$ is generated via MATLAB random signal generator such that it has zero mean and covariance of $0.1$. The output disturbance $\eta(t)\in\mathbb{R}^3$ is similarly created with zero mean and a covariance matrix of $\diag(5\times 10^{-3}I_{\mathbb{R}^2},5\times 10^{-4})$.  The state estimation error increases slightly but remains bounded.  

To investigate a less smooth nonlinearity,  the velocity field is defined as a function of distribution $V_f(t)=\mathds{1}_{2\times 1}c(t)$.
The system \eqref{Mg1} is reformulated as 
\begin{equation}
\label{Mg2}
\begin{aligned}
\dot{c}(t)&=-\nabla\cdot(-\mathcal{D}\nabla c(t)+\kappa c^2(t)\mathds{1}_{2\times 1}+\kappa cQ_cI_t(t))+\omega_1(t),\\
\dot{I}_t(t)&=U_t(t)+\omega_2(t) . 
\end{aligned}
\end{equation}
In this new form,  $I_t(t)$ is an augmented state so that the state representation of the system follows the form given by \eqref{Sys1}. Here, $U_t(t)$ is the derivative of current vector and is the input to the system; we chose $U_t=[0_{1\times 9},0.8\cos(20t),0_{1\times 4},16cos(40t),0_{1\times 30}]^T$. 
The state space is now $\mathcal{H}=\mathcal{L}^2(\Omega)\times\mathbb{R}^m$ where $m>0$ is the dimension of vector $I_t$.
The state is  $z(t)=(z_1(t),z_2(t))=(c(t),I_t(t))\in L^2(\Omega_s)\times \mathbb{R}^{m}$. Also,
\begin{align*}
Az(t)&=\begin{bmatrix}
\nabla\cdot(\mathcal{D}\nabla z_1(t))&0\\
0&0
\end{bmatrix} ,
\\
F(z(t))&=-\nabla\cdot(\kappa z_1^2(t)\mathds{1}_{2\times 1}+\gamma z_1(t)Q_cz_2(t)), \\
Bu(t)&=[0,I]^T u(t),\quad u(t)=U_t(t).
\end{align*}
The nonlinear function $F(c,I):H^1(\Omega)\times\mathbb{R}^m\rightarrow \mathcal{H}$ is not Lipschitz continuous over the state space $ \mathcal{H}.$

The initial conditions and other details are the same as in the previous example. The  error in both the state estimation  and  in the measurement are shown in Figure \ref{F3}. Although the assumption of  Theorem  \ref{well-posed_thm_coupling} on the system nonlinearity is not satisfied, the estimate converges to the true state in these simulations for large-order estimator, and remains small for the lower-order estimators. 

In Figures \ref{F4}, the observer performance in presence of system and output disturbance  is shown. The disturbance signal $\omega_1(t)\in\mathbb{R}$ and $\omega_2(t)\in\mathbb{R}^4$ are generated via MATLAB random signal generator such that $\omega(t)=[\omega_1(t),\omega_2^T(t)]^T$ has zero mean and a covariance matrix of $0.1I_{\mathbb{R}^5}$. The output disturbance $\eta(t)\in\mathbb{R}^3$ is similarly created with zero mean and a covariance matrix of $\diag(5\times 10^{-3}I_{\mathbb{R}^2},5\times 10^{-4})$. Note that the generated disturbance signals are piece-wise continuous. The estimation error increases compared with the condition with no disturbances. This error increases significantly  for lower orders of the observer.

\begin{figure}
\begin{tabular}{c c}
\subfloat[]{\includegraphics[width=80mm]{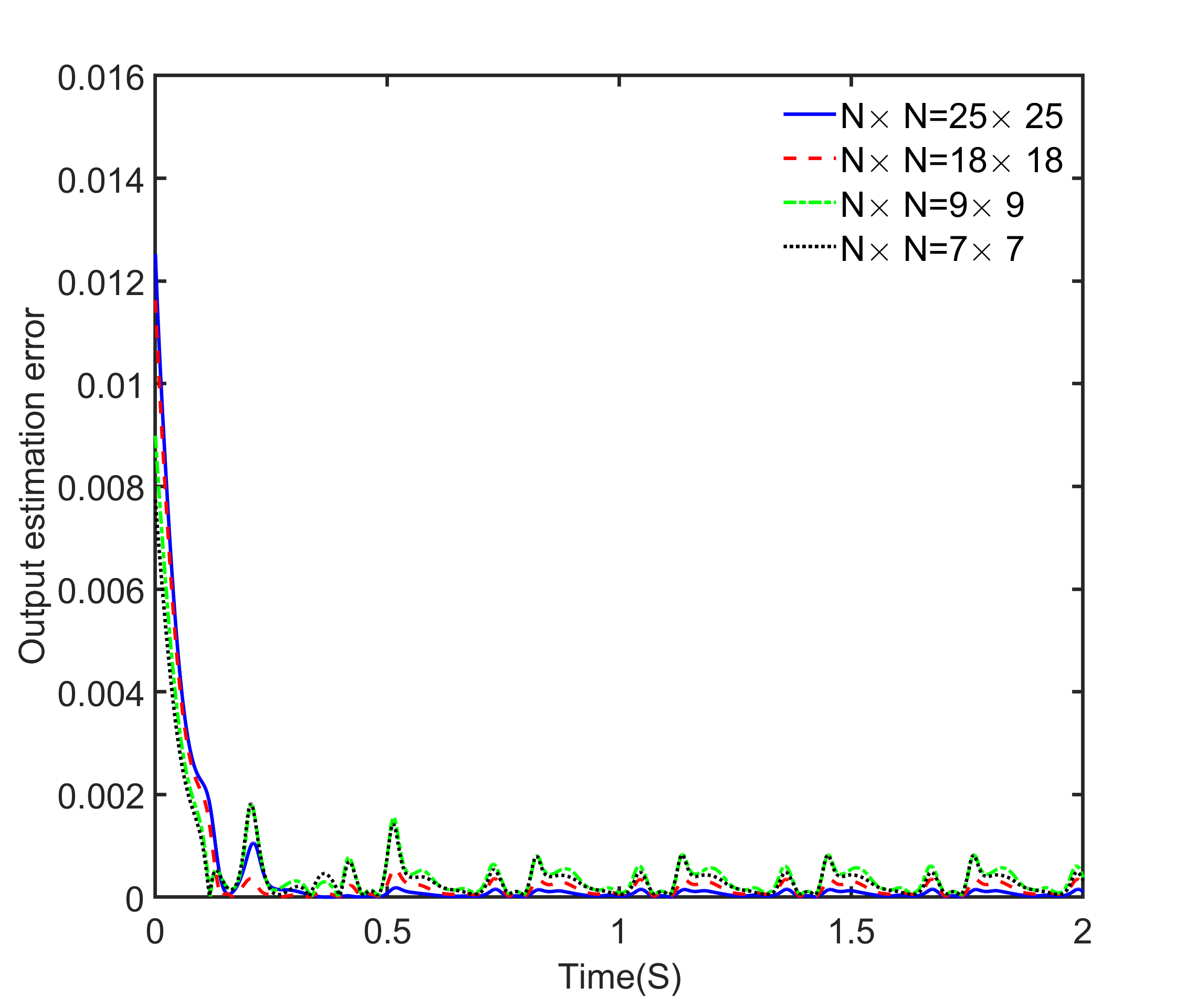}}&
\subfloat[]{\includegraphics[width=80mm]{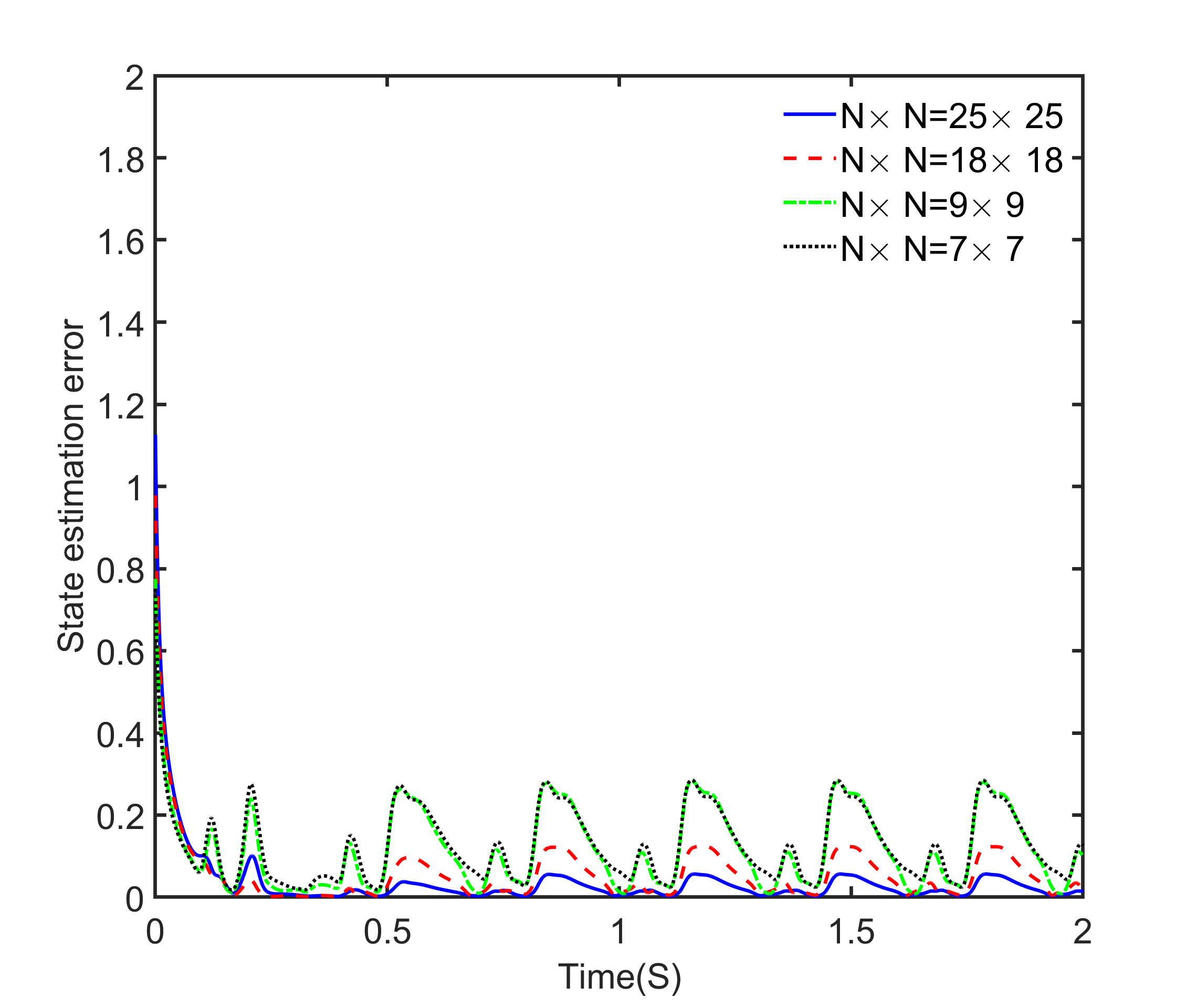}}
\end{tabular}
\caption{Error for different orders of approximation of the  observer dynamics (a) ($L^2(\Omega)$-norm of state estimation error  and (b)  Euclidean norm of output estimation error. $V_f(t)=[\int_0^rc^2(x,s,t)dx/c(t),\int_0^sc^2(r,y,t)dy/c(t)]^T/2$ and $\hat{z}=[0,0]^T$.}
\label{F1}
\end{figure}

\begin{figure}
\begin{tabular}{c c}
\subfloat[]{\includegraphics[width=80mm]{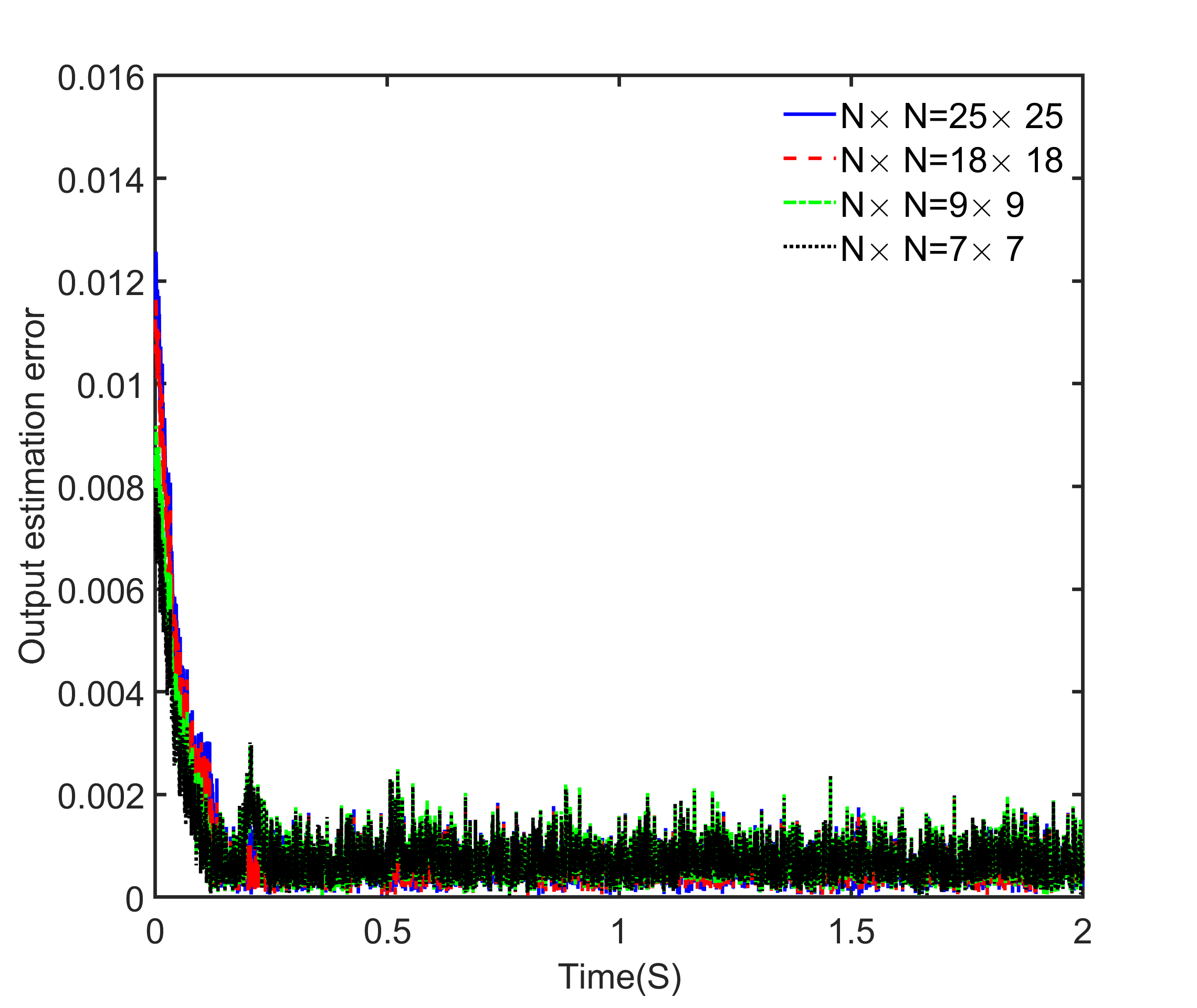}}&
\subfloat[]{\includegraphics[width=80mm]{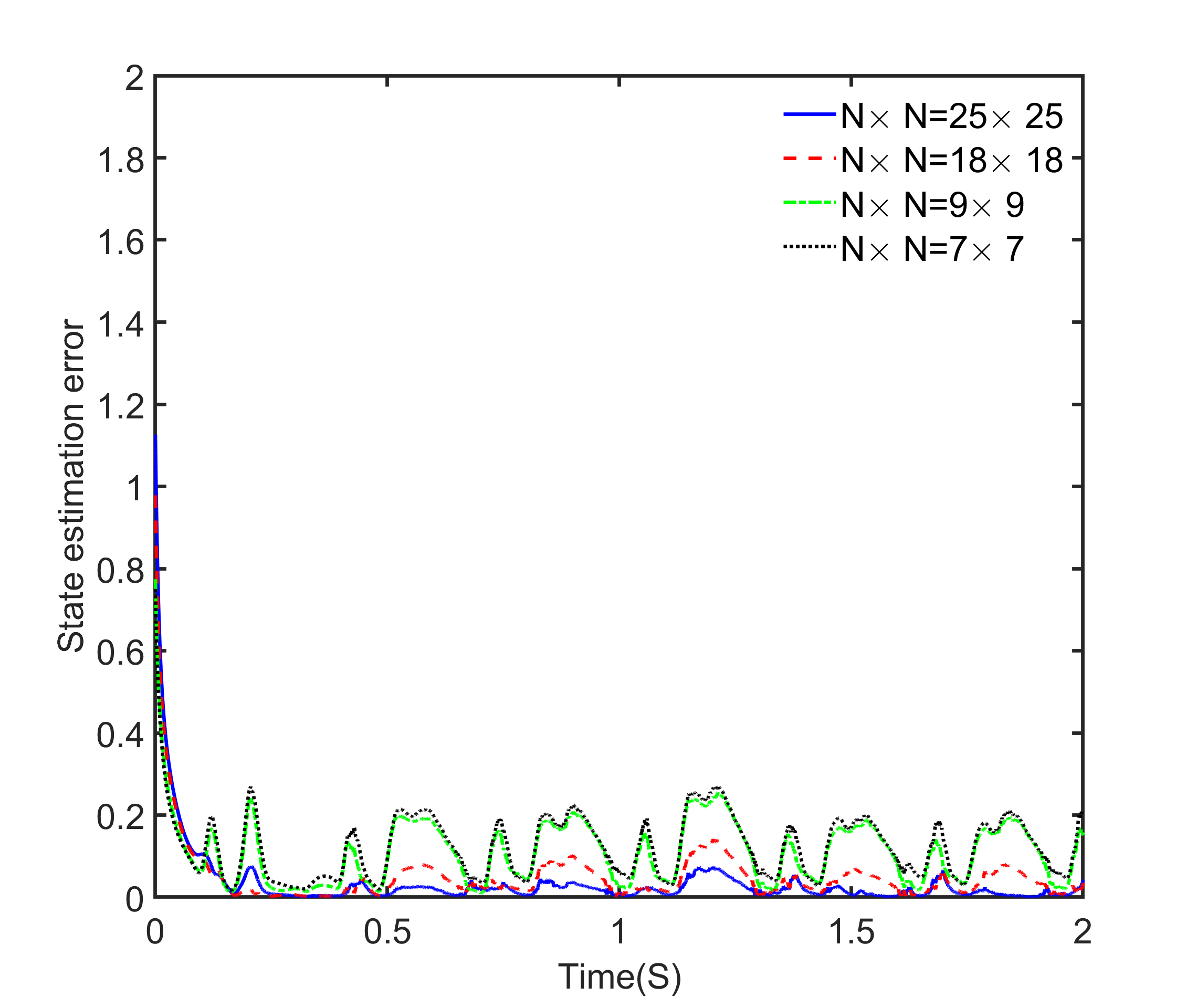}}
\end{tabular}
\caption{Error for different orders of approximation of the  observer dynamics (a) ($L^2(\Omega)$-norm of state estimation error  and (b)  Euclidean norm of output estimation error. $V_f(t)=[\int_0^rc^2(x,s,t)dx/c(t),\int_0^sc^2(r,y,t)dy/c(t)]^T/2$ and $\hat{z}=[0,0]^T$ in presence of system and output disturbance.}
\label{F3}
\end{figure}

\begin{figure}
\begin{tabular}{c c}
\subfloat[]{\includegraphics[width=80mm]{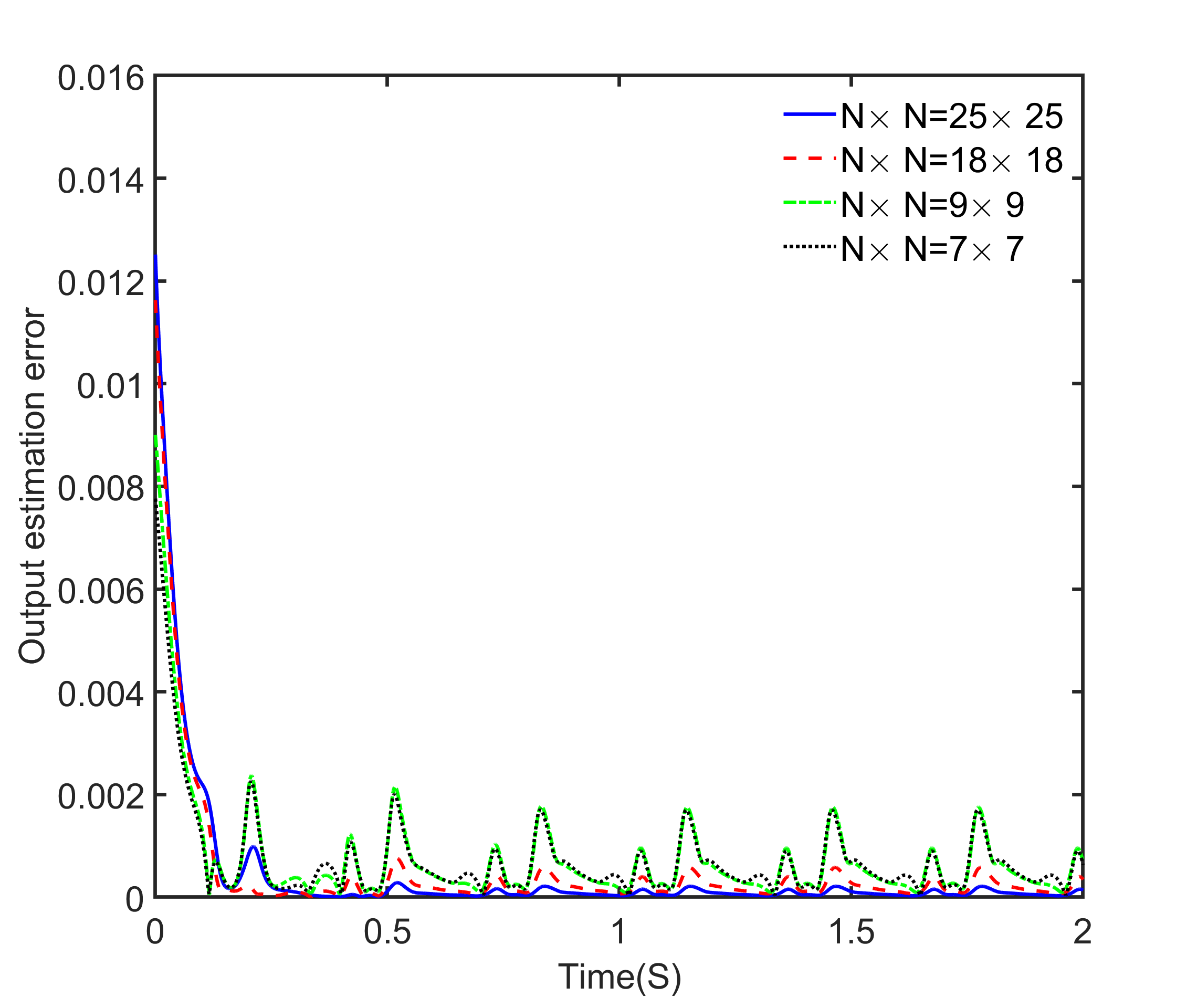}}&
\subfloat[]{\includegraphics[width=80mm]{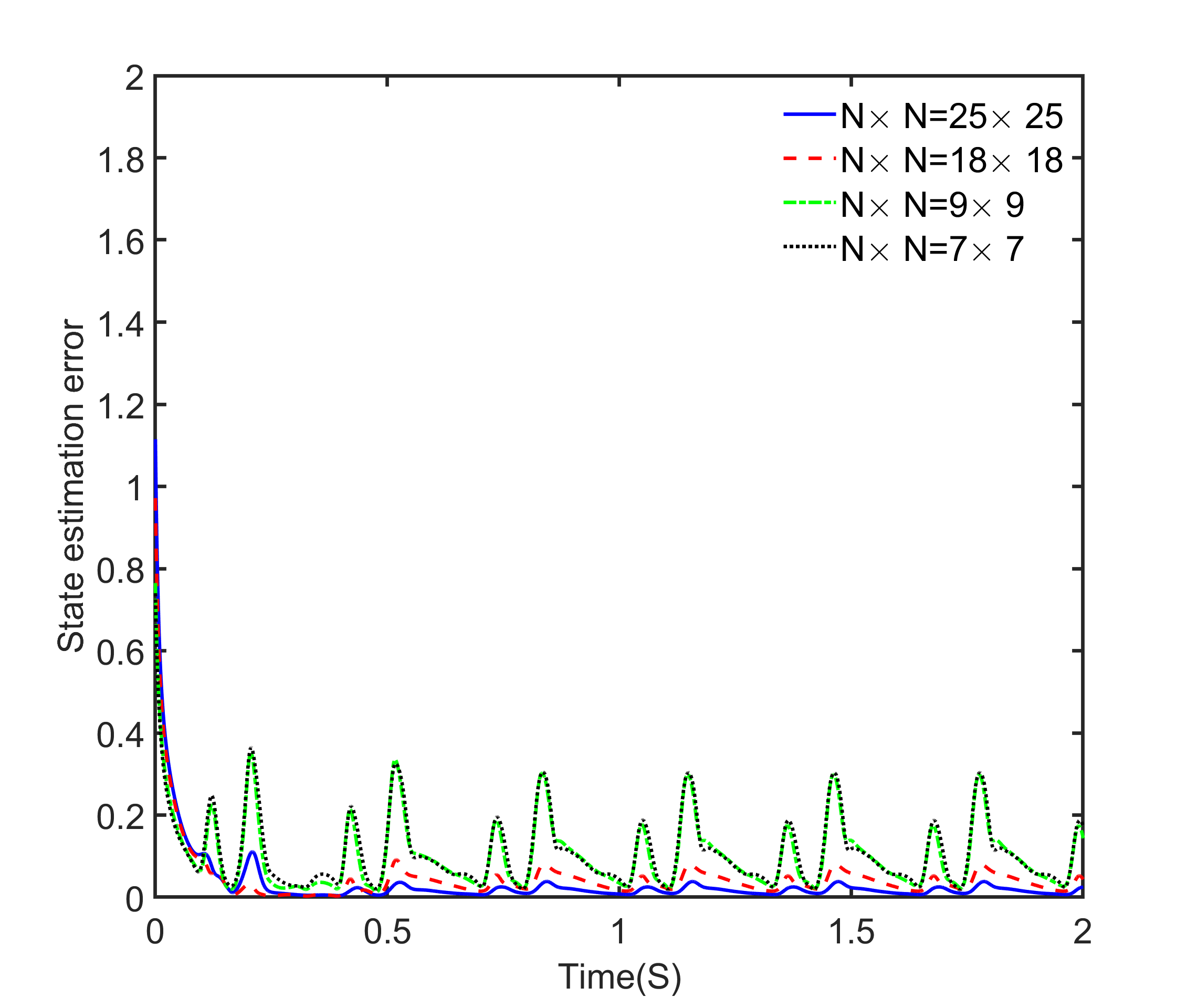}}
\end{tabular}
\caption{Error for different orders of approximation of the  observer dynamics (a) ($L^2(\Omega)$-norm of state estimation error  and (b)  Euclidean norm of output estimation error. $V_f(t)=\mathds{1}_{2\times 1}c(t)$ and $\hat{z}=0$.}
\label{F2}
\end{figure}

\begin{figure}
\begin{tabular}{c c}
\subfloat[]{\includegraphics[width=80mm]{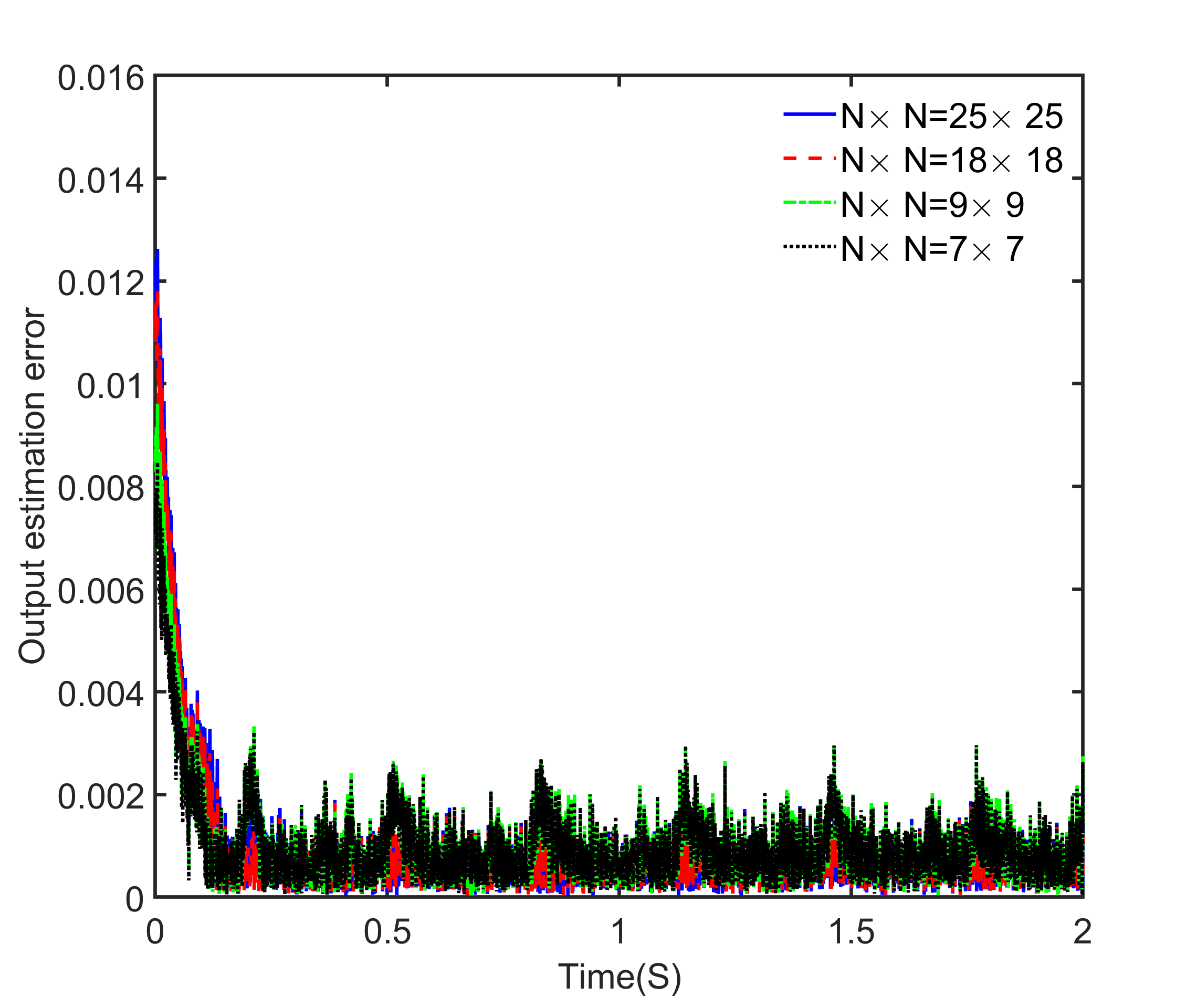}}&
\subfloat[]{\includegraphics[width=80mm]{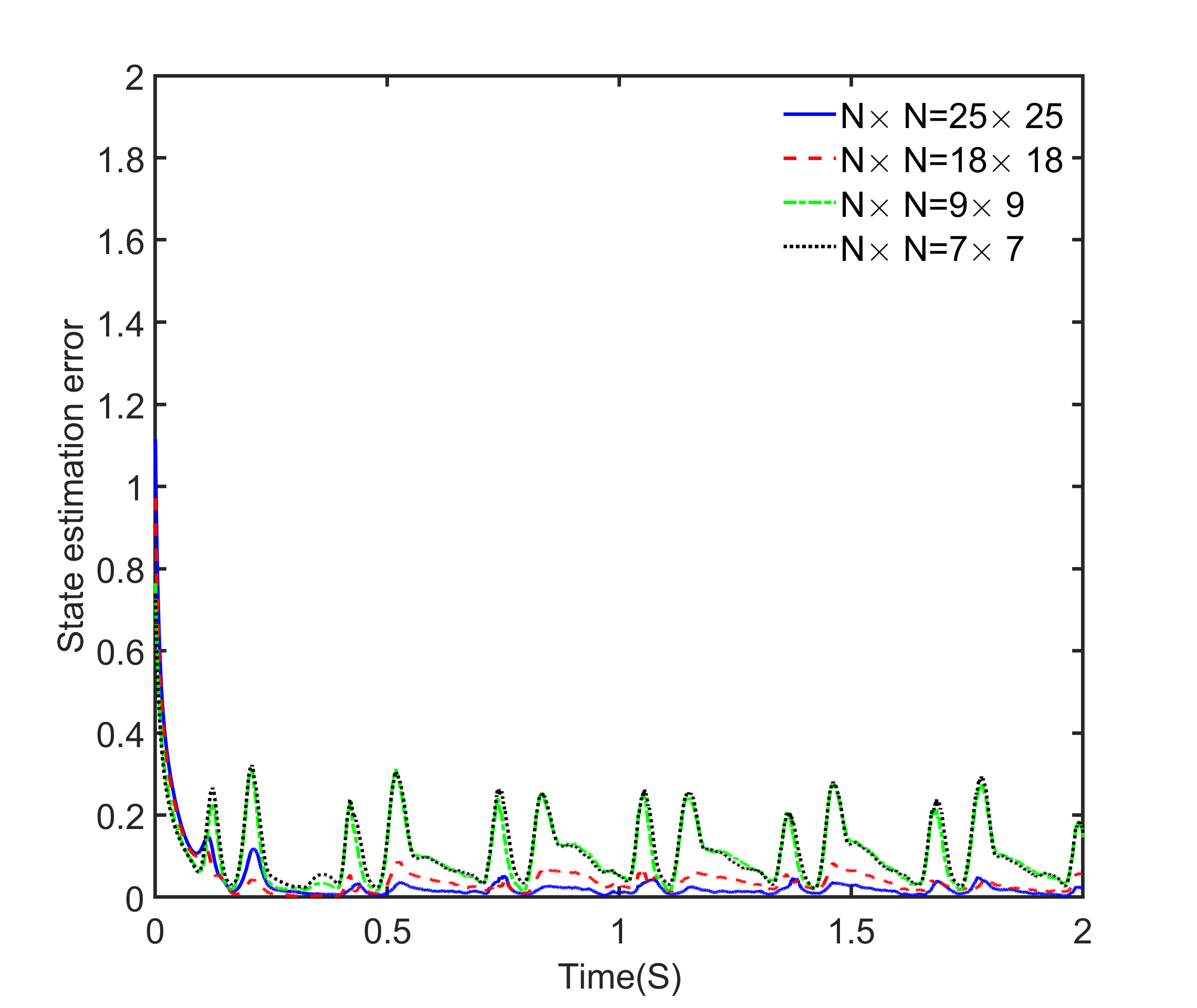}}
\end{tabular}
\caption{Error for different orders of approximation of the  observer dynamics (a) ($L^2(\Omega)$-norm of state estimation error  and (b)  Euclidean norm of output estimation error. $V_f(t)=\mathds{1}_{2\times 1}c(t)$ and $\hat{z}=0$ in presence of system and output  disturbance.}
\label{F4}
\end{figure}

\section{Conclusion}
In this paper,  EKF-based observer design for nonlinear finite-dimensional dynamical systems was  formally extended to a class of semilinear infinite-dimensional systems. It is not assumed that the system is uniformly observable, so this result is also new for finite-dimensional systems. 

The result is illustrated with several  examples.   In the  second example, the nonlinearity is not  Lipschitz continuous, but the error appears to be converging to zero.   This suggests an extension to a wider class of nonlinearities.  Also, since the estimators were of lower order than the approximation used to simulate the system, convergence of finite-dimensional estimators to the infinite-dimensional EKF is suggested.

\bibliographystyle{IEEEtran}
\bibliography{AfsharGermMorrisEKF2022}
\end{document}